\begin{document}

\title{On the Bogolyubov-Ruzsa lemma}

\author{\tsname}
\address{\tsaddress}
\email{\tsemail}

\begin{abstract}
Our main result is that if $A$ is a finite subset of an Abelian group with $|A+A| \leq K|A|$, then $2A-2A$ contains an $O(\log^{O(1)}2K)$-dimensional coset progression $M$ of size at least $\exp(-O(\log^{O(1)}2K))|A|$.
\end{abstract}

\maketitle

\section{Introduction}

In the recent paper \cite{crosis::}, Croot and Sisask introduced a fundamental new method to additive combinatorics and, although they have already given a number of applications, our present purpose is to give another.  Specifically, we shall prove the following.
\begin{theorem}[Bogolyubov-Ruzsa lemma for Abelian groups]\label{thm.main}
Suppose that $G$ is an (discrete) Abelian group and $A,S \subset G$ are finite non-empty sets such that $|A+S| \leq K\min\{|A|,|S|\}$.  Then $(A-A)+(S-S)$ contains a proper symmetric $d(K)$-dimensional coset progression $M$ of size $\exp(-h(K))|A+S|$.  Moreover, we may take $d(K)=O(\log^62K)$ and $h(K)=O(\log^62K\log2\log 2K)$.
\end{theorem}
We should take a moment to justify the name, which is slightly non-standard.  Bogolyubov's lemma (the idea for which originates in \cite{bog::}) is usually stated for sets of large density in the ambient group, rather than small doubling, and asserts that the four-fold sumset of a thick set contains a large Bohr set.

Ruzsa, on his way to proving Fre{\u\i}man's theorem in \cite{ruz::9}, showed that a set with small doubling could be sensibly embedded into a group where it is thick.  He then applied Bogolyubov's lemma and proceeded to show that a Bohr set contains a large generalised arithmetic progression which could then be pulled back.  In doing all this he implicitly proved the first version of Theorem \ref{thm.main} in $\Z$ -- although, with different bounds -- and this motivates the name.

This result has many variants (although the form given above seems to be a fairly useful one) and in light of this the history is not completely transparent.  Certainly most proofs of Fre{\u\i}man's theorem broadly following the model of \cite{ruz::9} will implicitly prove a result of this shape.  With this in mind the extension from $\Z$ to arbitrary Abelian groups is due to Green and Ruzsa \cite{greruz::0}, and the first good bounds to Schoen \cite{sch::1} for certain classes of groups.

There are many applications of results of this type, particularly since their popularisation by Gowers \cite{gow::4}, and we shall deal with a number of these in \S\ref{sec.appln} at the end of the paper.  To help explain the main ideas we include a discursive sketch of the paper after the next section, which simply sets some notation.

\section{Notation}

The main tool used in the paper is Fourier analysis on groups for which the classic reference is Rudin \cite{rud::1}.  We deal almost exclusively with finite groups in the paper, but to be complete we shall need slightly more generality.

Suppose that $G$ is a locally compact topological group.  We write $C(G)$ for the space of continuous complex-valued functions on $G$.  More generally if $R \subset \C$ we write $C(G,R)$ for the continuous $R$-valued functions on $G$.

The group structure on $G$ induces an action of $G$ on $C(G)$ called translation.  In particular if $x \in G$ and $f \in C(G)$ then we write
\begin{equation}\label{eqn.rhodef}
\rho_x(f)(y):=f(yx) \textrm{ for all } y \in G.
\end{equation}
We also write $M(G)$ for the space of regular Borel measures on $G$ and can extend $\rho$ to these in the natural way: for $x \in G$ and $\mu \in M(G)$, $\rho_x(\mu)$ is the measure induced by
\begin{equation*}
C(G) \rightarrow C(G); f \mapsto \int{f(x)d\mu(yx)}.
\end{equation*}
The group structure on $G$ is reflected in $M(G)$ in a fairly natural way and we define the convolution of two measures $\mu,\nu \in M(G)$ to be the measure $\mu \ast \nu$ induced by
\begin{equation*}
C(G) \rightarrow C(G); f \mapsto \int{f(xy)d\mu(x)d\nu(y)}.
\end{equation*}
There is a family of privileged measures on $G$ called Haar measures.  These are the translation invariant measures on $G$: $\mu\in M(G)$ is a Haar measure on $G$ if $\rho_x(\mu)=\mu$ for all $x \in G$.

Given a Haar measure $\mu$ on $G$ we can extend $\rho$ in the obvious way from (\ref{eqn.rhodef}) to define the right regular representation $\rho:G \rightarrow \Aut(L^2(\mu))$.  More than this we can define the convolution of two functions $f,g \in L^1(\mu)$ by
\begin{equation*}
f \ast g(x):= \int{f(y)g(y^{-1}x)d\mu(y)} \textrm{ for all }x \in G.
\end{equation*}

There are two particularly useful instances of Haar measure depending on the topology on $G$: if $G$ is compact we write $\mu_G$ for the Haar probability measure on $G$, while if $G$ is discrete we write $\delta_G$ for the Haar counting measure on $G$ and assigns mass $1$ to each element of $G$.

Of course, if $G$ is finite it is both discrete and compact so one has both probability measure and counting measure to choose from.  The measures are multiples of each other as $\mu_G$ is just the measure assigning mass $|G|^{-1}$ to each element of $G$.  More generally given a finite set $X$ we write $\mu_X$ for the measure assigning mass $|X|^{-1}$ to each $x \in X$.

When it is relevant we shall indicate whether we are taking a finite group $G$ to be compact or discrete by declaring the group either compact, so that $\mu_G$ is to be used, or discrete so that $\delta_G$ is to be used.  The reader should be aware that this has the effect of changing the normalisations in convolutions.

The above all works for general finite groups $G$, but when $G$ is also Abelian convolution operators can be written in a particularly simple form with respect to the Fourier basis which we now recall.

We write $\wh{G}$ for the dual group, that is the finite Abelian group of homomorphisms $\gamma:G \rightarrow S^1$, where $S^1:=\{z \in \C:|z|=1\}$.  Given $\mu \in M(G)$ we define $\wh{\mu} \in \ell^\infty(\wh{G})$ by
\begin{equation*}
\wh{\mu}(\gamma):=\int{\overline{\gamma}d\mu} \textrm{ for all } \gamma \in \wh{G},
\end{equation*}
and extend this to $f \in L^1(\mu_G)$ by $\wh{f}:=\wh{fd\mu_G}$. It is easy to check that $\wh{\mu \ast \nu} = \wh{\mu}\cdot \wh{\nu}$ for all $\mu,\nu \in M(G)$ and $\wh{f \ast g} = \wh{f} \cdot \wh{g}$ for all $f,g \in L^1(\mu_G)$.

\section{A sketch of the argument}

Assuming the hypotheses of Theorem \ref{thm.main} our objective will be to show that there is a large, low-dimensional coset progression $M$ correlated with $A+S$, meaning such that
\begin{equation*}
\|1_{A+S}\ast \mu_M\|_{\ell^\infty(G)} > 1-o(1).
\end{equation*}
This is essentially the statement of Theorem \ref{thm.core} later, and Theorem \ref{thm.main} can be derived from it by a simple pigeonholing argument.

\subsection*{A simplified argument: the case of good modelling}  We shall assume that we have good modelling in the sense of \cite{greruz::0}, meaning that we shall assume that the sets $A$ and $S$ have density $K^{-O(1)}$ in the ambient group.  This can actually be arranged in the two cases of greatest interest: $\F_2^n$ and $\Z$ and facilitates considerable simplifications.

A very useful observation in L{\'o}pez and Ross \cite{lopros::} is that because the support of $\mu_A \ast \mu_S$ is contained in $A+S$ we have the identity
\begin{equation*}
\langle 1_{A+S}\ast \mu_{-S}, \mu_A \rangle = 1.
\end{equation*}
Now, suppose we had a coset progression $M$ over which $1_{A+S} \ast \mu_{-S}$ was in some sense invariant, meaning
\begin{equation}\label{eqn.uq}
 \|1_{A+S} \ast \mu_{-S} \ast \mu_M - 1_{A+S} \ast \mu_{-S}\|_{\ell^p(G)} \leq \epsilon\|1_{A+S}\|_{\ell^p(G)}.
\end{equation}
Then H{\"o}lder's inequality and the L{\'o}pez-Ross identity tell us that
\begin{equation*}
|\langle 1_{A+S} \ast \mu_{-S} \ast \mu_M,\mu_A \rangle - 1| \leq \epsilon  \|1_{A+S}\|_{\ell^p(G)}\|\mu_A\|_{\ell^{p/(p-1)}(G)} \leq \epsilon K^{1/p},
\end{equation*}
and it follows by averaging that $A+S$ is correlated with $M$ provided that $\epsilon \sim K^{-1/p}$.

The traditional Fourier analytic approach to finding an $M$ such that (\ref{eqn.uq}) holds is not particularly efficient, but recently Croot and Sisask showed that there is, at least, a set $Z$ such that we have (\ref{eqn.uq}) with $Z$ in place of $M$ and
\begin{equation*}
\mu_G(Z) \geq \exp(-O(\epsilon^{-2}p\log K))\mu_G(A).
\end{equation*}
Moreover, they noted by the triangle inequality that one can endow $Z$ with the structure of a $k$-fold sumset, so that we have (\ref{eqn.uq}) with $kX$ in place of $M$ and
\begin{equation}\label{eqn.bq}
\mu_G(X) \geq \exp(-O(k^2\epsilon^{-2}p\log K))\mu_G(A) = \exp(-O(k^2\log^2K))\mu_G(A),
\end{equation}
where the third term is by optimising the choice of $p\sim \log K$ given that $\epsilon \sim K^{-1/p}$.  

What we actually end up with after all this is a set $X$ with density as described in (\ref{eqn.bq}) such that
\begin{equation}\label{eqn.ere}
\langle 1_{A+S} \ast \mu_{-S} \ast \mu_X^{(k)},\mu_A \rangle >1-o(1).
\end{equation}
Now, by the usual sorts of applications of Plancherel's theorem and Cauchy-Schwarz we find that most of the Fourier mass of the inner product is concentrated on those characters in $\Spec_{1/2}(1_X)$ provided $2^k \sim K$, and so we choose $k \sim \log K$. 

With most of the Fourier mass supported on $\Spec_{1/2}(1_X)$, it follows that the integrand in (\ref{eqn.ere}) correlates with any set which approximately annihilates $\Spec_{1/2}(1_X)$.  It remains to show that the approximate annihilator of $\Spec_{1/2}(1_X)$ -- that is the Bohr set $B$ with $\Spec_{1/2}(1_X)$ as its frequency set -- contains a large coset progression.

We can now apply Chang's theorem to get that $B$ is low dimensional and then the usual geometry of numbers argument tells us that this Bohr set contains a large coset progression, and the result is proved.

\subsection*{Extending the argument: the case of bad modelling} We now drop the assumption of good modelling, and the argument proceeds in essentially the same way up until the application of Chang's theorem above.

In this case Chang's theorem does not provide good bounds.  Instead what we do is note that the set $X$ satisfies a relative polynomial growth condition
\begin{equation*}
|nX| \leq n^{O(\log^4K)}|X| \textrm{ for all } n \geq 1.
\end{equation*}
This lets us produce a Bohr set containing $X$ which behaves enough like a group for a relative version of Chang's theorem to hold, whilst at the same time $X$ is much denser in the Bohr set than it would be in the modelling group.

Since we are not using modelling what we have just done does not actually give us a Bohr set of low dimension, but rather a Bohr set of size comparable to $X$ which has a lower order of polynomial growth on a certain range.  It turns out that the usual argument that shows a low dimensional Bohr set contains a large coset progression can be adapted relatively easily to this more general setting and this gives us our final ingredient.

These arguments are spread over the paper as follows.  The simplified argument up to (\ref{eqn.ere}) is essentially contained in \S\ref{sec.bst}.  Then, in \S\ref{sec.bohrprop}, we record the basic properties of Bohr sets we need before \S\ref{sec.relchang}, which has the relative version of Chang's theorem, and \S\ref{sec.bohrcont}, which puts the material together to take a set satisfying a relative polynomial growth condition and produce a large Bohr superset.

After the material on Bohr sets we have \S\ref{sec.cov} which records some standard covering lemmas and then \S\ref{sec.lcp} where we show how to find a large coset progression in a Bohr set with relative polynomial growth.  Finally the argument is all put together in \S\ref{sec.pf}.

\section{Fre{\u\i}man-type theorems in arbitrary groups}\label{sec.bst}

In this section we are interested in Fre{\u\i}man-type theorems in arbitrary, possibly non-Abelian, groups.  There has been considerable work towards such results, although often with restrictions on the type of non-Abelian groups considered, or rather weak bounds.  We direct the reader to \cite{gre::05} for a survey, but our interest is narrower, lying with a crucial result of Tao \cite[Proposition C.3]{tao::9} which inspires the following.
\begin{proposition}\label{prop.bst}  Suppose that $G$ is a (discrete) group, $A,S \subset G$ are finite non-empty sets such that $|AS| \leq K\min\{|A|,|S|\}$, and $k \in \N$ is a parameter.  Then $A^{-1}ASS^{-1}$ contains $X^k$ where $X$ is a symmetric neighbourhood of the identity with size $\delta(k,K)|AS|$.  Moreover, we may take $\delta(k,K) \geq \exp(-O(k^2\log^22K))$.
\end{proposition}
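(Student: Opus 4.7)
The plan is to follow the Croot--Sisask strategy outlined in Section 2, adapted to the non-abelian setting. Set $f := 1_{AS} \ast \mu_{S^{-1}}$. For each $a \in A$, since $aS \subseteq AS$, one has $f(a) = |AS \cap aS|/|S| = 1$; this is the non-abelian L{\'o}pez--Ross identity, and it gives $\langle f, \mu_A\rangle = 1$. Moreover $\supp(f) \subseteq ASS^{-1}$.

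Next, I would apply the Croot--Sisask almost-periodicity lemma in a right-translation form adapted to non-abelian groups, with parameters $p \geq 2$ and $\epsilon > 0$. Writing $f = \E_{s \in S}\rho_s 1_{AS}$, one samples $s_1,\dots,s_m$ i.i.d.\ uniformly from $S$ for $m \sim k^2 p/\epsilon^2$, uses a Marcinkiewicz--Zygmund inequality for the resulting empirical average, and extracts a large return set via a covering/pigeonhole step on the $m$-tuples of samples. This produces a symmetric neighbourhood $X$ of the identity with $|X| \geq \exp(-O(k^2\epsilon^{-2}p\log K))|AS|$ such that $\|\rho_t f - f\|_p \leq (\epsilon/k)\|1_{AS}\|_p$ for every $t \in X$, where $\rho_t$ denotes right translation. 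For $x = x_1 \cdots x_k \in X^k$, the identity $\rho_{st} = \rho_s\rho_t$ yields the telescope
\[
\rho_x f - f = \sum_{i=1}^{k}\rho_{x_1 \cdots x_{i-1}}\bigl(\rho_{x_i} f - f\bigr),
\]
and right-translation invariance of the $\ell^p(G)$ norm then gives $\|\rho_x f - f\|_p \leq \epsilon \|1_{AS}\|_p$.

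By H{\"o}lder's inequality together with $\langle f, \mu_A\rangle = 1$,
\[
|\langle \rho_x f, \mu_A\rangle - 1| \leq \|\rho_x f - f\|_p \,\|\mu_A\|_{p/(p-1)} \leq \epsilon \bigl(|AS|/|A|\bigr)^{1/p} \leq \epsilon K^{1/p}.
\]
Taking $p := \lceil \log K\rceil$ and $\epsilon$ a small absolute constant makes the right-hand side strictly less than $1$, so $\langle \rho_x f, \mu_A\rangle > 0$ for every $x \in X^k$. Hence there exists $a \in A$ with $f(ax) > 0$; but $\supp(f) \subseteq ASS^{-1}$, so $ax \in ASS^{-1}$ and therefore $x \in a^{-1}ASS^{-1} \subseteq A^{-1}ASS^{-1}$, as desired. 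Substituting the chosen values of $\epsilon$ and $p$ into the lower bound on $|X|$ gives $|X| \geq \exp(-O(k^2\log^2 K))|AS|$.

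The main obstacle is setting up the right-translation form of the Croot--Sisask lemma cited above with the correct dependence on $|AS|$. Non-commutativity causes no essential difficulty because the $\ell^p(G)$ norm is right-translation invariant, but one must run the sampling step with some care so that the hypothesis $|AS| \leq K|A|$ feeds correctly into the density bound on $X$ and produces the $\log K$ factor. It is also worth observing that the specific two-sided product $A^{-1}ASS^{-1}$ in the conclusion, rather than some ``wrong-order'' variant such as $ASS^{-1}A^{-1}$, emerges precisely from combining right translations of $f$ with pairing against $\mu_A$ on the left; this is what dictates the choice of right (rather than left) translations throughout.
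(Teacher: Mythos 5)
Your argument is correct and follows essentially the same route as the paper: the paper factors the work through its Proposition~\ref{prop.key} (applying Lemma~\ref{lem.cs} to $1_{AS}$ averaged by $\mu_{S^{-1}}$, telescoping to $X^k$, then using the L{\'o}pez--Ross identity $\mu_{A^{-1}}\ast 1_{AS}\ast\mu_{S^{-1}}(1_G)=1$ with H{\"o}lder and $p\sim\log K$), and then specialises with $T=A$, $L=K$, $\epsilon=1/2$, while you streamline the same chain of steps into a single argument. The only small imprecision is that the Croot--Sisask pigeonholing naturally yields $|X|\geq\exp(-O(k^2\epsilon^{-2}p\log K))|A|$ (with $X\subset Aa^{-1}$ for some $a\in A$), and one then passes to $|AS|$ via $|A|\geq|AS|/K$; stating the bound directly in terms of $|AS|$ skips this conversion, but it is absorbed into the implied constants and does not affect the conclusion.
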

Note that this result is a very weak version of Theorem \ref{thm.main} but for any group, not just Abelian groups, and despite its weaknesses, its generality makes it useful in some situations.

Croot and Sisask essentially prove the above result in \cite[Theorem 1.6]{crosis::} with weaker $K$-dependence in the bound, by using the $p=2$ version of their Lemma \ref{lem.cs} below.  It turns out that we shall be able to show the above bound by coupling the large $p$ case of their result with the L{\'o}pez-Ross identity.

The key proposition of this section then, is the following.
\begin{proposition}\label{prop.key}
Suppose that $G$ is (discrete) a group, $A,S,T \subset G$ are finite non-empty sets such that $|AS| \leq K|A|$ and $|TS| \leq L|S|$, and $k \in \N$ and $\epsilon \in (0,1]$ are a pair of parameters.  Then there is a symmetric neighbourhood of the identity $X \subset G$ with
\begin{equation*}
|X| \geq \exp(-O(\epsilon^{-2}k^2\log 2K \log 2L))|T|
\end{equation*}
such that
\begin{equation*}
|\mu_{A^{-1}} \ast 1_{AS}\ast \mu_{S^{-1}}(x) - 1| \leq \epsilon \textrm{ for all }x \in X^k.
\end{equation*}
\end{proposition}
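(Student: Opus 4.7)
The plan is to combine the L\'opez-Ross identity with the Croot-Sisask almost-periodicity lemma (Lemma~\ref{lem.cs}), in the spirit of the sketch in \S 2. Put $g:=1_{AS}\ast \mu_{S^{-1}}$. For each $a\in A$ we have $aS\subset AS$, whence $g(a)=|S\cap a^{-1}AS|/|S|=1$; in particular $\mu_{A^{-1}}\ast g(e)=\langle g,\mu_A\rangle=1$, which is the L\'opez-Ross identity in this setting. To turn this single-point identity into a statement about $x\in X^k$ it is natural to compare $g$ with its right translate $R_xg(y):=g(yx)$, since $(\mu_{A^{-1}}\ast g)(x)=\langle R_xg,\mu_A\rangle$ and H\"older's inequality gives
\begin{equation*}
|\mu_{A^{-1}}\ast g(x)-1|=|\langle R_xg-g,\mu_A\rangle|\le \|R_xg-g\|_{\ell^p(G)}\cdot |A|^{-1/p}
\end{equation*}
for any parameter $p\ge 2$.

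Next I would apply Lemma~\ref{lem.cs} to the function $1_{AS}$ with convolutor $\mu_{S^{-1}}$ and translation set $T$: the hypothesis $|TS|\le L|S|$ is precisely the doubling-type quantity needed to control almost-invariance of $g$ under right translation by elements of $T$. For parameters $\epsilon_0\in(0,1]$ and $p\ge 2$, this should yield a symmetric neighbourhood of the identity $X$ with
\begin{equation*}
|X|\ge \exp(-O(\epsilon_0^{-2}p\log L))|T|
\end{equation*}
such that $\|R_xg-g\|_{\ell^p(G)}\le \epsilon_0|AS|^{1/p}$ for every $x\in X$. A telescoping triangle-inequality argument then extends this, at the cost of a factor $k$, to $x\in X^k$:
\begin{equation*}
\|R_xg-g\|_{\ell^p(G)}\le k\epsilon_0|AS|^{1/p}\qquad(x\in X^k).
\end{equation*}

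Plugging this into the H\"older estimate and using the doubling hypothesis $|AS|\le K|A|$ yields $|\mu_{A^{-1}}\ast g(x)-1|\le k\epsilon_0K^{1/p}$. Choosing $p\sim \log K$ makes $K^{1/p}=O(1)$, and then choosing $\epsilon_0\sim \epsilon/k$ gives the required pointwise bound while producing the size bound $|X|\ge \exp(-O(k^2\epsilon^{-2}\log K\log L))|T|$ that appears in the statement.

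The main obstacle is ensuring that Lemma~\ref{lem.cs} can be applied in the possibly non-abelian setting so that the almost-periodicity of $g=1_{AS}\ast\mu_{S^{-1}}$ is with respect to the correct (right) translation action, is governed precisely by the multiplicative doubling $|TS|/|S|$ of $T$ against $S$, and telescopes to $X^k$ rather than some alternating product like $(XX^{-1})^k$. Once the correct variant of the Croot-Sisask lemma is identified, the H\"older/L\'opez-Ross bridge and the optimisation in $p$ are essentially routine.
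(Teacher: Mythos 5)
Your proposal matches the paper's proof essentially line by line: apply Lemma~\ref{lem.cs} to $f=1_{AS}$ convolved on the right with $\mu_{S^{-1}}$, at the shrunken parameter $\epsilon_0\sim\epsilon/k$, telescope via the triangle inequality to $X^k$, bridge with the L\'opez--Ross identity and H\"older through $\mu_{A^{-1}}\ast g(x)-\mu_{A^{-1}}\ast g(1_G)=\langle\mu_A,\rho_x(g)-g\rangle$, and optimise $p\sim 2+\log K$. The one detail you flag but leave open is resolved exactly as you suspect: one applies Lemma~\ref{lem.cs} with the pair $(S^{-1},T^{-1})$, so the hypothesis becomes $|S^{-1}T^{-1}|=|TS|\le L|S^{-1}|$, and since $\rho$ is isometric and $\rho_{1_G}$ is the identity one may take $X$ to be a symmetric neighbourhood of the identity, after which the $k$-fold telescope lands in $X^k$ (not $(XX^{-1})^k$).
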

The main ingredient in the proof of this is the following result which is essentially \cite[Proposition 3.3]{crosis::}.  To prove it Croot and Sisask introduced the idea of sampling from physical space rather than Fourier space -- sampling in Fourier space can be seen as the main idea in Chang's theorem.  Not only does this work in settings where the Fourier transform is less well behaved, but it also runs much more efficiently, which leads to the superior bounds.

We include the proof since it is the pivotal ingredient of this paper, and we frame it in such a way as to emphasise the parallels with Chang's theorem. 
\begin{lemma}[Croot-Sisask]\label{lem.cs}  Suppose that $G$ is a (discrete) group, $f \in \ell^p(G)$ for $p \geq 2$ and $S,T \subset G$ are non-empty with $|ST| \leq K|S|$.  Then there is a $t \in T$ and a set $X \subset Tt^{-1}$ with $|X| \geq (2K)^{-O(\epsilon^{-2}p)}|T|$ such that
\begin{equation*}
\|\rho_x(f \ast \mu_S) - f \ast \mu_S\|_{\ell^p(G)} \leq \epsilon \|f\|_{\ell^p(G)} \textrm{ for all } x \in X.
\end{equation*}
\end{lemma}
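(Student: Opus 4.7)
The plan is the physical-space sampling philosophy of Croot and Sisask. The crux is the choice of parametrisation: set $k := \lceil C_0\epsilon^{-2}p\rceil$ (with $C_0$ a suitable absolute constant) and, for each $t \in T$ and $\mathbf s = (s_1,\ldots,s_k) \in S^k$, consider the empirical average
\[\tilde g_t(\mathbf s)(y) := \frac{1}{k}\sum_{i=1}^k f\bigl(y(s_it)^{-1}\bigr).\]
Writing $g := f \ast \mu_S$, one checks that $\E_\mathbf{s}\,\tilde g_t(\mathbf s) = \rho_{t^{-1}}g$. The point of choosing $(s_it)^{-1}$ rather than the more symmetric-looking $ts_i^{-1}$ is that the resulting translates lie in $(ST)^{-1}$, a set of size $\leq K|S|$ thanks to the hypothesis; the alternative would place the translates in $TS^{-1}$, about which we know nothing in the non-abelian setting. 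Identifying this parametrisation is the main conceptual step.

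Applying the vector-valued Marcinkiewicz--Zygmund inequality pointwise in $y$ and then summing via Fubini yields
\[\E_\mathbf{s}\,\|\tilde g_t(\mathbf s) - \rho_{t^{-1}}g\|_{\ell^p(G)}^p \leq C^p(p/k)^{p/2}\|f\|_{\ell^p(G)}^p,\]
which for our choice of $k$ is at most $(\epsilon/4)^p\|f\|_{\ell^p(G)}^p$. Averaging further over $t \in T$ and applying Markov's inequality then produces a subset $\mathcal G \subseteq T \times S^k$ of size at least $\tfrac12|T||S|^k$ on which $\|\tilde g_t(\mathbf s) - \rho_{t^{-1}}g\|_{\ell^p(G)} \leq (\epsilon/2)\|f\|_{\ell^p(G)}$.

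Now observe that $\tilde g_t(\mathbf s)$ depends on $(t,\mathbf s)$ only through the multiset $\{(s_it)^{-1}: 1\leq i\leq k\}$, which is a $k$-multiset in $(ST)^{-1}$. Since $|(ST)^{-1}| = |ST| \leq K|S|$, the number of such multisets is at most $\binom{K|S|+k-1}{k} \leq (2eK|S|/k)^k$. Pigeonholing in $\mathcal G$ yields a single multiset $M^*$ hit by at least $|T||S|^k/\bigl(2(2eK|S|/k)^k\bigr)$ preimages; since each fixed $t \in T$ contributes at most $k!$ orderings to any given multiset, after dividing by $k!$ and applying Stirling's formula we obtain a subset $T^* \subseteq T$ of size at least $|T|/\bigl(C'\sqrt{k}\,(2K)^k\bigr) \geq (2K)^{-O(\epsilon^{-2}p)}|T|$, together with a choice $\mathbf s(t) \in S^k$ for each $t \in T^*$ satisfying $(t,\mathbf s(t)) \in \mathcal G$ such that the corresponding empirical averages all coincide with the single function $F^* := \tfrac{1}{k}\sum_{u\in M^*} f(\,\cdot\, u)$.

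Finally, pick any $t_0 \in T^*$ and set $X := T^* t_0^{-1} \subseteq Tt_0^{-1}$; then $|X|=|T^*|$ satisfies the required lower bound. For $x = tt_0^{-1} \in X$, the triangle inequality applied to the two $\mathcal G$-estimates $\|F^* - \rho_{t^{-1}}g\|_{\ell^p}, \|F^* - \rho_{t_0^{-1}}g\|_{\ell^p} \leq (\epsilon/2)\|f\|_{\ell^p}$ gives $\|\rho_{t^{-1}}g - \rho_{t_0^{-1}}g\|_{\ell^p(G)} \leq \epsilon\|f\|_{\ell^p(G)}$, and applying the $\ell^p$-isometry $\rho_t$ to the difference rewrites this as $\|\rho_x g - g\|_{\ell^p(G)} \leq \epsilon\|f\|_{\ell^p(G)}$, completing the argument.
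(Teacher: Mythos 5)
Your proof is correct and follows the same physical-space sampling philosophy, but the final pigeonhole step is genuinely different from the paper's. After the Marcinkiewicz--Zygmund/Khintchine step (which, up to the cosmetic device of sampling $(s_it)^{-1}$ rather than $s_i^{-1}$ -- these differ by the isometry $\rho_{t^{-1}}$, so your $\tilde g_t(\mathbf s)$ is exactly $\rho_{t^{-1}}$ of the paper's empirical average), both arguments possess a large ``good'' subset of $S^k$; the hypothesis $|ST|\leq K|S|$ is used identically, to confine the translates to a set of size $\leq K|S|$. The divergence is in how $X$ is extracted. You pigeonhole on $k$-multisets in $(ST)^{-1}$ and pay a factor $k!$ for the unordering (absorbed by Stirling), forcing every $t\in T^*$ to land on a \emph{single} common multiset $M^*$, hence a single common empirical average $F^*$. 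The paper instead runs a Cauchy--Schwarz energy argument on the diagonal $\Delta=\{(t,\dots,t):t\in T\}$: from $|L\Delta|\leq 2K^k|L|$ it produces, for many $x\in Tt^{-1}$, a \emph{pair} $z(x),y(x)\in L$ with $y(x)_i=z(x)_ix$, with no requirement that the pairs agree across different $x$. This is a weaker demand, which is why the energy argument avoids both the multiset count and the $k!$/Stirling bookkeeping and is a bit cleaner (and why Sanders says he reframes the Croot--Sisask proof ``to emphasise the parallels with Chang's theorem''). Your route is closer to the original Croot--Sisask argument; both yield $(2K)^{-O(\epsilon^{-2}p)}|T|$, with the paper's version silently avoiding the mild side condition $K|S|\geq k$ that your binomial estimate uses (harmless, but worth noting). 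One small remark on the endgame: after applying $\rho_t$ you land on $\|g-\rho_{tt_0^{-1}}g\|_{\ell^p}\leq\epsilon\|f\|_{\ell^p}$, which is the stated conclusion for $x=tt_0^{-1}\in T^*t_0^{-1}$ as you claim.
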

\begin{proof}
Let $z_1,\dots,z_k$ be independent uniformly distributed $S$-valued random variables, and for each $y \in G$ define $Z_i(y):=\rho_{z_i^{-1}}(f)(y) - f \ast \mu_S(y)$.  For fixed $y$, the variables $Z_i(y)$ are independent and have mean zero, so it follows by the Marcinkiewicz-Zygmund inequality and H{\"o}lder's inequality that
\begin{eqnarray*}
\| \sum_{i=1}^k{Z_i(y)}\|_{L^p(\mu_S^k)}^p &\leq &O(p)^{p/2}\int{\left(\sum_{i=1}^k{|Z_i(y)|^2}\right)^{p/2}d\mu_S^k}\\ & \leq & O(p)^{p/2}k^{p/2-1}\sum_{i=1}^k{\int{|Z_i(y)|^p}d\mu_S^k}.
\end{eqnarray*}
Summing over $y$ and interchanging the order of summation we get
\begin{equation}\label{eqn.khin}
\sum_{y \in G}{\| \sum_{i=1}^k{Z_i(y)}\|_{L^p(\mu_S^k)}^p} \leq O(p)^{p/2}k^{p/2-1}\int{\sum_{i=1}^k{\sum_{y \in G}{|Z_i(y)|^p}}d\mu_{S}^k}.
\end{equation}
On the other hand,
\begin{equation*}
\left(\sum_{y\in G}{|Z_i(y)|^p}\right)^{1/p} =\|Z_i\|_{\ell^p(G)} \leq \|\rho_{z_i^{-1}}(f)\|_{\ell^p(G)} + \|f \ast \mu_S\|_{\ell^p(G)} \leq 2\|f\|_{\ell^p(G)}
\end{equation*}
by the triangle inequality.  Dividing (\ref{eqn.khin}) by $k^p$ and inserting the above and the expression for the $Z_i$s we get that
\begin{equation*}
\int{\sum_{y \in G}{\left|\frac{1}{k}\sum_{i=1}^k{\rho_{z_i^{-1}}(f)(y)} - f \ast \mu_S(y)\right|^p}d\mu_S^k(z)}=O(pk^{-1}\|f\|_{\ell^p(G)}^2)^{p/2}.
\end{equation*}
Pick $k=O(\epsilon^{-2}p)$ such that the right hand side is at most $(\epsilon \|f\|_{\ell^p(G)}/4)^p$ and write $\mathcal{L}$ for the set of $x \in S\times \dots \times S$ (where the Cartesian product is $k$-fold) for which the integrand above is at most $(\epsilon \|f\|_{\ell^p(G)}/2)^p$; by averaging $\mu_S^k(\mathcal{L}^c) \leq 2^{-p}$ and so $\mu_S^k(\mathcal{L}) \geq 1-2^{-p} \geq 1/2$.

Now, $\Delta:=\{(t,\dots,t): t \in T\}$ has $\mathcal{L}\Delta  \subset ST\times \dots \times ST$, whence $|\mathcal{L} \Delta | \leq 2K^k|\mathcal{L}|$ and so
\begin{equation*}
\langle 1_\Delta \ast 1_{\Delta^{-1}},1_{\mathcal{L}^{-1}} \ast 1_{\mathcal{L}}\rangle_{\ell^2(G\times \dots \times G)} = \|1_\mathcal{L} \ast 1_\Delta \|_{\ell^2(G\times \dots \times G)}^2\geq |\Delta|^2|\mathcal{L}|/2K^k,
\end{equation*}
by the Cauchy-Schwarz inequality since the adjoint of $g \mapsto 1_{\mathcal{L}} \ast g$ is $g \mapsto 1_{\mathcal{L}^{-1}} \ast g$ and similarly for $g \mapsto g \ast 1_\Delta$.

By averaging it follows that at least $|\Delta|^2/2K^k$ pairs $(z,y) \in \Delta\times\Delta$ have $1_{\mathcal{L}^{-1}} \ast 1_{\mathcal{L}}(zy^{-1})>0$, and hence there is some $t \in T$ such that there is a set $X\subset Tt^{-1}$ of size at least $|T|/2K^k$ elements with $1_{\mathcal{L}^{-1}} \ast 1_{\mathcal{L}}(x,\dots,x)>0$ for all $x \in X$. 

Thus for each $x \in X$ there is some $z(x) \in \mathcal{L}$ and $y(x) \in \mathcal{L}$ such that $y(x)_i=z(x)_ix$.  But then by the triangle inequality we get that
\begin{eqnarray*}
\|\rho_{x^{-1}}(f \ast \mu_S) - f \ast \mu_S\|_{\ell^p(G)}& \leq &\|\rho_{x^{-1}}\left(\frac{1}{k}\sum_{i=1}^k{\rho_{z(x)_i^{-1}}(f)}\right) -  f \ast \mu_S\|_{\ell^p(G)}\\&&+\|\rho_{x^{-1}}\left(\frac{1}{k}\sum_{i=1}^k{\rho_{z(x)_i^{-1}}(f)} - f \ast \mu_S \right)\|_{\ell^p(G)}.
\end{eqnarray*}
However, since $\rho_x$ is isometric on $\ell^p(G)$ we see that
\begin{eqnarray*}
\|\rho_x(f \ast \mu_S) - f \ast \mu_S\|_{\ell^p(G)} &\leq & \|\frac{1}{k}\sum_{i=1}^k{\rho_{y(x)_i^{-1}}(f)}-  f \ast \mu_S\|_{\ell^p(G)}\\&&+\|\frac{1}{k}\sum_{i=1}^k{\rho_{z(x)_i^{-1}}(f)} - f \ast \mu_S\|_{\ell^p(G)},
\end{eqnarray*}
and we are done since $z(x),y(x) \in \mathcal{L}$.
\end{proof}
The important thing to note about the Croot-Sisask lemma is that the $p$-dependence of the size of the set $X$ is very good.  The natural Fourier analytic analogue (essentially given in \cite{bou::4}, and clearly exposited in \cite{sis::}) gives an exponentially worse bound.  To make use of this strength we use the aforementioned L{\'o}pez-Ross identity.
\begin{proof}[Proof of Proposition \ref{prop.key}]  We apply Lemma \ref{lem.cs} to the function $f:=1_{AS}$ and with the set $S^{-1}$ (so that $|S^{-1}T^{-1}| \leq L|S^{-1}|$) to get a set $X$ with $|X| \geq (2L)^{O(\epsilon^{-2}k^2p)}|T|$ such that
\begin{equation*}
\|\rho_x(1_{AS} \ast \mu_{S^{-1}}) - 1_{AS}\ast\mu_{S^{-1}}\|_{\ell^p(G)} \leq \frac{\epsilon  \|1_{AS}\|_{\ell^p(G)}}{ek} \textrm{ for all } x \in X.
\end{equation*}
Since $\rho$ is isometric on $\ell^p(G)$ and $\rho_{1_G}$ is the identity we may certainly assume that $X$ is a symmetric neighbourhood of the identity.  Furthermore, by the triangle inequality we have
\begin{equation*}
\|\rho_x(1_{AS} \ast \mu_{S^{-1}}) - 1_{AS}\ast\mu_{S^{-1}}\|_{\ell^p(G)} \leq \epsilon e^{-1} \|1_{AS}\|_{\ell^p(G)} \textrm{ for all } x \in X^k.
\end{equation*}
Now for any (real) function $g$ we have
\begin{equation*}
\mu_{A^{-1}} \ast g(x) - \mu_{A^{-1}} \ast g(1_G) = \mu_{A^{-1}} \ast (\rho_x(g) - g) (1_G) = \langle\mu_A, \rho_x(g)-g\rangle.
\end{equation*}
Thus by H{\"o}lder's inequality we have
\begin{equation*}
|\mu_{A^{-1}} \ast g(x) - \mu_{A^{-1}} \ast g(1_G) | \leq \|\mu_A\|_{\ell^{p'}(G)}\|\rho_x(g)-g\|_{\ell^p(G)}.
\end{equation*}
Putting $g=1_{AS} \ast \mu_{S^{-1}}$ we conclude that
\begin{eqnarray*}
|\mu_{A^{-1}} \ast 1_{AS} \ast \mu_{S^{-1}}(x) - \mu_{A^{-1}} \ast 1_{AS} \ast \mu_{S^{-1}}(1_G) | & \leq & \frac{\epsilon \|\mu_A\|_{\ell^{p'}(G)}\|1_{AS}\|_{\ell^p(G)}}{e}\\ & \leq & \frac{\epsilon |A|^{1/p'}|AS|^{1/p}}{e|A|}\leq \frac{\epsilon K^{1/p}}{e}
\end{eqnarray*}
for all $x \in X^k$.  Putting $p:=2+\log K$ we get the conclusion.
\end{proof}
\begin{proof}[Proof of Proposition \ref{prop.bst}]  We simply take $T=A$, $L=K$ and $\epsilon=1/2$ in Proposition \ref{prop.key}.
\end{proof}

\section{Basic properties of Bohr sets}\label{sec.bohrprop}

Following \cite{bou::1} we use a slight generalization of the traditional notion of Bohr set, letting the width parameter vary according to the character.  The advantage of this definition is that the meet of two Bohr sets in the lattice of Bohr sets is then just their intersection.

Throughout the section we let $G$ be a finite (compact) Abelian group.  A set $B$ is called a \emph{Bohr set} if there is a \emph{frequency set} $\Gamma$ of characters on $G$, and a \emph{width function} $\delta \in (0,2]^\Gamma$ such that
\begin{equation*}
B=\{x \in G: |1-\gamma(x)| \leq \delta_\gamma \textrm{ for
all }\gamma \in \Gamma\}.
\end{equation*}
Technically the same Bohr set can be defined by different frequency sets and width functions; we make the standard abuse that when we introduce a Bohr set we are implicitly fixing a frequency set and width function.

There is a natural way of dilating Bohr sets which will be of particular use to us.  For a Bohr set $B$ and $\rho \in \R^+$ we denote by $B_\rho$ the Bohr set with frequency set $\Gamma$ and width function\footnote{Technically width function $\gamma \mapsto \min\{\rho \delta_\gamma,2\}$.} $\rho\delta$ so that, in particular, $B=B_1$ and more generally $(B_\rho)_{\rho'} =B_{\rho\rho'}$.

Given two Bohr sets $B$ and $B'$ we define their \emph{intersection} to be the Bohr set with frequency set $\Gamma \cup \Gamma'$ and width function $\delta \wedge \delta'$.  A simple averaging argument (\emph{c.f.} \cite[Lemma 4.20]{taovu::} but also the end of Lemma \ref{lem.cs}) can be used to see that the intersection of several Bohr sets is large.
\begin{lemma}[Intersections of Bohr sets]\label{lem.inter}
Suppose that $(B^{(i)})_{i=1}^k$ is a sequence of Bohr sets.  Then
\begin{equation*}
\mu_G(\bigwedge_{i=1}^kB^{(i)}) \geq \prod_{i=1}^k{\mu_G(B^{(i)}_{1/2})}.
\end{equation*}
\end{lemma}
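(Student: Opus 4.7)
My plan is to construct, for each $i$, an auxiliary nonnegative function $g_i$ on $G$ built from $B^{(i)}_{1/2}$ that (a) is supported in $B^{(i)}_1$, (b) is pointwise bounded by $\mu_G(B^{(i)}_{1/2})$, and (c) has nonnegative Fourier transform with $\wh{g}_i(0) = \mu_G(B^{(i)}_{1/2})^2$. Granted these properties, the product $G := \prod_{i=1}^{k} g_i$ is nonnegative, pointwise bounded by $\prod_i \mu_G(B^{(i)}_{1/2})$, and supported in $\bigcap_i B^{(i)}_1 = (\bigwedge_{i=1}^k B^{(i)})_1$, which yields the one-line upper bound
\[
\int G\, d\mu_G \;\leq\; \prod_{i=1}^{k}\mu_G(B^{(i)}_{1/2}) \cdot \mu_G\bigl((\bigwedge_{i=1}^{k}B^{(i)})_{1}\bigr).
\]
Everything then reduces to a matching lower bound on $\int G\, d\mu_G$.

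The concrete choice I will make is the autocorrelation $g_i := \mathbf{1}_{B^{(i)}_{1/2}} \ast \mathbf{1}_{B^{(i)}_{1/2}}$. Property (a) comes from the sub-additivity $|1-\gamma(y_1+y_2)| \leq |1-\gamma(y_1)|+|1-\gamma(y_2)|$ applied to $\gamma \in \Gamma^{(i)}$, which forces $B^{(i)}_{1/2}+B^{(i)}_{1/2} \subseteq B^{(i)}_1$; property (b) uses that this autocorrelation peaks at $0$, where $g_i(0) = \mu_G(B^{(i)}_{1/2})$; and property (c) follows because the symmetry $-B^{(i)}_{1/2} = B^{(i)}_{1/2}$ makes $\mathbf{1}_{B^{(i)}_{1/2}}$ an even function, so $\wh{g}_i = |\wh{\mathbf{1}}_{B^{(i)}_{1/2}}|^2$ is a nonnegative real function, with $\wh{g}_i(0) = \mu_G(B^{(i)}_{1/2})^2$.

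The only substantive step is the lower bound on $\int G\, d\mu_G$, and this is the point of the whole construction. Expanding $G = \prod_i g_i$ via the Fourier inversion formula, integrating, and using orthogonality of characters gives
\[
\int G\, d\mu_G \;=\; \sum_{\gamma_1+\cdots+\gamma_k = 0}\prod_{i=1}^{k}\wh{g}_i(\gamma_i),
\]
a sum of nonnegative terms thanks to property (c). Retaining only the summand with $\gamma_1=\cdots=\gamma_k=0$ gives $\int G\, d\mu_G \geq \prod_i \wh{g}_i(0) = \prod_i \mu_G(B^{(i)}_{1/2})^2$. Combining with the upper bound above and dividing through by $\prod_i \mu_G(B^{(i)}_{1/2})$ produces the claim. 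I anticipate no serious obstacle; the only point requiring any care is the Fourier positivity (c), which as noted is the standard behaviour of autocorrelation kernels on an abelian group.
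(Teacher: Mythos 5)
Your proof is correct, and it gives a Fourier-side presentation of what is in substance the same argument as the paper's. The paper works in physical space in the product group $G^k$: writing $\Delta:=\{(x,\dots,x):x\in G\}$ and $S:=B^{(1)}_{1/2}\times\cdots\times B^{(k)}_{1/2}$, it notes that $\int 1_\Delta\ast 1_{-\Delta}\,1_S\ast 1_{-S}\,d\mu_{G^k}=\int(1_\Delta\ast 1_S)^2\,d\mu_{G^k}\geq\mu_{G^k}(\Delta)^2\mu_{G^k}(S)^2$ by Cauchy--Schwarz, and then bounds the left side from above using the pointwise bound on the integrand and the fact that its support lies on the diagonal over $(\bigwedge_i B^{(i)})_1$. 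Your construction is the Fourier dual of this: the function $\prod_i g_i$ is exactly the restriction of $1_S\ast 1_{-S}$ to the diagonal, and your identity $\int\prod_i g_i\,d\mu_G=\sum_{\gamma_1+\cdots+\gamma_k=0}\prod_i\wh{g}_i(\gamma_i)$ is the same computation viewed through Parseval, once one notes that $\wh{1}_\Delta$ is supported on $\{\gamma_1+\cdots+\gamma_k=0\}$; discarding all but the $\gamma_i=0$ term plays precisely the role of Cauchy--Schwarz. What your version buys is that it stays inside $G$ and makes the underlying mechanism---nonnegativity of the Fourier transform of an autocorrelation---explicit; the paper's formulation in $G^k$ is partly thematic, deliberately echoing the diagonal-intersection trick it has just used at the end of Lemma \ref{lem.cs}. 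A small simplification to note: taking $g_i=1_{B^{(i)}_{1/2}}\ast 1_{-B^{(i)}_{1/2}}$ gives $\wh{g}_i=|\wh{1}_{B^{(i)}_{1/2}}|^2\geq 0$ without appealing to the symmetry of the Bohr set at all, though of course the symmetry is available and your route through it is also correct.
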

\begin{proof}
Let $\Delta:=\{(x,\dots,x) \in G^k: x \in G\}$ and $S:=B_{1/2}^{(1)}\times \dots \times B_{1/2}^{(k)}$.  Then
\begin{equation}\label{eqn.j}
\int{1_\Delta \ast 1_{-\Delta} 1_S \ast 1_{-S} d\mu_{G^k}} = \int{(1_\Delta \ast 1_S)^2d\mu_{G^k}} \geq \mu_{G^k}(\Delta)^2\mu_{G^k}(S)^2
\end{equation}
by Cauchy-Schwarz.  The integrand on the left hand side is at most $\mu_{G^k}(\Delta)\mu_{G^k}(S)$ and it is supported on the set of $x \in \Delta-\Delta=\Delta$ such that $1_S \ast 1_{-S}(x)>0$.  But if $1_S\ast 1_{-S}(y,\dots,y)>0$ then
\begin{equation*}
y \in \bigcap_{i=1}^k{(B_{1/2}^{(i)}-B_{1/2}^{(i)})} \subset \bigcap_{i=1}^k{B_1^{(i)}} = (\bigwedge_{i=1}^k{B^{(i)}})_1.
\end{equation*}
Hence
\begin{equation*}
\mu_{G^k}(\supp 1_\Delta \ast 1_{-\Delta} 1_S \ast 1_{-S}) \leq \mu_G((\bigwedge_{i=1}^k{B^{(i)}})_1)\mu_{G^k}(\Delta),
\end{equation*}
and inserting this in (\ref{eqn.j}) we get that
\begin{equation*}
\mu_G((\bigwedge_{i=1}^k{B^{(i)}})_1)\mu_{G^k}(\Delta)^2\mu_{G^k}(S) \geq \mu_{G^k}(\Delta)^2\mu_{G^k}(S)^2.
\end{equation*}
The result follows after some cancelation and noting that $\mu_{G^k}(S)$ is just the right hand side of the inequality in the statement of the lemma.
\end{proof}
Note that if $B$ is a Bohr set whose frequency set has one element, and whose width function is the constant function $2$ then there is an easy lower bound for $\mu_G(B_\eta)$ as the length of a certain arc on a circle:
\begin{equation}\label{eqn.bohrarc}
\mu_G(B_\eta) \geq \frac{1}{\pi}\arccos (1-2\eta^2) \geq \frac{1}{\pi}\min\{\eta,2\}.
\end{equation}
From this we immediately recover the usual lower bound on the size of a Bohr set with a larger frequency set from this and the preceding lemma.\footnote{To recover the bound in \cite[Lemma 4.20]{taovu::} some adjustments need to be made as our definition of a Bohr set is in terms of $\gamma(x)$ being close to $1$ rather than $\arg \gamma(x)$ being close to $0$.}

In \cite{bou::5} developed the idea of Bohr sets as approximate substitutes for groups, and since then his techniques have become an essential tool in additive combinatorics.  To begin with we define the \emph{entropy} of a Bohr set $B$ to be
\begin{equation*}
h(B):=\log \frac{\mu_G(B_{2})}{\mu_{G}(B_{1/2})}.
\end{equation*}
A trivial covering argument shows that $B_2$ can be covered by $\exp(h(B))$ translates of $B$, and if $B$ is actually a subgroup then $h(B)=0$.  It is often desirable to have a uniform bound on $h(B_\delta)$ for all $\delta \in (0,2]$, and such a bound is called the dimension of $B$ in other work.  Here, however, it is crucial that we do not insist on this.

We shall be particularly interested in Bohr sets which grow in a reasonably regular way because they will function well as approximate groups.  In light of the definition of entropy (which encodes growth over a fixed range) we say that a Bohr set $B$ is \emph{$C$-regular} if
\begin{equation*}
\frac{1}{1+Ch(B)|\eta|} \leq \frac{\mu_G(B_{1+\eta})}{\mu_G(B)} \leq 1+Ch(B)|\eta|
\end{equation*}
for all $\eta$ with $|\eta| \leq 1/Ch(B)$.  Crucially such Bohr sets are commonplace.
\begin{lemma}\label{lem.ubreg}
There is an absolute constant $C_\mathcal{R}$ such that if $B$ is a Bohr set then there is some $\lambda \in [1,2]$ such that $B_\lambda$ is $C_\mathcal{R}$-regular.
\end{lemma}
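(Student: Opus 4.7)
The proof is a classical pigeonhole argument, essentially due to Bourgain, applied to the non-decreasing function $F \colon (0,4] \to \mathbb{R}$ defined by $F(\lambda) := \log \mu_G(B_\lambda)$. Note that $h(B_\lambda) = F(2\lambda) - F(\lambda/2)$, and on taking logarithms of the two defining inequalities (using $\log(1+x) = x + O(x^2)$ for $|x| \leq 1/2$) the statement that $B_\lambda$ is $C$-regular is equivalent, up to an absolute constant, to the local Lipschitz-type bound
\begin{equation*}
|F(\lambda(1+\eta)) - F(\lambda)| \leq C' C\, h(B_\lambda)|\eta| \qquad \text{for all } |\eta| \leq 1/(C h(B_\lambda)).
\end{equation*}

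I would argue by contradiction: suppose that for a large absolute $C_\mathcal{R}$ to be chosen later, no $\lambda \in [1,2]$ is $C_\mathcal{R}$-regular. Then for each such $\lambda$ there is $\eta_\lambda \neq 0$ with $|\eta_\lambda| \leq 1/(C_\mathcal{R} h(B_\lambda))$ witnessing the failure: monotonicity of $F$ forces a jump of size at least $c\, C_\mathcal{R} h(B_\lambda)|\eta_\lambda|$ across an interval $J_\lambda \subset [1/2,4]$ with endpoints $\lambda$ and $\lambda(1+\eta_\lambda)$, whose length is at most $2|\eta_\lambda|$. The collection $\{J_\lambda\}_{\lambda \in [1,2]}$ covers $[1,2]$, so the Vitali covering lemma extracts a disjoint subfamily $J_{\lambda_1},\ldots,J_{\lambda_n}$ whose union has length bounded below by an absolute constant. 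Summing the positive $F$-increments across these disjoint intervals gives
\begin{equation*}
F(4) - F(1/2) \geq \sum_{i=1}^{n} |F(\lambda_i(1+\eta_{\lambda_i})) - F(\lambda_i)| \geq c'\, C_\mathcal{R} \inf_{\lambda \in [1,2]} h(B_\lambda).
\end{equation*}

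On the other hand, the telescoping chain
\begin{equation*}
F(4) - F(1/2) = [F(4) - F(1)] + [F(1) - F(1/2)] \leq h(B_2) + h(B_1) \leq 2 \sup_{\lambda \in [1,2]} h(B_\lambda)
\end{equation*}
bounds the left-hand side by twice the supremum of $h(B_\lambda)$ on $[1,2]$. Combining these two bounds yields $\sup h(B_\lambda) \geq (c'/2) C_\mathcal{R} \inf h(B_\lambda)$, which contradicts $C_\mathcal{R}$ being large provided these two extrema are comparable.

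The main obstacle is precisely this last proviso: since $h(B_\lambda) = F(2\lambda) - F(\lambda/2)$ is itself a difference of $F$-values, nothing a priori prevents it from oscillating as $\lambda$ ranges over $[1,2]$. I would handle this with a preliminary dyadic pigeonhole: replace $[1,2]$ by one of its dyadic sub-intervals of absolutely bounded length on which $\sup h(B_\lambda)/\inf h(B_\lambda)$ is an absolute constant. Such a sub-interval must exist, since otherwise $h(B_\lambda)$ would grow super-geometrically across consecutive dyadic scales of $\lambda$, forcing $F(4) - F(1/2)$ to be unbounded. Running the Vitali pigeonhole on the restricted sub-interval then produces the desired contradiction, with the loss absorbed into the final value of $C_\mathcal{R}$.
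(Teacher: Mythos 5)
Your overall strategy is the right one and is exactly what the paper's citation to Tao--Vu points at: look at the nondecreasing function $F(\lambda)=\log\mu_G(B_\lambda)$, interpret non-regularity as a local jump of $F$, and use a Vitali-type pigeonhole to show not every $\lambda\in[1,2]$ can exhibit such a jump. Your reformulation of $C$-regularity as a Lipschitz bound for $F$ and the telescoping bound $F(4)-F(1/2)\leq h(B_1)+h(B_2)$ are both correct. You have also correctly isolated the genuine subtlety of this particular version of the lemma: unlike in Tao--Vu, where the slope scale is the \emph{rank} $d$ (a fixed parameter), here the slope scale is $h(B_\lambda)$, which moves with $\lambda$, so the Vitali estimate compares $\sup_\lambda h(B_\lambda)$ against $\inf_\lambda h(B_\lambda)$ and the argument needs these to be comparable.

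The gap is in the proposed repair. The claim that some dyadic sub-interval of absolutely bounded length must have $\sup h(B_\lambda)/\inf h(B_\lambda)$ bounded, ``since otherwise $h(B_\lambda)$ would grow super-geometrically across consecutive dyadic scales,'' is not correct. Write $h(B_\lambda)=F(2\lambda)-F(\lambda/2)$: this is a \emph{difference} of two nondecreasing functions and is therefore not monotone in $\lambda$. Failing to find a good sub-interval at some dyadic scale tells you nothing about growth of $h$; in fact $h(B_\lambda)$ can decrease, and indeed drop to $0$, as $\lambda$ moves across $[1,2]$ (for instance when all of the variation of $F$ on $[1/2,4]$ is concentrated just above $1/2$, so that $F$ is constant on $[\lambda/2,2\lambda]$ for all $\lambda$ sufficiently above $1$). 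In such situations there is no sub-interval of $[1,2]$ of absolutely bounded length on which $\inf h(B_\lambda)>0$ and the ratio is bounded: the sub-intervals either contain the discontinuity of $h$ (ratio $\infty$) or have $h\equiv 0$, in which case the lower bound on the jump across $J_\lambda$ is the useless $c\,C_{\mathcal R}\cdot 0\cdot|\eta_\lambda|=0$ and no contradiction follows. And if the only usable sub-interval is tiny, the Vitali extraction produces disjoint intervals whose total length is small, so the inequality $F(4)-F(1/2)\geq c'C_{\mathcal R}\inf h\cdot(\text{total length})$ no longer forces $C_{\mathcal R}$ to be bounded.

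So, as written, the reduction to a sub-interval with comparable extrema is unjustified and appears to fail on natural examples. You would need a different way to handle the non-uniformity of $h(B_\lambda)$ --- for example, running the Vitali argument against the uniform majorant $H:=F(4)-F(1/2)\geq h(B_\lambda)$ and then showing directly that the resulting $\lambda^*$ has $h(B_{\lambda^*})\gtrsim H$ (which itself needs an argument), or exploiting the additional structural fact that $h(B_\lambda)=0$ forces $B_{\lambda/2}$ to be a subgroup and $B_\rho$ to be constant on $[\lambda/2,2\lambda]$. The present proposal correctly names the obstacle but does not actually remove it.
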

The proof is by a covering argument and follows, for example, \cite[Lemma 4.24]{taovu::}.  From now on we say that a Bohr set $B$ is \emph{regular} if it is $C_\mathcal{R}$-regular.

Finally, we write $\beta_\rho$ for the probability measure induced on $B_\rho$ by $\mu_G$, and $\beta$ for $\beta_1$.  These measures function as approximate analogues for Haar measure, and the following useful lemma of Green and Konyagin \cite{grekon::} shows how they can used to describe a sensible version of the annihilator of a Bohr set.
\begin{lemma}\label{lem.nest}
Suppose that $B$ is a regular Bohr set.  Then
\begin{equation*}
\{\gamma:|\wh{\beta}(\gamma)| \geq \kappa\} \subset \{\gamma: |1-\gamma(x)|=O(h(B)\kappa^{-1}\rho) \textrm{ for all } x \in B_\rho\}.
\end{equation*}
\end{lemma}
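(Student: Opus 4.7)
The plan is to convert the hypothesis on $\wh\beta(\gamma)$ into a statement about how stable the measure $\beta$ is under translation by $x$. A direct computation of Fourier coefficients together with the triangle inequality gives
\begin{equation*}
|1-\gamma(x)|\,|\wh\beta(\gamma)| = \left|\sum_{y \in G}(\beta(y)-\beta(y-x))\overline{\gamma(y)}\right| \leq \frac{\mu_G(B \triangle (B+x))}{\mu_G(B)},
\end{equation*}
so under the hypothesis $|\wh\beta(\gamma)|\geq \kappa$ it suffices to show that $\mu_G(B\triangle(B+x))\leq O(h(B)\rho)\mu_G(B)$ uniformly for $x\in B_\rho$.

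To this end the first step is the set-theoretic containment $B\triangle(B+x)\subset B_{1+\rho}\setminus B_{1-\rho}$ whenever $x \in B_\rho$. The inclusion $(B+x)\setminus B\subset B_{1+\rho}$ comes from applying the triangle inequality $|1-\gamma(z+x)|\leq |1-\gamma(z)|+|1-\gamma(x)|\leq (1+\rho)\delta_\gamma$ to each $\gamma$ in the frequency set of $B$. Conversely, if $y\in B_{1-\rho}$ then $|1-\gamma(y-x)|\leq (1-\rho)\delta_\gamma+\rho\delta_\gamma=\delta_\gamma$, so $y\in B+x$; this yields $B\setminus(B+x)\subset B\setminus B_{1-\rho}$, and the two inclusions combine as claimed.

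The second step is to invoke the regularity of $B$: provided $\rho\leq 1/C_\mathcal{R}h(B)$ the definition of $C_\mathcal{R}$-regularity immediately gives $\mu_G(B_{1+\rho})-\mu_G(B_{1-\rho})=O(h(B)\rho)\mu_G(B)$, whence $|1-\gamma(x)|=O(h(B)\rho/\kappa)$ as claimed. For $\rho$ outside this range $h(B)\rho$ is already bounded below by an absolute constant, so the trivial estimate $|1-\gamma(x)|\leq 2$ together with $\kappa\leq 1$ (since $|\wh\beta(\gamma)|\leq 1$) already delivers the conclusion after adjusting the implied constant. No step is genuinely difficult; the only thing requiring some care is setting up the Fourier identity so as to cleanly extract the factor $|1-\gamma(x)|$, after which the rest is routine manipulation of the regularity definition.
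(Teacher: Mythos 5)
Your proof follows essentially the same route as the paper's: extract the factor $|1-\gamma(x)|$ via the Fourier identity, bound the resulting difference by $\mu_G(B\triangle(B+x))/\mu_G(B)$, contain this in $B_{1+\rho}\setminus B_{1-\rho}$, and finish with the regularity estimate. The explicit check for $\rho>1/C_{\mathcal{R}}h(B)$ is a minor detail the paper absorbs into the $O$-notation, but otherwise the arguments coincide.
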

\begin{proof}
First, suppose that $|\wh{\beta}(\gamma)| \geq \kappa$ and $y \in B_\rho$.  Then
\begin{equation*}
|1-\gamma(y)|\kappa \leq |\int{\gamma(x)d\beta(x)} - \int{\gamma(x+y)d\beta(x)}|\leq \frac{\mu_G(B_{1+\rho}\setminus B_{1-\rho})}{\mu_G(B_1)} = O(h(B)\rho)
\end{equation*}
provided $\rho \leq 1/C_\mathcal{R}h(B)$.  The result is proved.
\end{proof}

\section{The large spectrum and Chang's theorem}\label{sec.relchang}

Given a probability measure $\mu$, a function $f \in L^1(\mu)$ and a parameter $\epsilon \in (0,1]$ we define the \emph{$\epsilon$-spectrum of $f$ w.r.t. $\mu$} to be the set
\begin{equation*}
\Spec_\epsilon(f,\mu):=\{\gamma \in \wh{G}: |(fd\mu)^\wedge(\gamma)| \geq \epsilon\|f\|_{L^1(\mu)}\}.
\end{equation*}
This definition extends the usual one from the case $\mu=\mu_G$.  We shall need a local version of a result of Chang \cite{cha::0} for estimating the `complexity' or `entropy' of the large spectrum.  

Given a set of characters $\Lambda$ and a function $\omega:\Lambda \rightarrow D:=\{z \in \C: |z| \leq 1\}$ we define
\begin{equation*}
p_{\omega,\Lambda}:=\prod_{\lambda \in \Lambda}{(1+\Re \omega(\lambda)\lambda)},
\end{equation*}
and call such a function a \emph{Riesz product for $\Lambda$}.  It is easy to see that all Riesz products are real non-negative functions.  They are at their most useful when they also have mass close to $1$: the set $\Lambda$ is said to be \emph{$K$-dissociated w.r.t. $\mu$} if
\begin{equation*}
\int{p_{\omega,\Lambda}d\mu} \leq \exp(K) \textrm{ for all } \omega:\Lambda \rightarrow D.
\end{equation*}
In particular, being $0$-dissociated w.r.t. $\mu_G$ is the usual definition of being dissociated.  This relativised version of dissociativity has a useful monotonicity property.
\begin{lemma}[Monotonicity of dissociativity]
Suppose that $\mu'$ is another probability measure, $\Lambda$ is $K$-dissociated w.r.t. $\mu$, $\Lambda' \subset \Lambda$ and $K' \geq K$.  Then $\Lambda'$ is $K'$-dissociated w.r.t. $\mu' \ast \mu$.
\end{lemma}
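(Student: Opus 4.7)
The plan is to reduce the problem in two routine steps: first, to eliminate the distinction between $\Lambda'$ and $\Lambda$, and second, to peel off the auxiliary measure $\mu'$ via Fubini. For the first step, given any $\omega : \Lambda' \to D$, I would extend it to $\tilde\omega : \Lambda \to D$ by setting $\tilde\omega \equiv 0$ on $\Lambda \setminus \Lambda'$. The factors thus introduced are all of the form $1 + \Re(0 \cdot \lambda) = 1$, so $p_{\omega, \Lambda'} = p_{\tilde\omega, \Lambda}$, and it therefore suffices to prove
\begin{equation*}
\int p_{\tilde\omega, \Lambda} \, d(\mu' \ast \mu) \leq \exp(K).
\end{equation*}

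The key observation---and really the only content of the lemma---is that translation acts on Riesz products by twisting the weight function rather than the indexing set. Explicitly, for each fixed $x \in G$, using that $\lambda$ is a character,
\begin{equation*}
p_{\tilde\omega, \Lambda}(x+y) = \prod_{\lambda \in \Lambda}\bigl(1 + \Re(\tilde\omega(\lambda)\lambda(x)\lambda(y))\bigr) = p_{\tilde\omega_x, \Lambda}(y),
\end{equation*}
where $\tilde\omega_x(\lambda) := \tilde\omega(\lambda)\lambda(x)$ still takes values in $D$ because $|\lambda(x)|=1$. Armed with this identity, Fubini's theorem applied in the form $\int f \, d(\mu' \ast \mu) = \int \! \int f(x+y) \, d\mu(y) \, d\mu'(x)$ gives
\begin{equation*}
\int p_{\tilde\omega, \Lambda} \, d(\mu' \ast \mu) = \int \! \int p_{\tilde\omega_x, \Lambda}(y) \, d\mu(y) \, d\mu'(x) \leq \int \exp(K) \, d\mu'(x) = \exp(K) \leq \exp(K'),
\end{equation*}
the three nontrivial steps being the $K$-dissociativity of $\Lambda$ w.r.t.\ $\mu$ (applied pointwise in $x$), the fact that $\mu'$ is a probability measure, and the hypothesis $K' \geq K$.

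There is no real obstacle here: the whole statement is a consequence of the translation-covariance identity above, which says that a Riesz product for $\Lambda$ is, after translation by any $x \in G$, still a Riesz product for $\Lambda$ with a possibly different choice of weights. If anything warrants care it is simply writing down this identity and checking that the modified weights remain in the disc $D$, since everything else is a one-line averaging argument.
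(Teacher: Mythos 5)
Your proof is correct. The paper states this lemma without proof, treating it as routine, and your argument is precisely the standard one: extend $\omega$ by zero to $\Lambda$ so the extra factors in the Riesz product are trivially $1$; observe that translating by $x$ twists the weight function by the unimodular factor $\lambda(x)$, so $p_{\tilde\omega,\Lambda}(x+\cdot)=p_{\tilde\omega_x,\Lambda}$ is again a Riesz product for $\Lambda$ with weights in $D$; then apply $K$-dissociativity w.r.t.\ $\mu$ pointwise in $x$ and average over the probability measure $\mu'$. All three steps check out, and the inequality $K'\geq K$ is used only at the very end, as it should be.
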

Conceptually the next definition is inspired by the discussion of quadratic rank Gowers and Wolf give in \cite{gowwol::}.  The \emph{$(K,\mu)$-relative entropy} of a set $\Gamma$ is the size of the largest subset $\Lambda\subset \Gamma$ such that $\Lambda$ is $K$-dissociated w.r.t. $\mu$.
\begin{lemma}[The Chang bound, {\cite[Lemma 4.6]{san::01}}]\label{lem.changbd}  Suppose that $0 \not \equiv f \in L^2(\mu)$ and write $L_f:=\|f\|_{L^2(\mu)}\|f\|_{L^1(\mu)}^{-1}$.  Then the set $\Spec_\epsilon(f,\mu)$ has $(1,\mu)$-relative entropy $O( \epsilon^{-2}\log 2L_f)$.
\end{lemma}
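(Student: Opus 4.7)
\emph{Strategy.} Let $\Lambda \subset \Spec_\epsilon(f,\mu)$ be a $1$-dissociated (w.r.t.\ $\mu$) subset of characters of size $k$; we show that $k = O(\epsilon^{-2}\log 2L_f)$. The plan mirrors Chang's classical Riesz product argument, but replaces the Parseval step with a relative Rudin-type moment inequality that comes out of the relativised dissociativity hypothesis.

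\emph{Linearisation.} Align the Fourier coefficients of $f d\mu$ on $\Lambda$: set $\omega(\lambda) := \overline{(fd\mu)^\wedge(\lambda)}/|(fd\mu)^\wedge(\lambda)|$ so that $\omega(\lambda) (fd\mu)^\wedge(\lambda) = |(fd\mu)^\wedge(\lambda)|$, and define $h := \sum_{\lambda \in \Lambda} \overline{\omega(\lambda)}\lambda$. Then
\begin{equation*}
k \epsilon \|f\|_{L^1(\mu)} \leq \sum_{\lambda \in \Lambda} |(fd\mu)^\wedge(\lambda)| = \int f \overline{h}\, d\mu
\end{equation*}
by the definition of the $\epsilon$-spectrum.

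\emph{The main ingredient.} We claim $\|h\|_{L^p(\mu)} = O(\sqrt{pk})$ for every $p \geq 2$. Fix $\alpha \in (0,1/2]$; the weight $\alpha\overline{\omega}:\Lambda \to D$ is admissible for the dissociativity hypothesis, so $\int p_{\alpha\overline{\omega},\Lambda}\, d\mu \leq e$. Since $|\alpha \Re(\overline{\omega(\lambda)}\lambda(x))| \leq 1/2$ for each factor, the elementary inequality $1 + y \geq \exp(y - y^2)$ (valid for $|y| \leq 1/2$), applied factor by factor, gives
\begin{equation*}
p_{\alpha\overline{\omega},\Lambda}(x) \geq \exp\!\left(\alpha \sum_{\lambda \in \Lambda} \Re(\overline{\omega(\lambda)}\lambda(x)) - \alpha^2 k\right) = \exp(\alpha \Re h(x) - \alpha^2 k).
\end{equation*}
Integrating, $\int \exp(\alpha \Re h)\, d\mu \leq \exp(1 + \alpha^2 k)$; applying the same reasoning with $-\overline{\omega}$ and $\pm i \overline{\omega}$ in place of $\overline{\omega}$ yields matching bounds for $\pm \alpha \Re h$ and $\pm \alpha \Im h$. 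Standard Chernoff-then-layer-cake arguments then extract the claimed sub-Gaussian $L^p$-bound on $h$.

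\emph{Conclusion.} For $p \geq 2$ with conjugate exponent $p' = p/(p-1) \in [1,2]$, H\"older's inequality together with the log-convexity estimate $\|f\|_{L^{p'}(\mu)} \leq \|f\|_{L^1(\mu)}^{1-2/p}\|f\|_{L^2(\mu)}^{2/p}$ gives
\begin{equation*}
k \epsilon \|f\|_{L^1(\mu)} \leq \|f\|_{L^{p'}(\mu)}\|h\|_{L^p(\mu)} \leq \|f\|_{L^1(\mu)}\, L_f^{2/p} \cdot O(\sqrt{pk}),
\end{equation*}
which on squaring and rearranging becomes $k \leq O(p\epsilon^{-2} L_f^{4/p})$. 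The choice $p := 2 + 4\log L_f$ makes $L_f^{4/p} = O(1)$ and produces $k = O(\epsilon^{-2}\log 2L_f)$ as required. The principal difficulty is the relative Rudin-type $L^p$-bound on $h$; once it is in hand (via the $1 + x \geq e^{x-x^2}$ trick inside the Riesz product), the rest is standard H\"older-plus-interpolation bookkeeping.
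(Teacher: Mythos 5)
Your proof is correct and takes essentially the same route as the reference the paper cites: a Chernoff/Rudin-type $L^p$ bound for the linearised dual function $h$ derived from the Riesz-product definition of relative dissociativity (via $1+y\geq e^{y-y^2}$ on $|y|\leq 1/2$), followed by Chang's standard H\"older-plus-log-convexity step with $p\sim\log 2L_f$.
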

The proof of this goes by a Chernoff-type estimate, the argument for which follows \cite[Proposition 3.4]{greruz::0}, and then the usual argument of Chang from \cite{cha::0}.

Although Chang's theorem cannot be significantly improved (see \cite{gre::5}, and \cite{gre::6} for a discussion), there are some small refinements and discussions of their limitations in the work \cite{shk::,shk::2} and \cite{shk::4} of Shkredov.

Low entropy sets of characters are majorised by large Bohr sets, a fact encoded in the following lemma.  The proof is a minor variant of \cite[Lemma 6.3]{san::01}.
\begin{lemma}[Annihilating dissociated sets]\label{lem.majdissoc}
Suppose that $B$ is a regular Bohr set and $\Delta$ is a set of characters with $(\eta,\beta)$-relative entropy $k$.  Then there is a set $\Lambda$ of size at most $k$ and a $\rho=\Omega(\eta/(1+h(B))(k+\log2\eta^{-1}))$, such that for all $\gamma \in \Delta$ we have
\begin{equation*}
|1-\gamma(x)| =O(k \nu + \rho' \rho^{-1} h(B_{\rho})) \textrm{ for all }x \in B_{\rho'} \wedge B'_\nu, \rho',\nu  \in \R^+
\end{equation*}
where $B'$ is the Bohr set with constant width function $2$ and frequency set $\Lambda$.
\end{lemma}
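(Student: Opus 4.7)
The plan is to take $\Lambda \subset \Delta$ to be a \emph{maximum} $(\eta, \beta)$-dissociated subset, which by hypothesis has size at most $k$, and to let $B'$ be the Bohr set with frequency set $\Lambda$ and constant width $2$ as in the statement. The case $\gamma \in \Lambda$ is immediate: for $x \in B'_\nu$ one has directly $|1 - \gamma(x)| \leq 2\nu = O(k\nu)$. The real work is the case $\gamma \in \Delta \setminus \Lambda$, where maximality of $\Lambda$ forces $\Lambda \cup \{\gamma\}$ to fail $(\eta, \beta)$-dissociation and so produces a weight $\omega \colon \Lambda \cup \{\gamma\} \to D$ with $\int p_{\omega, \Lambda \cup \{\gamma\}}\, d\beta > \exp(\eta)$.

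The central step is to convert this failure into a structural identity
\[\gamma = \xi \cdot \prod_{i=1}^{j} \lambda_i^{\varepsilon_i}, \qquad \lambda_i \in \Lambda,\ \varepsilon_i \in \{\pm 1\},\ j \leq k,\]
with $|\wh{\beta_\rho}(\xi)| = \Omega(1)$ for an appropriate $\rho = \Omega(\eta / (h(B)(k + \log 2\eta^{-1})))$.  Conceptually, one expands the Riesz product as a sum $\sum_{\vec\varepsilon} c_{\vec\varepsilon}\,\wh\beta(\chi_{\vec\varepsilon})$ over signed sub-products of $\Lambda \cup \{\gamma\}$, separating the terms with $\varepsilon_\gamma = 0$ (whose total is bounded by $\exp(\eta)$ by the dissociation of $\Lambda$ alone) from those with $\varepsilon_\gamma = \pm 1$, which must then account for a positive excess.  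A Chernoff/Riesz-product argument in the spirit of Lemma \ref{lem.changbd} extracts from this excess one specific character of the form $\gamma \prod \lambda^{\pm 1}$ whose $\wh\beta$-coefficient is bounded below in absolute value.  Regularity of $B$ at scale $\rho$ next transfers this lower bound from $\wh\beta$ to $\wh{\beta_\rho}$, and the factors $h(B)$, $k$ and $\log 2\eta^{-1}$ all enter $\rho^{-1}$ through this transfer: $k$ controls the simultaneous count of candidate signed products $\gamma \prod \lambda^{\pm 1}$, $\log 2\eta^{-1}$ is the Chernoff-tail threshold, and $h(B)$ is the cost of regularity.

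Given such an identity, the conclusion follows by combining the two constraints on $x \in B_{\rho'} \wedge B'_\nu$.  Lemma \ref{lem.nest} applied to $B_\rho$ yields $|1 - \xi(x)| = O(h(B_\rho)\rho'\rho^{-1})$ for $x \in B_{\rho'}$, while the pointwise bounds $|1 - \lambda(x)| \leq 2\nu$ for $\lambda \in \Lambda$ and $x \in B'_\nu$ telescope multiplicatively (using $|1-ab| \leq |1-a|+|1-b|$ for unit-modulus $a,b$) to give $|1 - \prod_{i=1}^{j} \lambda_i(x)^{\varepsilon_i}| \leq 2j\nu \leq 2k\nu$.  Writing $1 - \gamma(x) = (1 - \xi(x))\prod_i \lambda_i(x)^{\varepsilon_i} + (1 - \prod_i \lambda_i(x)^{\varepsilon_i})$ and applying the triangle inequality finishes the argument.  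The principal obstacle is the extraction step of the middle paragraph: since the dissociation of $\Lambda$ only yields $\int p_{\omega|_\Lambda, \Lambda}\, d\beta \leq \exp(\eta)$, the $\gamma$-bearing excess may in principle be as small as $o(1)$, so a careful concentration argument is required to isolate a single dominant Fourier coefficient from it and to transport that coefficient to a Bohr dilate at which Lemma \ref{lem.nest} can be applied.
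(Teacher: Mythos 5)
Your high-level scaffold is right — pick a dissociated $\Lambda$, from the failure of dissociativity when you adjoin $\gamma$ extract a character of the form $\gamma - \lambda$ (with $\lambda$ a word in $\Lambda$) having a large Fourier coefficient at some narrow Bohr scale, apply Lemma \ref{lem.nest}, and telescope via the triangle inequality — and your closing paragraph correctly diagnoses where the difficulty lies. But you have not resolved that difficulty, and it is the whole content of the lemma. Two ideas in the paper's proof, both absent from your sketch, are what make the argument go.

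First, $\Lambda$ is \emph{not} taken to be a maximal $(\eta,\beta)$-dissociated subset. If it were, then for $\gamma \in \Delta\setminus\Lambda$ you know $\int p_{\omega,\Lambda\cup\{\gamma\}}\,d\beta > \exp(\eta)$ for some weight, while $\Lambda$ alone gives $\int p_{\omega|_\Lambda,\Lambda}\,d\beta \leq \exp(\eta)$ — and as you note, the difference may be arbitrarily small, so no Chernoff-type argument can extract a quantitatively large coefficient from it. The paper circumvents this by building $\Lambda$ iteratively with a \emph{ladder} of thresholds $\eta_i := i\eta/2(k+1)$: one finds an $i \leq k$ and a set $\Lambda = \Lambda_i$ which is $\eta_i$-dissociated but such that $\Lambda_i \cup \{\gamma\}$ fails to be $\eta_{i+1}$-dissociated for every $\gamma$ in $\Delta\setminus\Lambda$. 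This forces a guaranteed gap
\begin{equation*}
\Bigl|\int p_{\omega,\Lambda}\,\overline{\gamma}\,d\beta^+\Bigr| \geq \exp(\eta_{i+1})-\exp(\eta_i) \geq \frac{\eta}{2(k+1)},
\end{equation*}
which is exactly the lower bound your "Chernoff/Riesz-product argument" would need but cannot supply from maximality alone.

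Second, even with the gap, the Plancherel step only yields $|\wh{\beta^{+}}(\gamma-\lambda)| \gtrsim \eta/(2(k+1)3^{k})$ for some $\lambda \in \Span(\Lambda)$, which is exponentially small in $k$; there is no regularity trick that transfers a coefficient this small from $\wh{\beta}$ to $|\wh{\beta_\rho}(\cdot)| \geq 1/2$ at a usable scale $\rho$. The paper therefore works throughout with a smoothed measure
\begin{equation*}
\beta^{+}:=\beta_{1+L\rho}\ast \underbrace{\beta_\rho\ast\dots\ast\beta_\rho}_{L},
\end{equation*}
chosen so that $|\wh{\beta^{+}}(\xi)| \leq |\wh{\beta_\rho}(\xi)|^{L}$; with $L = \lceil \log_2 3^{k}2(k+1)\eta^{-1}\rceil$ the exponentially small lower bound becomes $|\wh{\beta_\rho}(\gamma-\lambda)|\geq 1/2$ upon taking an $L$-th root. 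Regularity of $B$ is used only to ensure $\beta\leq(1+\eta/3)\beta^{+}$ pointwise (so that dissociativity w.r.t.\ $\beta^{+}$ still forces $|\Lambda|\leq k$) and to land on a $\rho$ at which $B_\rho$ is itself regular — not to transport a small Fourier coefficient. This is also where the factor $h(B)(k+\log 2\eta^{-1})$ in $\rho^{-1}$ comes from: it is $h(B)L$, the cost of keeping the $L$-fold convolution comparable to $\beta$. Without both the ladder and the convolutional smoothing, the extraction step you flagged as the "principal obstacle" does not close.
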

\begin{proof}
Let $L:=\lceil \log_2 3^k2(k+1)\eta^{-1}\rceil$, the reason for which choice will become apparent, and define
\begin{equation*}
\beta^+:=\beta_{1+L{\rho}} \ast \beta_{\rho} \ast \dots \ast \beta_{\rho},
\end{equation*}
where $\beta_{\rho}$ occurs $L$ times in the expression.  By regularity (of $B$) we can pick some ${\rho}\in (\Omega(\eta/(1+h(B))L),1]$ such that $B_{\rho}$ is regular and we have the point-wise inequality
\begin{equation*}
\beta \leq \frac{\mu_G(B_{1+L{\rho}})}{\mu_G(B)} \beta^+ \leq (1+\eta/3)\beta^+.
\end{equation*}
It follows that if $\Lambda$ is $\eta/2$-dissociated w.r.t. $\beta^+$ then $\Lambda$ is $\eta$-dissociated w.r.t. $\beta$, and hence $\Lambda$ has size at most $k$.  From now on all dissociativity will be w.r.t. $\beta^+$. 

We put $\eta_i:=i\eta/2(k+1)$ and begin by defining a sequence of sets $\Lambda_0,\Lambda_1,\dots$ iteratively such that $\Lambda_i$ is $\eta_i$-dissociated.  We let $\Lambda_0:=\emptyset$ which is easily seen to be $0$-dissociated.  Now, suppose that we have defined $\Lambda_i$ as required.  If there is some $\gamma \in \Delta\setminus \Lambda_i$ such that $\Lambda_i \cup \{\gamma\}$ is $\eta_{i+1}$-dissociated then let $\Lambda_{i+1}:=\Lambda_i \cup \{\gamma\}$.  Otherwise, terminate the iteration.

Note that for all $i \leq k+1$, if the set $\Lambda_i$ is defined then it is certainly $\eta/2$-dissociated and so $|\Lambda_i| \leq k$.  However, if the iteration had continued for $k+1$ steps then $|\Lambda_{k+1}| >k$.  This contradiction means that there is some $i\leq k$ such that $\Lambda:=\Lambda_i$ is $\eta_i$-dissociated and $\Lambda_i \cup \{\gamma\}$ is not $\eta_{i+1}$-dissociated for any $\gamma \in \Delta \setminus \Lambda_i$.

It follows that we have a set $\Lambda$ of at most $k$ characters such that for all $\gamma \in \Delta \setminus \Lambda$ there is a function $\omega:\Lambda \rightarrow D$ and $\nu \in D$ such that
\begin{equation*}
\int{p_{\omega,\Lambda}(1+\Re \nu \gamma)d\beta^+} > \exp(\eta_{i+1}).
\end{equation*}

Now, suppose that $\gamma \in \Delta$.  If $\gamma \in \Lambda$ then the conclusion is immediate, so we may assume that $\gamma \in \Delta \setminus \Lambda$.  Then, since $\Lambda$ is $\eta_i$-dissociated, we see that
\begin{equation*}
|\int{p_{\omega,\Lambda}\overline{\gamma} d\beta^+}|  >  \exp(\eta_{i+1})- \exp(\eta_i) \geq \frac{\eta}{2(k+1)}.
\end{equation*}
Applying Plancherel's theorem we get that
\begin{equation*}
 \frac{\eta}{2(k+1)} \leq \left|\sum_{\lambda \in \Span(\Lambda)}{\wh{p_{\omega,\Lambda}}(\lambda)\wh{\beta^+}(\gamma-\lambda)}\right|\leq 3^k\sup_{\lambda \in \Span(\Lambda)}{|\wh{\beta_\rho}(\gamma-\lambda)|^L}.
\end{equation*}
Given the choice of $L$ there is some $\lambda \in \Span(\Lambda)$ such that $|\wh{\beta_{\rho}}(\gamma-\lambda)| \geq 1/2$.  By  Lemma \ref{lem.nest} we see that
\begin{equation*}
\gamma-\lambda \in \{\gamma': |1-\gamma'(x)| = O(\rho'' h(B_\rho)) \textrm{ for all }x \in (B_{\rho})_{\rho''}\}.
\end{equation*}
On the other hand, by the triangle inequality if $\lambda \in \Span(\Lambda)$ then
\begin{equation*}
\lambda \in \{\gamma': |1-\gamma'(x)| \leq k \nu \textrm{ for all }x \in B'_\nu\},
\end{equation*}
and the result follows from a final application of the triangle inequality.
\end{proof}

\section{Containment in a Bohr set}\label{sec.bohrcont}

The object of this section is to show the following result.
\begin{proposition}\label{prop.cont}
Suppose that $G$ is a finite (compact) Abelian group, $d \geq 1$ and $X$ is a finite subset of $G$ with $\mu_G(nX) \leq n^d\mu_G(X)$ for all $n \geq 1$ and $\kappa \in (0,1]$ is a parameter.  Then there is a regular Bohr set $B$ such that
\begin{equation*}
X-X \subset B_\kappa \textrm{ and } \mu_G(B_2) \leq \exp(O(d\log 2d\kappa^{-1}))\mu_G(X).
\end{equation*}
\end{proposition}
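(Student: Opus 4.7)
The argument should combine the polynomial growth hypothesis with the relative Chang bound (Lemma~\ref{lem.changbd}) and the dissociativity-to-Bohr-set conversion (Lemma~\ref{lem.majdissoc}).  The idea is to take the frequency set of $B$ to be (essentially) a Chang-dissociated subset of $\Spec_{1/2}(\mu_X)$ and to choose widths so that $X-X\subset B_\kappa$ holds automatically; the size of this dissociated set, controlled via polynomial growth, then gives the volume bound for $B_2$.

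Translate $X$ so that $0\in X$, whence $X\subset nX$ for every $n\ge 1$.  I would first extract an auxiliary regular Bohr set $B^{(0)}$ containing $X$ with $\mu_G(B^{(0)})/\mu_G(X)\le \exp(O(d\log d\kappa^{-1}))$: the content of this step is comparable to the proposition itself, and I expect either a bootstrap (an iterative refinement in which $|B^{(i)}|/|X|$ halves at each step, with the polynomial growth bound $|nX|\le n^d|X|$ capping the total number of iterations) or a direct Bogolyubov-style construction applied to $nX$.  Given such $B^{(0)}$, set $f:=(|B^{(0)}|/|X|) 1_X$; then $f d\beta^{(0)} = d\mu_X$, $\|f\|_{L^1(\beta^{(0)})}=1$, and $\|f\|_{L^2(\beta^{(0)})}^2 = |B^{(0)}|/|X|$, so Lemma~\ref{lem.changbd} bounds the $(1,\beta^{(0)})$-relative entropy of $\Spec_\epsilon(\mu_X,\beta^{(0)}) = \{\gamma:|\widehat{\mu_X}(\gamma)|\ge\epsilon\}$ by $O(\epsilon^{-2} d\log d\kappa^{-1})$.

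Applying Lemma~\ref{lem.majdissoc} with $\Delta:=\Spec_\epsilon(\mu_X)$ and $B=B^{(0)}$ then produces a set $\Lambda$ of size $O(\epsilon^{-2}d\log d\kappa^{-1})$ and a finer Bohr set $B:=(B^{(0)})_{\rho'}\wedge B'_\nu$ on which every $\gamma\in\Spec_\epsilon(\mu_X)$ is $\kappa$-trivial.  To promote this to $X-X\subset B_\kappa$, I would expand $\mu_X\ast\mu_{-X}$ via its Fourier coefficients $|\widehat{\mu_X}|^2$: the contribution from $\Spec_\epsilon^c$ is $O(\epsilon^2/\mu_G(X))$ in $L^2$ by Parseval, while the contribution from $\Spec_\epsilon$ is essentially constant on $B_\kappa$ by the previous paragraph.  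Choosing $\epsilon$ a suitable small power of $\kappa$, and using $\mu_X\ast\mu_{-X}(x)\ge 1/(|X|\mu_G(X))$ for $x\in X-X$, forces $x\in B_\kappa$.  The volume bound $\mu_G(B_2)\le \exp(O(|\Lambda|))\mu_G(B^{(0)})\le \exp(O(d\log d\kappa^{-1}))\mu_G(X)$ is then standard, and Lemma~\ref{lem.ubreg} provides the regular dilate.

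The main obstacle is the auxiliary step of producing $B^{(0)}$, which is essentially a weaker form of the proposition itself: I expect it to require either a bootstrap construction at a coarser scale or a careful direct use of the polynomial growth.  A secondary delicacy is the pointwise passage from ``spectrum annihilated on $B_\kappa$'' to ``support of $\mu_X\ast\mu_{-X}$ contained in $B_\kappa$'', which must survive the tail errors in the Fourier expansion and is what ultimately dictates the dependence $\log d\kappa^{-1}$ in the final dimension.
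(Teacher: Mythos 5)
Your plan has two structural defects that make it circular rather than a proof, plus a step that would fail analytically.

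First, the circularity: your Step 1, producing a regular Bohr set $B^{(0)}$ containing $X$ with $\mu_G(B^{(0)}) \leq \exp(O(d\log d\kappa^{-1}))\mu_G(X)$, is not a weaker preliminary — it is essentially the statement of the proposition (and you say as much). If you could produce such a $B^{(0)}$, the remaining work would just be to regularise it, so the bootstrap you gesture at would need to be the whole argument. The paper instead produces the Bohr set directly from the polynomial growth hypothesis, using an idea of Schoen: by pigeonhole there is some $l = O(d\log d)$ with $\mu_G(lX) \leq 2\mu_G((l-1)X)$, and then Cauchy–Schwarz plus Parseval for high convolution powers of $1_{lX}$ shows that the \emph{near-maximal} spectrum $\Gamma := \Spec_{1-\epsilon}(1_{lX})$ (with $\epsilon \sim \kappa$) carries almost all the Fourier $\ell^{2k}$-mass, and that the Bohr set $B'$ with frequency set $\Gamma$ and width $1/2$ satisfies $\mu_G(B') \leq (kl)^d\mu_G(lX) \leq \exp(O(d\log d\kappa^{-1}))\mu_G(X)$. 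No prior Bohr set is needed.

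Second, you have identified the wrong spectral object and the wrong lemma. You work with the large spectrum $\Spec_\epsilon(\mu_X)$ and propose to feed it through Lemma~\ref{lem.changbd} and Lemma~\ref{lem.majdissoc} — but those are the tools used \emph{afterwards}, in the proof of Theorem~\ref{thm.core}, once the Bohr set has already been obtained. Proposition~\ref{prop.cont} itself rests on the near-maximal spectrum of $1_{lX}$ and on Lemma~\ref{lem.grt}: if $|\wh{1_{A+S}}(\gamma)| \geq (1-\epsilon)\mu_G(A+S)$ and $\mu_G(A+S) \leq K\mu_G(A)$, then $|1 - \gamma(s)| \leq 2^3 K\epsilon$ for all $s \in S-S$. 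Applied with $A = (l-1)X$, $S = X$ (so $K=2$), this gives $X-X \subset B'_{\kappa/4}$ \emph{pointwise and unconditionally}, with no error terms to control. That is precisely the input your Step 4 is missing.

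Third, even granting $B^{(0)}$, the containment argument you sketch in Step 4 would not close. You want to deduce $x \in B_\kappa$ for $x \in X-X$ from $\mu_X\ast\mu_{-X}(x) \geq 1/(|X|\mu_G(X))$ together with an $L^2$ bound $O(\epsilon^2/\mu_G(X))$ on the tail of the Fourier expansion. An $L^2$ bound on a sum of Fourier modes says nothing pointwise: the tail could concentrate its mass at the single point $x$ and destroy the estimate. To make a Bogolyubov-style argument of this shape work pointwise you would need $\ell^1$ control on the tail (as in classical Bogolyubov for $2A-2A$), and here you would get at best a containment of a quadruple difference set, not of $X-X$. The whole point of using the near-maximal spectrum via Lemma~\ref{lem.grt} is that it gives a clean pointwise implication with no tail at all.

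So: the paper's argument is (i) pigeonhole on $l$ using growth, (ii) Lemma~\ref{lem.grt} gives $X-X \subset B'_{\kappa/4}$, (iii) Cauchy–Schwarz/Parseval with $1_{lX}^{(k)}$ bounds $\mu_G(B')$, (iv) Lemma~\ref{lem.ubreg} for regularity. Your proposal replaces (i)–(iii) with an assumed object and an $L^2$ argument that would not give pointwise containment; neither Lemma~\ref{lem.changbd} nor Lemma~\ref{lem.majdissoc} is needed here.
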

What is important here is that given a set of relative polynomial growth we have produced a Bohr set which contains the original set, and which has controlled growth over a fixed range of dilations.  Extending this range down to zero can be done but involves considerable additional work as well as being unnecessary for our arguments.

The next lemma is the key ingredient which provides us with an appropriate Bohr set.  The idea originates with Green and Ruzsa in \cite[Lemma 2.3]{greruz::0}, but the lemma we record is more obviously related to \cite[Proposition 4.39]{taovu::}.
\begin{lemma}\label{lem.grt}
Suppose that $G$ is a finite (compact) Abelian group, $A,S \subset G$ have
\begin{equation*}
\mu_G(A+S) \leq K\mu_G(A) \textrm{ and }|\wh{1_{A+S}}(\gamma)| \geq (1-\epsilon)\mu_G(A+S).
\end{equation*}
Then $|1-\gamma(s)| \leq  \sqrt{2^3K \epsilon}$ for all $s \in S-S$.
\end{lemma}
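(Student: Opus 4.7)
The plan is to read the hypothesis as an $L^2$-type closeness of $\gamma$ to a constant phase on $A+S$, and then transfer this to $S-S$ by exploiting the trivial containments $A+s\subset A+S$ for every $s\in S$.

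\textbf{Setup.} Choose $\alpha\in\C$ with $|\alpha|=1$ so that $\alpha\wh{1_{A+S}}(\gamma)=|\wh{1_{A+S}}(\gamma)|$. Using $1-\Re z=\tfrac{1}{2}|1-z|^2$ for $|z|=1$, the hypothesis translates into
\begin{equation*}
\sum_{y\in A+S}\bigl(1-\Re(\alpha\overline{\gamma(y)})\bigr)\leq\epsilon|A+S|.
\end{equation*}

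\textbf{From $A+s\subset A+S$ to a constraint on $S$.} Fix $s_1,s_2\in S$. Restricting the sum above to the subset $A+s_i\subset A+S$ and using the character identity $\alpha\overline{\gamma(a+s_i)}=\alpha\overline{\gamma(a)}\,\overline{\gamma(s_i)}$ gives, for $i=1,2$,
\begin{equation*}
\sum_{a\in A}\bigl(1-\Re(\alpha\overline{\gamma(a)}\,\overline{\gamma(s_i)})\bigr)\leq\epsilon|A+S|.
\end{equation*}
Adding these two inequalities and applying the elementary bound $\Re(\alpha\overline{\gamma(a)}\beta)\leq|\beta|$ (valid since $|\alpha\overline{\gamma(a)}|=1$) with $\beta:=\overline{\gamma(s_1)}+\overline{\gamma(s_2)}$ produces
\begin{equation*}
|A|\bigl(2-|\beta|\bigr)\leq 2\epsilon|A+S|\leq 2K\epsilon|A|,
\end{equation*}
so $|\beta|\geq 2(1-K\epsilon)$.

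\textbf{Conclusion.} Since $\gamma$ is a character, a direct expansion gives $|\beta|^2=2+2\Re\gamma(s_1-s_2)$. Combined with $|\beta|\geq 2(1-K\epsilon)$ and $(1-K\epsilon)^2\geq 1-2K\epsilon$ this produces $\Re\gamma(s_1-s_2)\geq 1-4K\epsilon$, which rearranges to
\begin{equation*}
|1-\gamma(s_1-s_2)|^2=2\bigl(1-\Re\gamma(s_1-s_2)\bigr)\leq 8K\epsilon=2^3K\epsilon
\end{equation*}
for every $s=s_1-s_2\in S-S$, which is the claimed bound (equivalently $|1-\gamma(s)|\leq 2\sqrt{2K\epsilon}$).

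The argument is essentially direct and I do not foresee a substantive obstacle. The one small subtlety lies in how the two $L^2$ constraints (one for each of $s_1,s_2$) are combined: coupling them via the bound $\Re(z\beta)\leq|\beta|$ for unimodular $z$ converts the information directly into a control on $|\overline{\gamma(s_1)}+\overline{\gamma(s_2)}|$, which in turn is precisely what controls $\Re\gamma(s_1-s_2)$, giving the clean dependence on $K\epsilon$.
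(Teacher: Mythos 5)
Your proof is correct and follows essentially the paper's own route: extract a phase $\alpha$, rewrite the large Fourier coefficient as the $L^2$-type smallness $\sum_{y\in A+S}(1-\Re(\alpha\overline{\gamma(y)}))\leq\epsilon|A+S|$, restrict to the translates $A+s_1,A+s_2\subset A+S$, and then combine. Where the paper combines via the pointwise parallelogram-law bound $|1-\gamma(s_1-s_2)|^2\leq 2\bigl(|1-\omega\gamma(s_1)\gamma(x)|^2+|1-\omega\gamma(s_2)\gamma(x)|^2\bigr)$ and integrates over $A$, you sum the two restricted inequalities and use $\Re(z\beta)\leq|\beta|$ for unimodular $z$; this is the same content, packaged slightly differently, and yields the same constant $2^3K\epsilon$. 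One caution about your final line: what you establish (and what the paper's own proof establishes) is $|1-\gamma(s)|^2\leq 2^3K\epsilon$, which is not the literal statement $|1-\gamma(s)|\leq 2^3K\epsilon$; the linear bound is actually false in general (take $A=\{0\}$, $S=\{0,1\}$ in $\Z/N\Z$, where the hypothesis forces only $|1-\gamma(1)|\asymp\sqrt{\epsilon}$), so the missing square in the printed lemma is a misprint, and you should not describe the squared bound as ``the claimed bound.''
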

\begin{proof}
By hypothesis there is a phase $\omega \in S^1$ such that
\begin{equation*}
\int{1_{A+S}\omega\gamma d\mu_G} = |\wh{1_{A+S}}(\gamma)| \geq (1-\epsilon)\mu_G(A+S).
\end{equation*}
It follows that
\begin{equation*}
\int{1_{A+S}|1-\omega\gamma|^2d\mu_G} =2\int{1_{A+S}(1-\omega\gamma)d\mu_G} \leq 2\epsilon\mu_G(A+S),
\end{equation*}
and so if $y_0,y_1 \in S$ then
\begin{equation*}
\int{1_{A}|1-\omega\gamma(y_i)\gamma|^2d\mu_G} \leq
\int{1_{A+S}|1-\omega\gamma|^2d\mu_G} \leq 2\epsilon \mu_G(A+S).
\end{equation*}
However, the Cauchy-Schwarz inequality tells us that
\begin{equation*}
|1-\gamma(y_0-y_1)|^2 \leq 2(|1-\omega\gamma(y_0)\gamma(x)|^2
+|1-\omega\gamma(y_1)\gamma(x)|^2)
\end{equation*}
for all $x \in G$, whence
\begin{equation*}
\int{1_{A}|1-\gamma(y_0-y_1)|^2d\mu_G} \leq 2^3\epsilon \mu_G(A+S),
\end{equation*}
and the result follows.
\end{proof}
To prove the proposition we use an idea of Schoen from \cite{sch::0}, first introduced to Fre{\u\i}man-type problems by Green and Ruzsa in \cite{greruz::0}.  The essence is that if we have sub-exponential growth of a set then we can apply the Cauchy-Schwarz inequality and Parseval's theorem in a standard way to get a Fourier coefficient of very close to maximal value.
\begin{proof}[Proof of Proposition \ref{prop.cont}]
By the pigeonhole principle there is some $l=O(d\log 2d)$ such that $\mu_G(lX) \leq 2\mu_G((l-1)X)$.  We let $B'$ be the Bohr set with width function the constant function $1/2$ and frequency set $\Gamma:=\Spec_{1-\epsilon}(1_{lX})$ where we pick $\epsilon:=2^{-10}\kappa^2$.

It follows by Lemma \ref{lem.grt} applied to $A=(l-1)X$ and $S=X$ that
\begin{equation*}
|1-\gamma(x)| \leq \sqrt{2^3.2.\epsilon} = \kappa/8 \textrm{ for all } x \in X-X \textrm{ and } \gamma \in \Spec_{1-\epsilon}(1_{lX}),
\end{equation*}
and hence that $X-X \subset B_{\kappa/4}'$.

It remains to show that the Bohr set is not too large.  Begin by noting that
\begin{equation}\label{eqn.u}
\int{(1_{lX} ^{(k)})^2d\mu_G} \geq \frac{1}{\mu_G(k(lX))}\left(\int{1_{lX}^{(k)}d\mu_G}\right)^2 \geq \frac{\mu_G(lX)^{2k-1}}{(kl)^d},
\end{equation}
where $1_{lX}^{(k)}$ denotes the $k$-fold convolution of $1_{lX}$ with itself, and the inequality is Cauchy-Schwarz and then the hypothesis.  On the other hand, by Parseval's theorem
\begin{eqnarray*}
\sum_{\gamma \not \in \Spec_{1-\epsilon}(1_{lX})}{|\wh{1_{lX}}(\gamma)|^{2k}} &\leq& ((1-\epsilon)\mu_G(lX))^{2k-2}\sum_{\gamma \in \wh{G}}{|\wh{1_{lX}}(\gamma)|^2}\\ & \leq & \exp(-\Omega(k\kappa))\mu_G(lX)^{2k-1}\leq \frac{\mu_G(lX)^{2k-1}}{2(kl)^d}
\end{eqnarray*}
for some $k=O(d\kappa^{-1}\log 2d\kappa^{-1})$.  In particular, from (\ref{eqn.u}) we have that
\begin{equation*}
\sum_{\gamma \not \in \Spec_{1-\epsilon}(1_{lX})}{|\wh{1_{lX}}(\gamma)|^{2k}} \leq \frac{1}{2}\int{(1_{lX} ^{(k)})^2d\mu_G}.
\end{equation*}
It then follows from Parseval's theorem and the triangle inequality that
\begin{eqnarray*}
\sum_{\gamma \in \Spec_{1-\epsilon}(1_{lX})}{|\wh{1_{lX}}(\gamma)|^{2k}}& = &\sum_{\gamma \in \wh{G}}{|\wh{1_{lX}}(\gamma)|^{2k}} - \sum_{\gamma \not \in \Spec_{1-\epsilon}(1_{lX})}{|\wh{1_{lX}}(\gamma)|^{2k}}\\ & \geq & \int{(1_{lX} ^{(k)})^2d\mu_G}-\frac{1}{2}\int{(1_{lX} ^{(k)})^2d\mu_G} = \frac{1}{2}\int{(1_{lX} ^{(k)})^2d\mu_G}.
\end{eqnarray*}
On the other hand by the triangle inequality $|\wh{\beta'}(\gamma)| \geq 1/2$ if $\gamma \in \Gamma$ since $\delta \leq 1/2$, whence
\begin{equation*}
\sum_{\gamma \in \wh{G}}{|\wh{1_{lX}}(\gamma)|^{2k}|\wh{\beta'}(\gamma)|^2}\geq \frac{1}{4}\sum_{\gamma \in \Spec_{1-\epsilon}(1_{lX})}{|\wh{1_{lX}}(\gamma)|^{2k}} \geq \frac{\mu_G(lX)^{2k-1}}{8(kl)^d}.
\end{equation*}
But, by Parseval's theorem and H{\"o}lder's inequality we have that
\begin{eqnarray*}
\sum{|\wh{1_{lX}}(\gamma)|^{2k}|\wh{\beta'}(\gamma)|^2} & = & \int{(1_{lX} ^{(k)}\ast \beta')^2d\mu_G}\\ &\leq & \|1_{lX}^{(k)}\ast 1_{-lX}^{(k)}\|_{L^1(G)}\|\beta'\ast \beta'\|_{L^\infty(G)} = \frac{\mu_G(lX)^{2k}}{\mu_G(B')},
\end{eqnarray*}
and so
\begin{equation*}
\mu_G(B') \leq (kl)^d\mu_G(lX) \leq \exp(O(d\log 2d\kappa^{-1}))\mu_G(X).
\end{equation*}
Finally we apply Lemma \ref{lem.ubreg} to get a regular Bohr set $B$ with $B_2 \subset B_1'$ and $B_\kappa \supset B_{\kappa/4}'$ so the result is proved.
\end{proof}

\section{Covering and growth in Abelian groups}\label{sec.cov}

Covering lemmas are a major tool in additive combinatorics and have been since their development by Ruzsa in \cite{ruz::01}.  This was further extended by Green and Ruzsa in \cite{greruz::}, and they play a pivotal role in the non-Abelian theory as was highlighted by Tao in \cite{tao::6} which we do not have many other techniques.

While the most basic form of covering lemmas do work in the non-Abelian setting there is a refined argument due to Chang \cite{cha::0} which does not port over so easily.  
\begin{lemma}[Chang's covering lemma, {\cite[Lemma 5.31]{taovu::}}]
Suppose that $G$ is an (discrete) Abelian group and $A,S \subset G$ are finite sets with $|nA|\leq K^n|A|$ for all $n \geq 1$ and $|A+S| \leq L|S|$.  Then there is a set $T$ with $|T| = O(K \log 2KL)$ such that\footnote{Recall that $\Span(T):=\{\sum_{t \in T}{\sigma_t.t}: \sigma \in \{-1,0,1\}^{t}\}$.}
\begin{equation*}
A \subset \Span(T)+ S-S.
\end{equation*}
\end{lemma}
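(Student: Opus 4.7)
The plan is to construct $T \subset A$ by a greedy algorithm: starting from $T_0 = \emptyset$ and having built $T_n = \{t_1, \dots, t_n\}$, I pick $t_{n+1} \in A \setminus (\Span(T_n) + S - S)$ whenever this set is non-empty, and stop otherwise. By construction the resulting $T$ satisfies $A \subset \Span(T) + S - S$, so the task reduces to bounding the stopping time $n$.

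The key combinatorial step is a disjointness observation: for every $k \leq n$ and any two distinct $k$-subsets $I, I' \subset \{1,\dots,n\}$, the translates $\sum_{i \in I}t_i + S$ and $\sum_{i \in I'}t_i + S$ are disjoint. Indeed, if they met, letting $j := \max(I \triangle I')$ and rearranging would place $\pm t_j \in \Span(t_1, \dots, t_{j-1}) + S - S$, contradicting the greedy choice of $t_j$ (and using that the subgroup $\Span(t_1,\dots,t_{j-1})$ is closed under negation, as is $S - S$). Since $T \subset A$ and $|I| = k$, each partial sum lies in $kA$, giving
\begin{equation*}
\binom{n}{k}|S| \leq |kA + S|.
\end{equation*}

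For the upper bound on $|kA + S|$, I would chain two applications of Ruzsa's triangle inequality. The first gives $|kA + S|\cdot|A| \leq |(k+1)A|\cdot|S - A|$, and the second gives $|S - A|\cdot|A| \leq |2A|\cdot|A + S|$. Inserting the hypotheses $|(k+1)A| \leq K^{k+1}|A|$, $|2A| \leq K^2|A|$, and $|A+S| \leq L|S|$ yields the key bound $|kA + S| \leq K^{k+3}L|S|$, and hence $\binom{n}{k} \leq K^{k+3}L$ for every $0 \leq k \leq n$.

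Finally I would optimize in $k$: using $\binom{n}{k} \geq (n/k)^k$, the inequality rearranges to $n/k \leq K \cdot K^{3/k}L^{1/k}$, and choosing $k := \lceil 3\log K + \log L\rceil = O(\log KL)$ makes both correction factors $O(1)$, so $n \leq O(K) \cdot k = O(K\log KL)$. The main obstacle is the Ruzsa chain in the previous step: a careless application would produce a $K^{2k+O(1)}$ rather than $K^{k+O(1)}$ dependence in $|kA + S|$, which would only yield $|T| = O(K^2 \log KL)$. The slope-one exponent of $k$ is exactly what converts the dyadic disjointness into the sharp linear-in-$K$ final count rather than a power of $K$.
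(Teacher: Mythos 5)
Your proof is correct. Since the paper cites Tao and Vu for this lemma and gives no proof of its own, the natural benchmark inside the paper is its proof of Lemma \ref{lem.ccl} (the ``variant of Chang's covering lemma''), and the two are close in spirit but differ in a meaningful way. Both arguments find a maximal $S$-dissociated set $T$ inside $A$ and observe that certain subset-sums of $T$ give disjoint translates of $S$; in Lemma \ref{lem.ccl} the paper uses all $2^{|T|}$ subset-sums against the strong hypothesis $|kA+S|<2^k|S|$, which fails outright when $K\geq 2$, whereas you restrict to $k$-element subsets for a tunable $k$, getting $\binom{n}{k}|S|\leq |kA+S|$ and then optimising $k$ against the upper bound $|kA+S|\leq K^{k+3}L|S|$. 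I checked the two Ruzsa triangle applications ($|kA+S||A|\leq|(k+1)A||S-A|$ and $|S-A||A|\leq|2A||A+S|$) and the disjointness of the $k$-subset translates via $j=\max(I\triangle I')$, and both are sound; the ``slope-one'' exponent $K^{k+3}$ is, as you note, the whole point, since a doubly-applied Pl{\"u}nnecke-type bound giving $K^{2k}$ would lose the linear-in-$K$ conclusion. (Tao--Vu's own route is an iterated Ruzsa covering argument, so your proof is a genuinely different and somewhat cleaner one.) The only slip is terminological: $\Span(T)$ here is the set of $\{-1,0,1\}$-combinations, not a subgroup, but it is closed under negation, which is all the argument actually uses; and one should note separately that if the greedy process stops before step $k=O(\log KL)$ the bound is trivial.
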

We shall also need the following slight variant which provides a way in Abelian groups to pass from relative polynomial growth on one scale to all scales.
\begin{lemma}[A variant of Chang's covering lemma]\label{lem.ccl}
Suppose that $G$ is an (discrete) Abelian group and $A,S \subset G$ are finite sets with $|kA + S| < 2^k|S|$. Then there is a set $T \subset A$ with $|T| < k$ such that $A \subset \Span(T) + S-S$.
\end{lemma}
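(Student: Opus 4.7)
The plan is to construct $T$ by a greedy iteration. Set $T_0 := \emptyset$, and as long as $A \not\subset \Span(T_i) + S - S$, pick $a_{i+1} \in A \setminus (\Span(T_i) + S - S)$ and put $T_{i+1} := T_i \cup \{a_{i+1}\}$. This process terminates in some $T_m \subset A$ of size $m$ with $A \subset \Span(T_m) + S - S$; it remains to prove $m < k$.

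Suppose for contradiction that $m \geq k$, so we have $a_1, \ldots, a_k \in A$ with $a_i \notin \Span(\{a_1, \ldots, a_{i-1}\}) + S - S$ for each $i$. For $\epsilon \in \{0,1\}^k$ form the translate $E_\epsilon := \sum_{j=1}^k \epsilon_j a_j + S$. The key combinatorial observation is that these $2^k$ sets are pairwise disjoint: if $E_\epsilon \cap E_{\epsilon'} \neq \emptyset$ for distinct $\epsilon, \epsilon'$ and $r$ is the largest index of disagreement (say $\epsilon_r = 1$, $\epsilon_r' = 0$), then solving $\sum_j(\epsilon_j - \epsilon_j')a_j \in S - S$ for the leading term $a_r$ places $a_r$ in $\Span(\{a_1, \ldots, a_{r-1}\}) + S - S$, contradicting the greedy choice.

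It remains to embed these $2^k$ disjoint translates into $kA + S$, yielding $|kA + S| \geq 2^k|S|$ and contradicting the hypothesis. A priori each $E_\epsilon$ only sits inside $|\epsilon|A + S$, so one pads with $k - |\epsilon|$ copies of a base point $a_0 \in A$ to form $\tilde E_\epsilon := \sum_j \epsilon_j a_j + (k - |\epsilon|)a_0 + S \subset kA + S$. The main obstacle is that the padded family must remain pairwise disjoint, which would fail if the base point $a_0$ could be mixed up with the $a_j$'s. This is resolved by a preliminary translation $A \mapsto A - a^*$ for any fixed $a^* \in A$, which leaves $|kA + S|$ unchanged and places $0$ inside the translated set; taking $a_0 = 0$ then allows the disjointness calculation above to be repeated verbatim on the padded translates $\tilde E_\epsilon$, completing the contradiction.
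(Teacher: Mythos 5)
Your greedy construction is precisely the construction of a maximal ``$S$-dissociated'' subset of $A$, and your disjointness argument for the translates $E_\epsilon$ is exactly the argument the paper uses; so the overall strategy matches. You were also right to notice a genuine subtlety that the paper glosses over: $\sum_j\epsilon_j a_j$ is a sum of only $|\epsilon|$ elements of $A$, so placing the $2^k$ disjoint translates inside $kA+S$ requires padding each such sum to length exactly $k$. The paper simply asserts $P\subset kA$ without comment, which needs $0\in A$.

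The translation $A\mapsto A-a^*$, however, does not repair this. It preserves the hypothesis $|kA+S|<2^k|S|$, but the conclusion does not pull back. If $T'\subset A-a^*$ witnesses $A-a^*\subset\Span(T')+S-S$, then one only gets $A\subset(\Span(T')+a^*)+S-S$, and the set $\Span(T')+a^*$ need not equal $\Span(T)+S-S$ for any $T\subset A$ of admissible size: writing $T:=T'+a^*\subset A$, one has $\Span(T)=\{\sum_{t'}\epsilon_{t'}t'+(\sum_{t'}\epsilon_{t'})a^*\}$, where the coefficient of $a^*$ ranges over $\{-|T'|,\dots,|T'|\}$ rather than being frozen at $1$, so $\Span(T')+a^*\not\subset\Span(T)$ in general. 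Likewise, if you instead run the greedy on $A$ itself and only translate inside the contradiction step, the defining property $a_i\notin\Span(\{a_1,\dots,a_{i-1}\})+S-S$ is not preserved by $a_i\mapsto a_i-a^*$, for the same reason. (Indeed, as stated the lemma actually requires some such normalisation: $A=\{1\}$, $S=\{0\}$, $k=1$ satisfies $|kA+S|<2^k|S|$, yet the only $T\subset A$ with $|T|<1$ is $T=\emptyset$ and $1\notin\Span(\emptyset)+S-S=\{0\}$.) The clean fix is to add $0\in A$ as a hypothesis --- harmless, since in every application in the paper $A$ is $3X$ for a symmetric neighbourhood $X$ of the identity, or $3R_\Gamma(B_\delta)$, both of which contain $0$ --- and then pad with $a_0=0$ directly, with no translation at all.
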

\begin{proof}
Let $T$ be a maximal \emph{$S$-dissociated} subset of $A$, that is a maximal subset of $A$ such that
\begin{equation*}
(\sigma.T + S) \cap (\sigma'.T+S) = \emptyset \textrm{ for all } \sigma\neq \sigma' \in \{0,1\}^T.
\end{equation*}
Now suppose that $x' \in A \setminus T$ and write $T':=T \cup \{x'\}$. By maximality of $T$ there are elements $\sigma,\sigma' \in \{0,1\}^{T'}$ such that $(\sigma.T' +S) \cap (\sigma'.T' + S) \neq \emptyset$. Now if $\sigma_{x'} = \sigma'_{x'}$ then $(\sigma|_T.T +S) \cap (\sigma'|_T.T + A) \neq \emptyset$ contradicting the fact that $T$ is $S$-dissociated. Hence, without loss of generality, $\sigma_{x'}=1$ and $\sigma'_{x'}=0$, whence
\begin{equation*}
x' \in \sigma'|_T.T -\sigma|_T.T + S - S \subset \Span(T) + S-S.
\end{equation*}
We are done unless $|T|\geq k$; assume it is and let $T' \subset T$ be a set of size $k$. Denote $\{\sigma.T': \sigma \in \{0,1\}^{T'}\}$ by $P$ and note that $P \subset kA$, whence
\begin{equation*}
2^k|S| = |P+S| \leq |kA+S| < 2^k|S|.
\end{equation*}
This contradiction completes the proof.
\end{proof}
Although this is a result in Abelian groups, it has many parallels with Milnor's proof in \cite{mil::} establishing the dichotomy between polynomial growth and exponential growth in solvable groups.

The above lemma is particularly useful for controlling the order of relative polynomial growth through the next result, an idea which was introduced by Green and Ruzsa in \cite{greruz::}.
\begin{lemma}\label{lem.polyg}
Suppose that $G$ is an (discrete) Abelian group, $X \subset G$ and $2X-X \subset \Span(T) +X-X$ for some set $T$ of size $k$.  Then
\begin{equation*}
|(n+1)X-X| \leq (2n+1)^k|X-X| \textrm{ for all } n \geq 1.
\end{equation*}
\end{lemma}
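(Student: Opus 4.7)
The plan is a simple induction on $n$. Write $P_n := \{a_1 t_1 + \cdots + a_k t_k : a_i \in \Z,\ |a_i| \leq n\}$, a set of size at most $(2n+1)^k$; I will show $(n+1)X - X \subset P_n + (X-X)$ for all $n \geq 1$, and the claimed cardinality bound then follows by the trivial inequality $|A+B| \leq |A||B|$ for Minkowski sums.

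The base case $n = 1$ is the stated hypothesis, with $\Span(T)$ read as $P_1$ (the set of $\{-1,0,1\}$-linear combinations of elements of $T$). This is the reading actually produced by the preceding Lemma \ref{lem.ccl}, where each element $\sigma'|_T.T - \sigma|_T.T$ arising from $\sigma, \sigma' \in \{0,1\}^T$ has coefficients in $\{-1,0,1\}$ and therefore lies in $P_1$; and it is in fact forced on us, since the target bound $(2n+1)^k$ matches $|P_n|$ exactly and under the broader reading $\Span(T) = \langle T \rangle$ the lemma would fail outright (take $X = \{0,1,100,101\} \subset \Z$ and $T = \{1\}$: the hypothesis is vacuous, while $(n+1)X - X = \{a + 100b : -1 \leq a, b \leq n+1\}$ has size $(n+3)^2$, which exceeds $9(2n+1)$ for $n \geq 13$).

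For the inductive step, assume $(n+1)X - X \subset P_n + (X-X)$. Then
\begin{equation*}
(n+2)X - X = X + \bigl((n+1)X - X\bigr) \subset X + P_n + (X - X) = P_n + (2X - X) \subset P_n + P_1 + (X-X) \subset P_{n+1} + (X - X),
\end{equation*}
using, in order, the inductive hypothesis, the identity $X + (X-X) = 2X - X$, the base case applied to $2X - X$, and the coefficient-wise inclusion $\{-n,\dots,n\} + \{-1,0,1\} \subset \{-n-1,\dots,n+1\}$. This closes the induction, and no substantive obstacle remains beyond pinning down that $\Span(T)$ refers to $P_1$.
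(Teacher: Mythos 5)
Your argument is correct and is essentially the paper's own proof: the paper's one-line argument is exactly this induction, showing $(n+1)X-X \subset n\Span(T) + X - X$ together with the bound $|n\Span(T)| \leq (2n+1)^k$, where $n\Span(T)$ is precisely your $P_n$. Your reading of $\Span(T)$ as the set of $\{-1,0,1\}$-combinations of $T$ is indeed the intended (standard) one, consistent with what the proof of Lemma \ref{lem.ccl} actually produces.
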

\begin{proof}
By induction it is immediate that
\begin{equation*}
(n+1)X-X\subset n\Span(T) + X-X,
\end{equation*}
and it is easy to see that $|n\Span(T)| \leq (2n+1)^k$ from which the result follows.  
\end{proof}

\section{Lattices and coset progressions}\label{sec.lcp}

The geometry of numbers seems to play a pivotal role in proofs of Fre{\u\i}man-type theorems, and we direct the reader to \cite[Chapter 3.5]{taovu::} or the notes \cite{gre::1} for a much more comprehensive discussion.

Recall that $\Lambda$ is a \emph{lattice} in $\R^k$ if there are linearly independent vectors $v_1,\dots,v_k$ such that $\Lambda = v_1\Z+\dots+v_k\Z$; we call $v_1,\dots,v_k$ a basis for $\Lambda$.  Furthermore, a set $K$ in $\R^k$ is called a \emph{convex body} if it is convex, open, non-empty and bounded.

We require the following application of John's theorem and Minkowski's second theorem which provides us with a way of producing a generalised arithmetic progression from some sort of `convex progression'\footnote{A more formal notion of convex progression is introduced by Green in \cite{gre::1}, where a detailed discussion and literature survey may be found.}.

\begin{lemma}[{\cite[Lemma 3.33]{taovu::}}]\label{lem.tctvhv}  Suppose that $K$ is a symmetric convex body and $\Lambda$ is a lattice, both in $\R^d$.  Then there is a proper $d$-dimensional progression $P$ in $K \cap \Lambda$ such that $|P| \geq \exp(-O(d\log 2d)) |K \cap \Lambda|$.
\end{lemma}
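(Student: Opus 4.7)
The plan is to reduce to a round setting via John's theorem, construct the progression directly from vectors achieving the successive minima of $\Lambda$ with respect to $K$, and then compare its size with $|K \cap \Lambda|$ using a Mahler--Weyl adapted basis of $\Lambda$.

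\emph{Step 1 (John's theorem).} Since $K$ is centrally symmetric, there is a centrally symmetric ellipsoid $E$ with $E \subset K \subset \sqrt{d}\, E$. After a linear change of coordinates carrying $E$ to the Euclidean unit ball (which sends $\Lambda$ to another lattice of the same covolume and preserves volumes up to a uniform factor) one may assume $E$ is the Euclidean unit ball, so that $K$ is sandwiched between two balls of comparable radii up to $\sqrt{d}$.

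\emph{Step 2 (Successive minima and the candidate $P$).} Let $\lambda_1 \leq \cdots \leq \lambda_d$ be the successive minima of $\Lambda$ with respect to $K$, realised by $\R$-linearly independent lattice vectors $v_1,\dots,v_d$ with $v_i \in \lambda_i K$. Define
\[
P := \left\{\sum_{i=1}^d n_i v_i : n_i \in \Z,\ |n_i| \leq N_i\right\}, \qquad N_i := \left\lfloor \frac{1}{d\lambda_i}\right\rfloor,
\]
with the convention $N_i = 0$ when $\lambda_i > 1/d$. Properness is immediate from the $\R$-linear independence of the $v_i$. Writing $w_i := v_i/\lambda_i \in K$, convexity and symmetry of $K$ combined with $\sum_i |n_i|\lambda_i \leq 1$ give $\sum_i n_i v_i = \sum_i (n_i \lambda_i) w_i \in K$, so $P \subset K \cap \Lambda$.

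\emph{Step 3 (Counting).} We have $|P| = \prod_i (2N_i + 1)$, which is at least $\prod_i \max\{1, 1/(d\lambda_i)\}$. For the matching upper bound on $|K \cap \Lambda|$, I would invoke the standard Mahler--Weyl procedure to replace $v_1,\dots,v_d$ by an actual $\Z$-basis $u_1,\dots,u_d$ of $\Lambda$ with $u_i \in C(d)\lambda_i K$ and $C(d) = d^{O(1)}$. The John-reduction to a Euclidean ball lets one bound, for any $u = \sum m_i u_i \in K \cap \Lambda$, the coefficients $|m_i| \leq d^{O(1)}/\lambda_i$, whence $|K \cap \Lambda| \leq \prod_i\bigl(d^{O(1)}/\lambda_i + 1\bigr)$. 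Comparing factor by factor---separating the cases $\lambda_i \leq 1/d$ and $\lambda_i > 1/d$---each ratio is bounded below by $d^{-O(1)}$, so that $|P|/|K \cap \Lambda| \geq d^{-O(d)} = \exp(-O(d \log d))$, which is the required bound.

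\emph{Main obstacle.} The delicate point is the passage, in Step 3, from the vectors $v_1,\dots,v_d$ supplied by the successive minima to an actual $\Z$-basis of $\Lambda$ of comparable size. The $v_i$ are only $\R$-linearly independent and may fail to generate $\Lambda$; the Mahler--Weyl argument recovers a basis with $u_i \in d^{O(1)} \lambda_i K$, but no better. Since the final counting multiplies $d$ such factors, this polynomial loss per coordinate is exactly what produces the $\exp(-O(d \log d))$ in the conclusion, and any improvement would require a finer relationship between the minima and honest $\Z$-bases of $\Lambda$ than is available in general.
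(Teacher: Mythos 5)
The paper does not prove this lemma; it is cited directly from Tao and Vu's book (their Lemma 3.33) without proof, so there is no argument in the paper to compare against. On its own merits your strategy --- John's theorem, successive minima, Minkowski's second theorem --- is the right one and is essentially the standard route. Steps 1 and 2 are sound: the progression $P$ built from the minima-achieving vectors $v_i$ with side lengths $N_i = \lfloor 1/(d\lambda_i)\rfloor$ is proper, lies in $K \cap \Lambda$ by convexity, and has $|P| \geq \prod_i \max\{1, 1/(d\lambda_i)\}$.

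The gap is in Step 3, at the sentence claiming that the John reduction ``lets one bound, for any $u = \sum m_i u_i \in K \cap \Lambda$, the coefficients $|m_i| \leq d^{O(1)}/\lambda_i$.'' Knowing only that the basis vectors $u_i$ are short, say $u_i \in d^{O(1)}\lambda_i K$, does not by itself control the integer coordinates of a short lattice vector in that basis: a collection of short basis vectors can still represent a short vector with enormous coefficients if the $u_i$ are nearly collinear. What one actually needs is a lower bound on the Gram--Schmidt lengths $\|u_i^*\|$, from which $|m_i| \leq \|u\|/\|u_i^*\|$ together with a size-reduction step gives the claim; establishing $\|u_i^*\| = \Omega(d^{-O(1)}\lambda_i)$ requires combining the Mahler bound $\|u_j\| = O(j\lambda_j)$ with Minkowski's second theorem, which controls $\prod_j\|u_j^*\| = \mathrm{covol}(\Lambda)$ in terms of $\prod_j\lambda_j$. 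Alternatively one can avoid basis reduction altogether by passing to the sublattice $\Lambda' := \Lambda \cap \Span(v_1,\dots,v_j)$, where $j$ is the largest index with $\lambda_j \leq 1$ (so $K\cap\Lambda = K\cap\Lambda'$), and then comparing $|K\cap\Lambda'|$ with $\mathrm{vol}(K\cap V)/\mathrm{covol}(\Lambda')$ by a packing argument before invoking Minkowski's second theorem directly. Your ``obstacle'' paragraph correctly locates the delicate point but understates it: the issue is not merely a polynomial loss from the Mahler basis, it is that passing from the sizes $\|u_i\|$ to the coefficient sizes $|m_i|$ is an additional nontrivial step which your sketch asserts rather than proves.
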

The $\exp(-O(d\log d))$ factor should not come as a surprise: consider packing a $d$-dimensional cube (playing the role of the generalised progression) inside a $d$-dimensional sphere.

The question remains of how to find a `convex progression', and to do this Ruzsa \cite{ruz::9} introduced an important embedding.  Suppose that $G$ is a (discrete) finite Abelian group and $\Gamma \subset \wh{G}$.  Then we define a map
\begin{eqnarray*}
R_\Gamma:G& \rightarrow & C(\Gamma,\R)\\ x & \mapsto  & R_\Gamma(x):\Gamma \rightarrow \R; \gamma \mapsto \frac{1}{2\pi}\arg(\gamma(x)),
\end{eqnarray*}
where the argument is taken to lie in $(-\pi,\pi]$.  Note that $R_\Gamma$ preserves inverses, meaning that $R_\Gamma(-x)=-R_\Gamma(x)$, and furthermore if\footnote{Recall that if $X$ is a normed space then $\|\cdot\|_X$ denotes the norm on that space, so that $\|f\|_{C(\Gamma,\R)} = \|f\|_{L^\infty(\Gamma)}$.}
\begin{equation*}
\|R_\Gamma(x_1)\|_{C(\Gamma,\R)} + \dots + \|R_\Gamma(x_d)\|_{C(\Gamma,\R)} <1/2
\end{equation*}
then
\begin{equation*}
R_\Gamma(x_1+\dots + x_d)=R_\Gamma(x_1)+\dots+R_\Gamma(x_d).
\end{equation*}
This essentially encodes the idea that $R_\Gamma$ behaves like a Fre{\u\i}man morphism\footnote{We direct the unfamiliar reader to \cite[Chapter 5.3]{taovu::}.}.  We shall use this embedding to establish the following proposition.
\begin{proposition}\label{prop.coset}
Suppose that $G$ is a finite Abelian group, $d \in \N$ and $B$ is a Bohr set such that
\begin{equation*}
\mu_G(B_{(3d+1)\delta})<2^d\mu_G(B_\delta) \textrm{ for some } \delta < 1/4(3d+1).
\end{equation*}
Then $B_\delta$ contains a proper coset progression $M$ of dimension at most $d$ satisfying the estimate $\beta_\delta(M) =\exp(-O(d\log 2d))$.
\end{proposition}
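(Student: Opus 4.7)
My plan is to realise $B_\delta$ (modulo its natural subgroup of periods) as the intersection of a lattice with a symmetric convex body via the Ruzsa embedding $R_\Gamma$ introduced above, use the growth hypothesis together with Lemma \ref{lem.ccl} to reduce the effective dimension to at most $d$, and then extract a proper $d$-dimensional progression via Lemma \ref{lem.tctvhv}.

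\emph{Setup.} Let $\Gamma$ denote the frequency set of $B$ and put $H:=\bigcap_{\gamma\in\Gamma}\ker\gamma\leq G$, a subgroup lying inside every dilate of $B$. The bound $\delta<1/4(3d+1)$ guarantees that for any $x_1,\dots,x_{3d+1}\in B_\delta$ we have $\sum_i\|R_\Gamma(x_i)\|_{C(\Gamma,\R)}<1/2$, so $R_\Gamma$ is additive on such sums. Lifting $R_\Gamma(G/H)\subseteq\T^\Gamma$ back to $\R^\Gamma$ yields a lattice $\Lambda\subseteq\R^\Gamma$ containing $\Z^\Gamma$, and for every integer $n$ with $1\leq n\leq 3d+1$ the image $R_\Gamma(B_{n\delta})/H$ is in bijection with $\Lambda\cap nK$ (up to the mild distortion of $\arcsin$), where $K:=\{u\in\R^\Gamma:|u_\gamma|\leq \pi^{-1}\arcsin(\delta\delta_\gamma/2)\}$ is a symmetric convex box. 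The hypothesis therefore translates to $|\Lambda\cap(3d+1)K|<2^d|\Lambda\cap K|$.

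\emph{Dimension reduction and progression extraction.} Apply Lemma \ref{lem.ccl} with $A=S=\Lambda\cap K$ and $k=d$: since $dA+A\subseteq\Lambda\cap(d+1)K\subseteq\Lambda\cap(3d+1)K$, we have $|dA+A|<2^d|A|$, which produces $T\subseteq A$ with $|T|\leq d$ such that $A\subseteq\Span(T)+(\Lambda\cap 2K)$. Let $V:=\R\Span(T)\subseteq\R^\Gamma$, of dimension $d'\leq d$, and $\Lambda_0:=\Z\Span(T)\subseteq V\cap\Lambda$; a coset decomposition of $A$ modulo $\Lambda_0$ then yields a translate $y+\Lambda_0$ carrying an appropriate share of $A$. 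Working in $V\cong\R^{d'}$ I apply Lemma \ref{lem.tctvhv} to $\Lambda_0$ and a symmetric convex body $K_0\subseteq V$ derived from $K$ (centred at the chosen representative) to obtain a proper $d'$-dimensional progression $P$ of size at least $\exp(-O(d'\log d'))|\Lambda_0\cap K_0|$. Pulling $P$ back through $R_\Gamma$ and combining with the subgroup $H$ produces a proper coset progression $M=H+R_\Gamma^{-1}(P)\subseteq B_\delta$. Padding with $d-d'$ trivial directions makes $M$ formally $d$-dimensional, and the desired estimate $\beta_\delta(M)=\exp(-O(d\log d))$ follows.

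\emph{Main obstacle.} The hardest step is ensuring that the sub-lattice intersection $\Lambda_0\cap K_0$ captures an $\exp(-O(d\log d))$ fraction of $|\Lambda\cap K|$: a naive pigeonhole on cosets of $\Lambda_0$ in $A$ bounds the heaviest coset only by a $2^{-d}$ density, which is not by itself strong enough. The growth hypothesis must therefore be exploited more subtly—either by iterating Chang covering inside $V\cap\Lambda$, or by invoking Minkowski's second theorem in $V$ to compare $|\Lambda_0\cap K_0|$ with $|\Lambda\cap K|$—to secure the required bound. A secondary technical point is that the Ruzsa embedding is only additive on sums of bounded total weight, so the Minkowski-basis step sizes of the extracted GAP must be chosen short enough that the lift actually sits inside $B_\delta$; this is reconcilable with the target size estimate because Lemma \ref{lem.tctvhv} provides essentially optimal generators.
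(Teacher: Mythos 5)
You are right to flag the gap at the end of your proposal: it is genuine, and it cannot be fixed by a more careful coset pigeonhole or by Minkowski's second theorem inside $V$. The structural problem is your choice of inputs to Lemma \ref{lem.ccl}. Taking $A=S=\Lambda\cap K$ and $k=d$ yields only $\Lambda\cap K\subset\Span(T)+(\Lambda\cap 2K)$, a covering of $A$ by translates; this does not close under addition (nothing forces $2A\subset\Span(T)+(\Lambda\cap 2K)$), so you are stuck with a covering rather than a containment and forced into the coset decomposition. But then the GAP extracted from Lemma \ref{lem.tctvhv} lives in a translate $y+\Lambda_0$ rather than in $R_\Gamma(B_\delta)$ itself, and the pullback argument breaks: Lemma \ref{lem.tctvhv} wants a symmetric convex body and a lattice centred at the origin, and $R_\Gamma^{-1}$ is only a Fre\u{\i}man $2$-isomorphism on $R_\Gamma(B_\delta)$, so there is no clean way to land the translated progression inside $B_\delta$.

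The factor $3d+1$ in the hypothesis is there precisely to permit a stronger application of Lemma \ref{lem.ccl}. Since $R_\Gamma$ is a Fre\u{\i}man morphism on $B_{(3d+1)\delta}$, the hypothesis gives $|(3d+1)R_\Gamma(B_\delta)|<2^d|R_\Gamma(B_\delta)|$, i.e.\ $|d\cdot 3R_\Gamma(B_\delta)+R_\Gamma(B_\delta)|<2^d|R_\Gamma(B_\delta)|$. Apply Lemma \ref{lem.ccl} with $A=3R_\Gamma(B_\delta)$, $S=R_\Gamma(B_\delta)$, $k=d$ to obtain $T$ of size $<d$ with $3R_\Gamma(B_\delta)\subset\Span(T)+2R_\Gamma(B_\delta)$. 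The $3$ on the left and $2$ on the right make the inclusion close under adding $R_\Gamma(B_\delta)$, so by induction $nR_\Gamma(B_\delta)\subset V+2R_\Gamma(B_\delta)$ for all $n$, where $V:=\R\Span(T)$. The second missing ingredient is the finiteness argument that upgrades this covering to containment: for any $v\in 2R_\Gamma(B_\delta)$, all multiples $nv$ lie in $V+2R_\Gamma(B_\delta)$, a finite union of affine translates of $V$, so two of them land in the same translate, $(n-n')v\in V$, and hence $v\in V$. Thus $R_\Gamma(B_\delta)\subset V$, there is no coset decomposition at all, and $R_\Gamma(B_\delta)=K\cap\Lambda$ with $K$ a symmetric convex body in $V$ and $\Lambda$ a lattice in $V$, so Lemma \ref{lem.tctvhv} applies directly and the GAP pulls back via the Fre\u{\i}man $2$-isomorphism. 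In short: feed $3R_\Gamma(B_\delta)$ covered by $R_\Gamma(B_\delta)$ into Lemma \ref{lem.ccl}, not $\Lambda\cap K$ by itself, and then use the multiples-of-$v$ argument to pass from covering to containment -- this removes entirely the sub-lattice comparison you were stuck on.
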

\begin{proof}
We write $\Gamma$ for the frequency set of $B$ and note that we may assume that $L:=\bigcap{\{\ker \gamma: \gamma \in \Gamma\}}$ is trivial.  Indeed, if it is non-trivial we may quotient out by it without impacting the hypotheses of the proposition; we call the quotiented Bohr set $B'$ and note that $B_\delta=B_\delta'+L$ from which the result follows.

To start with note that if $x \in B_\eta$ then
\begin{equation*}
\|R_\Gamma(x)\|_{C(\Gamma,\R)} \leq \frac{1}{2\pi} \arccos (1-\eta^2/2) \leq 2\eta,
\end{equation*}
and so since $2(3d+1)\delta < 1/2$ we have that if $x_1,\dots,x_{3d+1} \in B_\delta$ then
\begin{equation}\label{eqn.vfr}
R_\Gamma(x_1+\dots+x_{3d+1}) = R_\Gamma(x_1)+\dots + R_\Gamma(x_{3d+1}).
\end{equation}
By hypothesis we then have that
\begin{eqnarray*}
|(3d+1)R_\Gamma(B_\delta)|  = |R_\Gamma((3d+1)B_\delta)| & \leq & |(3d+1)B_\delta|\\ & \leq & |B_{(3d+1)\delta}| < 2^d|B_\delta| = 2^d|R_\Gamma(B_\delta)|.
\end{eqnarray*}
Apply the variant of Chang's covering lemma in Lemma \ref{lem.ccl} to the set $R_\Gamma(B_\delta)$ (which is symmetric since $R_\Gamma$ preserves inverses and $B_\delta$ is symmetric) to get a set $X\subset R_\Gamma(B_\delta)$ with $|X| \leq d$ such that
\begin{equation*}
3R_\Gamma(B_\delta) \subset \Span(X) + 2R_\Gamma(B_\delta).
\end{equation*}
Writing $V$ for the real subspace of $C(\Gamma,\R)$ generated by $X$ we see that $\dim V \leq d$ and (by induction) that
\begin{equation*}
nR_\Gamma(B_\delta) \subset V+ 2R_\Gamma(B_\delta)
\end{equation*}
for all $n$.  Now, suppose that $v \in 2R_\Gamma(B_\delta)$.  It follows that 
\begin{equation*}
n.v \in 2nR_\Gamma(B_\delta) \subset V+ 2R_\Gamma(B_\delta).
\end{equation*}
for all naturals $n$.  Since $2R_\Gamma(B_\delta)$ is finite we see that there are two distinct naturals $n$ and $n'$ and some element $w \in 2R_\Gamma(B_\delta)$ such that $n.v, n'.v \in V+w$.  It follows that $(n-n').v \in V$ whence $v \in V$ since $V$ is a vector space and $n \neq n'$.  We conclude that $R_\Gamma(B_\delta) \subset V$.

Let $E$ be the group generated by $B_\delta$ which is finite, and note that $H:=R_\Gamma(E) + C(\Gamma,\Z)$ is a closed discrete subgroup of $C(\Gamma,\R)$, where $C(\Gamma,\Z)$ is the group of $\Z$-valued functions on $\Gamma$.  Since $H$ is a closed discrete subgroup of $C(\Gamma,\R)$ contained in $V$, it is also a closed discrete subgroup of $V$.  Since $V$ is certainly generated by $R_\Gamma(B_\delta)$ and $H \supset R_\Gamma(B_\delta)$ we see that $\Lambda:=H \cap V$ has finite co-volume and so is a lattice in $V$.

Let $\rho$ be the unique solution to $|1-\exp(2\pi i \rho)| = \eta$ in the range $[0,1/2]$, and write $Q_\rho$ for the $\rho$-cube in $C(\Gamma,\R)$, which is a symmetric convex body in $C(\Gamma,\R)$, and so $K:=V \cap Q_\rho$ is a symmetric convex body in $V$.  Now, by Lemma \ref{lem.tctvhv} the set $K\cap \Lambda$ contains a proper $d$-dimensional progression $P$ of size $\exp(-O(d\log 2d))|K\cap \Lambda|$.  

To see this note that by (\ref{eqn.vfr}), $R_\Gamma|_{B_\delta}$ is a Fre{\u\i}man $2$-homomorphism.  Now, if the elements $x_1,x_2,x_3,x_4 \in B_\delta$ have
\begin{equation*}
R_\Gamma(x_1)+R_\Gamma(x_2)=R_\Gamma(x_3)+R_\Gamma(x_4)
\end{equation*}
then
\begin{equation*}
R_\Gamma(x_1+x_2-x_3-x_4) = R_\Gamma(x_1)+R_\Gamma(x_2)+R_\Gamma(-x_3)+R_\Gamma(-x_4)=0.
\end{equation*}
However, $R_\Gamma(x)=0$ if and only if $\gamma(x)=1$ for all $\gamma \in \Gamma$, which is to say if and only if $x\in L$.  Since $L$ is trivial we conclude that $x_1+x_2=x_3+x_4$ and hence that $R_\Gamma$ is injective on $B_\delta$, and $R_\Gamma^{-1}:R_\Gamma(B_\delta)\rightarrow B_\delta$ is a Fre{\u\i}man $2$-homomorphism.

On the other hand, by (\ref{eqn.vfr}) $R_\Gamma:B_\delta \rightarrow R_\Gamma(B_\delta)$ is a Fre{\u\i}man $2$-homomorphism, and so $R_\Gamma:B_\delta \rightarrow R_\Gamma(B_\delta)$ is a Fre{\u\i}man $2$-isomorphism, and hence so is its inverse $R_\Gamma^{-1}:R_\Gamma(B_\delta)\rightarrow B_\delta$

Since $B_\delta=R_\Gamma^{-1}(K \cap \Lambda)$, we are done by, for example, \cite[Proposition 5.24]{taovu::}, which simply says that the image of a proper coset progression under a Fre{\u\i}man isomorphism of order at least $2$ is a proper coset progression of the same size and dimension; in particular $R_\Gamma^{-1}(P)$ is a proper coset progression of size $\exp(-O(d\log 2d))|B_\delta|$ and dimension at most $d$.
\end{proof}

\section{Proof of the main theorem}\label{sec.pf}

The result driving Theorem \ref{thm.main} is the following which brings together all the ingredients of the paper.
\begin{theorem}\label{thm.core}
Suppose that $G$ is a finite Abelian group, $A,S \subset G$ have $|A+S| \leq K\min\{|A|,|S|\}$, and $\epsilon \in (0,1]$ is a parameter.  Then there is a proper coset progression $M$ with
\begin{equation*}
\dim M =O(\epsilon^{-2}\log^62\epsilon^{-1}K) \textrm{ and } |M| \geq \left(\frac{\epsilon}{2\log K}\right)^{O(\epsilon^{-2}\log^62\epsilon^{-1}K)}|A+S|,
\end{equation*}
such that for any probability measure $\mu$ supported on $M$ we have
\begin{equation*}
\|1_{A+S}\ast \mu\|_{\ell^\infty(G)}\geq 1-\epsilon \textrm{ and } \|1_A \ast \mu\|_{\ell^\infty(G)} \geq (1-\epsilon)\frac{|A|}{|A+S|}.
\end{equation*}
\end{theorem}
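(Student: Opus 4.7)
The plan is to assemble the ingredients of \S\ref{sec.bst}--\S\ref{sec.lcp} along the lines sketched in the introduction.

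First, I would apply Proposition~\ref{prop.key} with $T:=A$ (so that $L=K$) and carefully chosen parameters $k\in\N$ and $\eta\in(0,1]$ (to be optimised) to produce a symmetric neighbourhood $X\ni 0$ of size at least $\exp(-O(\eta^{-2}k^2\log^2 K))|A|$ on which the crucial function $f:=\mu_{-A}\ast 1_{A+S}\ast\mu_{-S}$ satisfies $|f-1|\leq \eta$ pointwise on the $k$-fold sumset $kX$.  A parallel application of Lemma~\ref{lem.cs} to $g:=1_A$ (keeping the dual L\'opez--Ross identity $\langle 1_A\ast\mu_S,\mu_{A+S}\rangle=|A|/|A+S|$ in mind) gives approximate invariance of $1_A\ast\mu_S$ on a set of the same form, which we may incorporate into the same $X$ by intersecting.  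Since $f$ is supported in $(A-A)+(S-S)$, Pl\"unnecke--Ruzsa forces $|kX|\leq K^{O(1)}|A|$, and combined with the size bound on $X$ this yields $|kX|/|X|\leq \exp(O(\eta^{-2}k^2\log^2 K))$.

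Second, I would promote this to a relative polynomial growth estimate $\mu_G(nX)\leq n^d\mu_G(X)$ valid for all $n\geq 1$, with $d$ bounded by $\log^{O(1)}\epsilon^{-1}K$.  This is the main obstacle.  The strategy is to apply the variant of Chang's covering lemma (Lemma~\ref{lem.ccl}) at an appropriate scale to produce a small set $T$ with $2X-X\subset\Span(T)+X-X$, and then invoke Lemma~\ref{lem.polyg} to bootstrap to polynomial growth for all $n$.  The tension is that $k$ in Proposition~\ref{prop.key} must be chosen large enough for the hypotheses of the covering lemma to apply, yet small enough to keep $|X|/|A|$ substantial; resolving this balance drives the final exponents in the statement.

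Third, with polynomial growth in hand, Proposition~\ref{prop.cont} with a parameter $\kappa$ proportional to $\epsilon$ produces a regular Bohr set $B$ with $X-X\subset B_\kappa$ and $\mu_G(B_2)\leq\exp(O(d\log d\cdot\kappa^{-1}))\mu_G(X)$, making $X$ dense in $B$ with density $\sigma\geq\exp(-O(d\log d\cdot\kappa^{-1}))$.  I would then Fourier-analyse relative to the measure $\beta$ on $B$: the approximate invariance of $f$ under convolution by $\mu_X^{(k)}$ together with Plancherel concentrates the relevant Fourier mass on the set $\Spec_{1/2}(1_X,\beta)$ once $k\sim\log K$.  Lemma~\ref{lem.changbd} controls the $(1,\beta)$-relative entropy of this spectrum by $O(\log\sigma^{-1})$, and Lemma~\ref{lem.majdissoc} majorises it by a sub-Bohr set $B''\subset B$ of matching dimension that approximately annihilates the spectrum on a suitable dilate.

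Finally, regularity of $B''$ lets us pass to a dilate $B''_\delta$ satisfying the hypothesis of Proposition~\ref{prop.coset}, which delivers a proper coset progression $M\subset B''_\delta$ with the claimed dimension and size bounds.  To verify the two pointwise conclusions, I would convolve each L\'opez--Ross identity with an arbitrary probability measure $\mu$ supported on $M$: the annihilation property combined with H\"older's inequality (in the manner of the proof of Proposition~\ref{prop.key}) forces $\langle 1_{A+S}\ast\mu_{-S}\ast\mu,\mu_A\rangle\geq 1-\epsilon$ and $\langle 1_A\ast\mu_S\ast\mu,\mu_{A+S}\rangle\geq(1-\epsilon)|A|/|A+S|$.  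Since each inner product is a weighted average of values bounded above by the corresponding $\ell^\infty$-norm in the conclusion, extracting a point of large value completes the proof.
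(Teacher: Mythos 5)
Your plan follows the paper's route in broad outline (Proposition~\ref{prop.key} $\to$ polynomial growth via Lemma~\ref{lem.ccl} and Lemma~\ref{lem.polyg} $\to$ Proposition~\ref{prop.cont} $\to$ relative Chang and Lemma~\ref{lem.majdissoc} $\to$ Proposition~\ref{prop.coset} $\to$ final averaging), but there are a few places where you've either smuggled in an unresolved gap or taken a detour the paper avoids.

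First, the ``parallel application of Lemma~\ref{lem.cs} to $g:=1_A$'' with a dual L\'opez--Ross identity is unnecessary and changes the architecture.  The paper obtains \emph{both} $\ell^\infty$ conclusions from the \emph{single} inner product
$\langle 1_{A+S'},\mu_A\ast\mu_{S'}\ast\mu_X^{(k)}\ast\mu\rangle\geq 1-\epsilon$
by averaging once against $\mu_A$ and once against $\mu_{A+S'}$; this is the content of Lemma~\ref{lem.tech}.  Intersecting two independently produced sets $X$, as you propose, costs a further multiplicative loss you would have to track, and is simply not needed.

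Second, and more seriously, your step from ``$kX\subset(A-A)+(S-S)$ and Pl\"unnecke--Ruzsa gives $|kX|\leq K^{O(1)}|A|$'' to ``relative polynomial growth for all $n$'' is where the whole argument lives, and you have not actually resolved the tension you name.  The paper first passes via Pl\"unnecke to a non-empty $S'\subset S$ with $|(A+S')+(A+S')|\leq K^4|A+S'|$, so that $A+S'$ has genuine small doubling and the Pl\"unnecke--Ruzsa estimates give $|4l((A+S')-(A+S'))|\leq K^{32l}|A+S'|$ for \emph{every} $l$.  It then sets $l=\lceil\epsilon^{-2}k^2\log K\rceil$, and only with this specific coupling of $l$ to $k$ can one choose $k$ of size $\Theta(\log\epsilon^{-1}K)$ so that $|(3kl+1)X|<2^{kl}|X|$, which is precisely the hypothesis of Lemma~\ref{lem.ccl}.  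A bound on $|kX|$ alone (for a single $k$) does not feed Lemma~\ref{lem.ccl}; you need the iterated bound at a scale you control.  You should either carry out the $S'$ reduction or replace it by an explicit Ruzsa-triangle computation giving $|n((A-A)+(S-S))|\leq K^{O(n)}|A+S|$ for all $n$; either works, but something must be said.

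Third, ``regularity of $B''$ lets us pass to a dilate $B''_\delta$ satisfying the hypothesis of Proposition~\ref{prop.coset}'' is not right as stated.  Proposition~\ref{prop.coset} needs the growth condition $\mu_G(B''_{(3t+1)\eta})<2^t\mu_G(B''_\eta)$ at a suitable $\eta$, and regularity alone does not supply it.  The paper instead lower-bounds $\mu_G(B''_\eta)$ via Lemma~\ref{lem.inter} (the intersection lemma), using the fact that $B''=B_{\rho'}\wedge\bigwedge_\lambda B^{(\lambda)}_\nu$ is a meet of a Bohr set of controlled entropy with $r$ rank-one Bohr sets each obeying the arc estimate~(\ref{eqn.bohrarc}), and upper-bounds $\mu_G(B'')\leq\mu_G(B)\leq\exp(d)\mu_G(X)$; these two bounds together give the required growth condition for $t=O(d\log dK)$.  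Omitting Lemma~\ref{lem.inter} leaves no mechanism to verify Proposition~\ref{prop.coset}'s hypothesis.

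Finally, a small point: the paper uses $\Spec_c(\mu_X)$ for a small absolute constant $c$ (so that $c^k\leq\epsilon K^{-O(1)}$ with $k\geq\log\epsilon^{-1}K$), not $\Spec_{1/2}$; with $\Spec_{1/2}$ the tail term $S_1$ in the final Fourier estimate would not be small.
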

\begin{proof}
We start by thinking of $G$ as discrete and using counting measure.  By Pl{\"u}nnecke's inequality \cite[Corollary 6.28]{taovu::} there is a non-empty set $S' \subset S$ such that
\begin{equation*}
|A+A+S'| \leq \left(\frac{K\min\{|A|,|S|\}}{|S|}\right)^2|S'|\leq K^2\frac{|A||S'|}{|S|}\leq K^2|A|.
\end{equation*}
Note, in particular, that since $|A+A+S'| \geq |A|$ we have $|S'| \geq |S|/K^2$ from the second inequality.  Applying the inequality again we get a non-empty set $A' \subset A$ such that
\begin{equation*}
|A'+(A+S')+(A+S')| \leq K^4|A'|,
\end{equation*}
and it follows that 
\begin{equation}\label{eqn.kj}
|(A+S') + (A+S')| \leq K^4|A+S'|.
\end{equation}
Now we apply Proposition \ref{prop.key} with $T=A$ to get a symmetric neighbourhood of the identity $X$ such that
\begin{equation*}
|X| \geq \exp(-O(\epsilon^{-2}k^2\log^2 2K))|A+S| 
\end{equation*}
since $|A| \geq |A+S|/K$, and
\begin{equation}\label{eqn.hg}
|\mu_{-A} \ast 1_{A+S'} \ast \mu_{-S'}(x) - 1| \leq \epsilon/4 \textrm{ for all }x \in kX.
\end{equation}
In the first instance it follows that $kX \subset (A+S')-(A+S')$.  On the other hand, by the Pl{\"u}nnecke-Ruzsa estimates \cite[Corollary 6.29]{taovu::} applied to (\ref{eqn.kj}) we have that
\begin{eqnarray*}
|4l((A+S')-(A+S'))| & \leq &K^{32l}|A+S'|\\ &= & \exp(O(l\log K +\epsilon^{-2}k^2\log^2 K))|X|,
\end{eqnarray*}
and hence
\begin{equation*}
|4lkX| \leq \exp(O(l\log 2K +\epsilon^{-2}k^2\log^2 2K))|X|.
\end{equation*}
We put $l=\lceil \epsilon^{-2}k^2\log 2K\rceil$ so that
\begin{equation*}
|(3kl+1)X| \leq |4klX| \leq 2^{kl.O(k^{-1} \log 2K)}|X|.
\end{equation*}
Hence we can pick $k$ such that
\begin{equation*}
1+\log \epsilon^{-1}K \leq k=O(\log 2\epsilon^{-1}K) \textrm{ and } |(3kl+1) X| <2^{kl}|X|.
\end{equation*}
By the variant of Chang's covering lemma in Lemma \ref{lem.ccl} there is some set $T$ of size at most $kl = O(\epsilon^{-2}\log^4 2 \epsilon^{-1}K)$ such that $3X \subset \Span(T) +2X$, and hence (by Lemma \ref{lem.polyg})
\begin{equation*}
|(n+2)X)| \leq n^{O(\epsilon^{-2}\log^4 2\epsilon^{-1}K)}|2X| \textrm{ for all } n \geq 1.
\end{equation*}
On the other hand $|2X| \leq 2^{kl}|X|$, and so (re-scaling the measure to think of $G$ as compact) we have
\begin{equation*}
\mu_G(nX) \leq n^{O(\epsilon^{-2}\log^4 2\epsilon^{-1}K)}\mu_G(X) \textrm{ for all } n \geq 1.
\end{equation*}
Now, by Proposition \ref{prop.cont} applied to the set $X$ there is a $d=O(kl\log 2kl\kappa^{-1})$ (which we may also assume is at least $1$) and a regular Bohr set $B$ such that
\begin{equation*}
X-X \subset B_{\kappa/2} \textrm{ and } \mu_G(B_2) \leq \exp(d)\mu_G(X).
\end{equation*}
Let $c$ be the absolute constant in the following technical lemma and note that since $X$ is a neighbourhood of the identity, $X \subset B$ and $\beta(X) \geq \exp(-d)$.

We apply Chang's theorem relative to $B$ to get that $\Spec_c(1_X,\beta)=\Spec_c(\mu_X)$ has $(1,\beta)$-relative entropy
\begin{equation*}
r=O(c^{-2}\log 2\|1_X\|_{L^2(\beta)}\|1_X\|_{L^1(\beta)}^{-1}) = O(d).
\end{equation*}
It follows from Lemma \ref{lem.majdissoc} that there is a set of characters $\Lambda$ of size $r$ and a $\rho = \Omega(1/(1+h(B))r)$ such that for all $\gamma \in \Spec_c(\mu_X)$ we have
\begin{equation*}
|1-\gamma(x)| =O(\nu r + \rho' r h(B)h(B_{\rho})) \textrm{ for all }x \in B_{\rho'} \wedge B_\nu',
\end{equation*}
where $B'$ is the Bohr set with width function the constant function $2$ and frequency set $\Lambda$. Provided $\rho \geq \kappa$ we see that
\begin{equation*}
\mu_G(X) \leq \mu_G(B_{\rho/2}) \leq \mu_G(B_{1/2}) \textrm{ and } \mu_G(B_{2\rho}) \leq \mu_G(B_2) \leq \exp(d)\mu_G(X),
\end{equation*}
and so it follows that $h(B),h(B_\rho) \leq d$.  It follows that $\rho=\Omega(1/d^2)$ and
\begin{equation*}
|1-\gamma(x)| =O(\nu d +  \rho' d^3 ) \textrm{ for all }x \in B_{\rho'} \wedge B_\nu' \textrm{ and } \gamma \in\Spec_c(\mu_X).
\end{equation*}
Pick $\rho'=\Omega(\epsilon/d^3K^2)$ and $\nu=\Omega(\epsilon/K^2d)$ such that $B'':=B_{\rho'} \wedge B_\nu'$ has
\begin{equation*}
|1-\gamma(x)| \leq \epsilon/4K^2 \textrm{ for all }x \in B'' \textrm{ and } \gamma \in\Spec_c(\mu_X).
\end{equation*}
In particular
\begin{equation*}
\rho',\nu = \Omega(1/K^{2}d^{O(1)}).
\end{equation*}
For each $\lambda \in \Lambda$ write $B^{(\lambda)}$ for the Bohr set with frequency set $\{\lambda\}$ and width function the constant function $2$, thus $B_\nu' = \bigwedge_{\lambda \in \Lambda}{B^{(\lambda)}_\nu}$.  By Lemma \ref{lem.inter} we see that
\begin{equation*}
\mu_G(B''_\eta) \geq \mu_G(B_{\eta \rho'/2})\prod_{\lambda \in \Lambda}{\mu_G(B_{\eta \nu/2}^{(\lambda)})}. 
\end{equation*}
On the other hand since $B^{(\lambda)}$ has a frequency set of size $1$ we see (from (\ref{eqn.bohrarc})) that
\begin{equation*}
\mu_G(B^{(\lambda)}_{\eta'}) \geq \frac{1}{\pi}\min\{\eta',2\}.
\end{equation*}
Now, if $\eta \rho'/2\geq \kappa$ then we have that
\begin{equation*}
\mu_G(B''_\eta) \geq  (\eta\nu/2\pi)^r\mu_G(X),
\end{equation*}
and on the other we have that $\mu_G(B) \leq \exp(d)\mu_G(X)$.  Let $t \geq 1$ be a natural such that
\begin{equation*}
(16\pi(3t+1) \nu^{-1})^r\exp(d) < 2^t \textrm{ and } t=O(d\log 2dK).
\end{equation*}
Then if $\eta \in [1/8(3t+1),1/4(3t+1))$ we have
\begin{equation*}
\mu_G(B''_{(3t+1)\eta}) <  2^t\mu_G(B''_\eta).
\end{equation*}
We now apply Proposition \ref{prop.coset} to get that $B_{\eta}'' $ contains a proper coset progression $M$ of dimension at most $t$ and size $(2t)^{-O(t)}\mu_G(X)$.  The result is proved on an application of the next lemma provided such a choice of $\eta$ is possible.  This can be done if $\kappa$ can be chosen such that
\begin{equation*}
\frac{\rho'}{8(3t+1)} > \kappa,
\end{equation*}
which can be done with $\kappa = \Omega(\epsilon^{O(1)}K^{-O(1)})$, and working this back gives that $t=O(\epsilon^{-2}\log^62\epsilon^{-1}K)$ and the result.
\end{proof}
The next lemma is here simply to avoid interrupting the flow of the previous argument, and the hypotheses are set up purely for that setting.  The proof is simply a series of standard Fourier manipulations.
\begin{lemma}\label{lem.tech}
There is an absolute constant $c>0$ such that if $G$ is a finite Abelian group, $A,S,X \subset G$ have $|A+S| \leq K\min\{|A|,|S|\}$, $S' \subset S$ has $|S'| \geq |S|/K^2$, $k \geq \log \epsilon^{-1}K$ is a natural number such that
\begin{equation*}
|\mu_{-A} \ast 1_{A+S'} \ast \mu_{-S'}(x) - 1| \leq \epsilon/4 \textrm{ for all }x \in kX,
\end{equation*}
and $M$ is a set such that
\begin{equation}\label{eqn.import}
|1-\gamma(x)| \leq \epsilon/4K^2 \textrm{ for all }x \in M \textrm{ and } \gamma \in \Spec_c(\mu_X),
\end{equation}
then for any probability measure $\mu$ supported on $M$ we have
\begin{equation*}
\|1_{A+S}\ast \mu\|_{\ell^\infty(G)}\geq 1-\epsilon \textrm{ and } \|1_A \ast \mu\|_{\ell^\infty(G)} \geq (1-\epsilon)\frac{|A|}{|A+S|}.
\end{equation*}
\end{lemma}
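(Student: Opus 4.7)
The plan is to transfer the pointwise control on $F := \mu_{-A} \ast 1_{A+S'} \ast \mu_{-S'}$ from $kX$ to correlation with the probability measure $\mu_A \ast \mu_{S'} \ast \mu_X^{(k)} \ast \mu$, and then read off both conclusions by pushing $\mu$ (or $\mu_A \ast \mu$) to the opposite side of the inner product via the identity $\langle f \ast g, h\rangle = \langle f, h \ast \tilde g\rangle$.

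The first input is immediate: since $F$ is within $\epsilon/3$ of $1$ on $kX$ and $\mu_X^{(k)}$ is a probability measure supported on $kX$, we have $\langle F, \mu_X^{(k)}\rangle \geq 1 - \epsilon/3$. The crux is to show $\langle F, \mu_X^{(k)} \ast \mu\rangle$ differs from this by at most $O(\epsilon)$. By Parseval the difference equals
\begin{equation*}
\frac{1}{|G|}\sum_\gamma \wh F(\gamma)\,\overline{\wh{\mu_X}(\gamma)}^{\,k}\bigl(\overline{\wh\mu(\gamma)}-1\bigr),
\end{equation*}
which I split at $\Spec_c(\mu_X)$. On the spectrum the hypothesis gives $|\wh\mu(\gamma)-1|\leq \epsilon/3K^2$; off the spectrum the factor $|\wh{\mu_X}(\gamma)|^k \leq c^k$ is tiny, since $k \geq \log\epsilon^{-1}K$ and $c$ is a small absolute constant. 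To bound the remaining Fourier mass I use $|\wh F|\leq |\wh{\mu_A}|\,|\wh{\mu_{S'}}|\,|\wh{1_{A+S'}}|$ and Cauchy--Schwarz with the grouping $(\wh{\mu_A}\wh{\mu_X}^k)\cdot (\wh{\mu_{S'}}\wh{1_{A+S'}})$, for which the relevant $L^2$ norms are $\|\mu_A \ast \mu_X^{(k)}\|_{\ell^2}^2 \leq \|\mu_A\|_\infty=1/|A|$ and $\|\mu_{S'}\ast 1_{A+S'}\|_{\ell^2}^2\leq |A+S'|\leq K|A|$. The two pieces contribute at most $O(\epsilon K^{-3/2})$ and $O(c^k\sqrt K)$ respectively, giving $\langle F, \mu_X^{(k)} \ast \mu\rangle \geq 1 - O(\epsilon)$ after suitable choice of $c$.

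With this in hand, unfolding $F$ yields
\begin{equation*}
\langle F, \mu_X^{(k)}\ast\mu\rangle = \langle 1_{A+S'},\mu_A\ast\mu_{S'}\ast\mu_X^{(k)}\ast\mu\rangle \leq \langle 1_{A+S},\mu_A\ast\mu_{S'}\ast\mu_X^{(k)}\ast\mu\rangle,
\end{equation*}
using $S'\subset S$. For conclusion (a), push $\mu$ across: the right side equals $\langle 1_{A+S}\ast\tilde\mu,\mu_A\ast\mu_{S'}\ast\mu_X^{(k)}\rangle \leq \|1_{A+S}\ast\tilde\mu\|_{\ell^\infty}$, because $\mu_A\ast\mu_{S'}\ast\mu_X^{(k)}$ is a probability measure. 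The hypothesis $|1-\gamma(x)|\leq \epsilon/3K^2$ on $M$ is invariant under the swap $\mu\leftrightarrow\tilde\mu$ (since $|1-\gamma(x)|=|1-\gamma(-x)|$ and $M=-M$), so running the same proof with $\tilde\mu$ in place of $\mu$ produces $\|1_{A+S}\ast\mu\|_{\ell^\infty}\geq 1-\epsilon$. For (b), instead combine $\mu_A$ with $\mu$ and push the rest to the other side:
\begin{equation*}
\langle \mu_A\ast\mu,\,1_{A+S}\ast\mu_{-S'}\ast\mu_{-X}^{(k)}\rangle \leq \|\mu_A\ast\mu\|_{\ell^\infty}\cdot |A+S|,
\end{equation*}
since $1_{A+S}\ast\mu_{-S'}\ast\mu_{-X}^{(k)}$ has $\ell^1$-norm $|A+S|$; this gives $\|1_A\ast\mu\|_{\ell^\infty}=|A|\,\|\mu_A\ast\mu\|_{\ell^\infty}\geq (1-\epsilon)|A|/|A+S|$.

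The main obstacle is the Fourier estimate in the second step. The set $X$ can be exponentially smaller than $A+S$ (this is precisely how $X$ arises in Theorem~\ref{thm.core}), so a naive application of Cauchy--Schwarz involving $\|\mu_X^{(k)}\|_{\ell^2}^2\leq 1/|X|$ would blow up catastrophically. The point is to absorb $\wh{\mu_X}^k$ into $\wh{\mu_A}$ before Cauchy--Schwarz, replacing $\|\mu_X^{(k)}\|_{\ell^2}^2$ by $\|\mu_A\ast\mu_X^{(k)}\|_{\ell^2}^2\leq 1/|A|$, which trades the tiny size of $X$ for the much larger size of $A$ and makes the whole estimate depend only on $K$.
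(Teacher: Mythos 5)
Your proof is correct and takes essentially the same route as the paper: integrate the pointwise hypothesis against $\mu_X^{(k)}$, pass to the Fourier side, split at $\Spec_c(\mu_X)$, and control both pieces through a Cauchy--Schwarz/Parseval bound on $\sum_\gamma|\wh{1_{A+S'}}\wh{\mu_A}\wh{\mu_{S'}}\wh{\mu_X}^k|$, which is exactly the paper's decomposition into $S_1,S_2,S_3$. The only cosmetic differences are your Cauchy--Schwarz grouping $(\wh{\mu_A}\wh{\mu_X}^k)\cdot(\wh{\mu_{S'}}\wh{1_{A+S'}})$ in place of the paper's step of first extracting $\sup_\gamma|\wh{1_{A+S'}}(\gamma)|$ (both give a $K^{O(1)}$ bound, which is all that is needed), and your explicit appeal to the invariance of hypothesis (\ref{eqn.import}) under $M\mapsto -M$ so as to pass from $\tilde\mu$ back to $\mu$ --- a detail the paper's ``by averaging'' leaves implicit.
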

\begin{proof}
Integrating the first hypothesis we get that
\begin{equation*}
|\langle \mu_{-A} \ast 1_{A+S'} \ast \mu_{-S'}, \mu_X^{(k)}\rangle - 1| \leq \epsilon/4,
\end{equation*}
where $\mu_X^{(k)}$ denotes the $k$-fold convolution of $\mu_X$ with itself.  By Fourier inversion we have that
\begin{equation}\label{eqn.qsw}
\left|\sum_{\gamma \in \wh{G}}{\wh{1_{A+S'}}(\gamma)\overline{\wh{\mu_{A}}(\gamma)\wh{\mu_{S'}}(\gamma)\wh{\mu_X}(\gamma)^k}} -1\right| \leq \epsilon/4.
\end{equation}
The triangle inequality, Cauchy-Schwarz and Parseval's theorem in the usual way tell us that
\begin{eqnarray}\nonumber 
 \sum_{\gamma \in \wh{G}}{|\wh{1_{A+S'}}(\gamma)\wh{\mu_A}(\gamma)\wh{\mu_{S'}}(\gamma)|} & \leq & \mu_G(A+S')\|\wh{\mu_A}\|_{\ell^2(\wh{G})}\|\wh{\mu_{S'}}\|_{\ell^2(\wh{G})}\\ \label{eqn.fr} & = & \frac{\mu_G(A+S')}{\sqrt{\mu_G(A)\mu_G(S')}} \leq K^2.
\end{eqnarray}

Then, by the triangle inequality for any probability measure $\mu$ supported on $M$ we have that
\begin{equation}\label{eqn.intest}
|\wh{\mu}(\gamma) -1| \leq \epsilon/4K^2 \textrm{ for all } \gamma \in \Spec_c(\mu_X).
\end{equation}
We conclude that 
\begin{equation*}
E:=|\langle 1_{A+S'}\ast \mu,\mu_A \ast \mu_{S'} \ast \mu_X^{(k)} \ast \mu\rangle -1| = \left|\sum_{\gamma \in \wh{G}}{\wh{1_{A+S'}}(\gamma)\wh{\mu}(\gamma)\overline{\wh{\mu_{A}}(\gamma)\wh{\mu_{S'}}(\gamma)\wh{\mu_X}(\gamma)^k\wh{\mu}(\gamma)}} -1\right|
\end{equation*}
is at most $S_1+S_2+S_3$ where
\begin{equation*}
S_1:= \left|\sum_{\gamma \not \in \Spec_c(\mu_X)}{\wh{1_{A+S'}}(\gamma)\overline{\wh{\mu_{A}}(\gamma)\wh{\mu_{S'}}(\gamma)\wh{\mu_X}(\gamma)^k}(|\wh{\mu}(\gamma)|^2-1)}\right|,
\end{equation*}
\begin{equation*}
S_2:= \left|\sum_{\gamma \in \Spec_c(\mu_X)}{\wh{1_{A+S'}}(\gamma)\overline{\wh{\mu_{A}}(\gamma)\wh{\mu_{S'}}(\gamma)\wh{\mu_X}(\gamma)^k}(|\wh{\mu}(\gamma)|^2-1)}\right|,
\end{equation*}
and
\begin{equation*}
S_3:= \left|\sum_{\gamma \in \wh{G}}{\wh{1_{A+S'}}(\gamma)\overline{\wh{\mu_{A}}(\gamma)\wh{\mu_{S'}}(\gamma)\wh{\mu_X}(\gamma)^k}}-1\right|.
\end{equation*}
By the triangle inequality and (\ref{eqn.fr}) we see that
\begin{equation*}
S_1 \leq \sup_{\gamma \not \in \Spec_c(\mu_X)}{|\wh{\mu_X}(\gamma)|^k}.\sum_{\gamma \in \wh{G}}{|\wh{1_{A+S'}}(\gamma)\wh{\mu_A}(\gamma)\wh{\mu_{S'}}(\gamma)|}  \leq c^kK^2 \leq \epsilon/4
\end{equation*}
for a suitable choice of $c=\Omega(1)$ since $k\geq \log \epsilon^{-1}K$;  by (\ref{eqn.fr}) and (\ref{eqn.intest}) we see that
\begin{equation*}
S_2 \leq 2\sup_{\gamma \in \Spec_c(\mu_X)}{|\wh{\mu}(\gamma)-1|}.\sum_{\gamma \in \wh{G}}{|\wh{1_{A+S'}}(\gamma)\wh{\mu_A}(\gamma)\wh{\mu_{S'}}(\gamma)|}  \leq 2(\epsilon/4K^2).K^2 \leq \epsilon/2;
\end{equation*}
and finally by (\ref{eqn.qsw}) we see that $S_3 \leq \epsilon/4$, so that $E \leq \epsilon$.  It follows from this that
\begin{equation*}
\langle 1_{A+S'}\ast \mu, \mu_A \ast \mu_{S'} \ast \mu_X^{(k)} \ast \mu\rangle \geq 1-\epsilon,
\end{equation*}
and hence by averaging that
\begin{equation*}
\|1_{A+S'} \ast \mu\|_{L^\infty(G)} \geq 1-\epsilon \textrm{ and } \|1_{A} \ast \mu\|_{L^\infty(G)} \geq (1-\epsilon)\frac{\mu_G(A)}{\mu_G(A+S')}.
\end{equation*}
The lemma is proved.
\end{proof}
It is worth making a couple of remarks before continuing.  First, Theorem \ref{thm.core} can be extended to infinite Abelian groups by embedding the sets there in a finite group via a sufficiently large Fre{\u\i}man isomorphism.  This is the finite modelling argument of Green and Ruzsa \cite[Lemma 2.1]{greruz::0}, but we shall not pursue it here.

The expected $\epsilon$-dependence in Theorem \ref{thm.core} may be less clear than the $K$-dependence.  The argument we have given works equally well for the so-called popular difference set in place of $1_{A+S}$, that is the set
\begin{equation*}
D(A,S):=\{x \in G: 1_A \ast 1_S(x)\geq c\epsilon/K\}
\end{equation*}
for sufficiently small $c$.  On the other hand Wolf, in \cite{wol::0}, develops the Niveau set construction of Ruzsa \cite{ruz::4,ruz::6}, to show that even finding a large sumset in such popular difference sets is hard, and it seems likely that her arguments can be adapted to cover the case of $D(A,S)$ containing a proportion $1-\epsilon$ of a sumset.

Understanding this, even in the model setting of $G=\F_2^n$, would be of great interest since a better $\epsilon$-dependence would probably yield better analysis of inner products of the form $\langle 1_A \ast 1_S,1_T\rangle$ which are of importance in, for example, Roth's theorem \cite{rot::0,rot::}. 

We are now in a position to prove Theorem \ref{thm.main} by an easy pigeonhole argument.
\begin{proof}[Proof of Theorem \ref{thm.main}]  Fre{\u\i}man $2$-embed the sets $A$ and $S$ into a finite group (via, for example, the method of \cite[Lemma 2.1]{greruz::0}); if we can prove the result there then it immediately pulls back.

Apply Theorem \ref{thm.core} with $\epsilon=1/2(1+\sqrt{2})$ to get a proper $d$-dimensional coset progression $M$.  Note that we may assume the progression is symmetric by translating it and possibly shrinking it by a factor of $\exp(d)$; this has no impact on the bounds.  Thus we put
\begin{equation*}
M=H+\{x_1.l_1+\dots+x_d.l_d: |l_i| \leq L_i \textrm{ for all }1 \leq i \leq d\}
\end{equation*}
where $L_1,\dots, L_d \in \N$, $H \leq G$ and $x_1,\dots,x_d \in G$. Write
\begin{equation*}
M_\eta:=H+\{x_1.l_1+\dots+x_d.l_d: |l_i| \leq \eta L_i \textrm{ for all }1 \leq i \leq d\},
\end{equation*}
and note that $|M_1| \leq \exp(O(d))|M_{1/2}|$.  On the other hand if $j \eta \leq 1/2$ then we have
\begin{equation*}
M_{1/2} \subset M_{1/2+\eta} \subset \dots \subset M_{1/2+j\eta} = M_1,
\end{equation*}
and so it follows that there is some $\eta = \Omega(1/d)$ and $i\leq j = O(d)$ such that
\begin{equation*}
|M_{1/2+i\eta}| \leq 2^{1/2}|M_{1/2+(i-1)\eta}|.
\end{equation*}
Since $\eta = \Omega(1/d)$ we easily have that $|M_\eta| = \exp(-O(d\log d))|M_1|$.  On the other hand if we apply the conclusion of Theorem \ref{thm.core} with
\begin{equation*}
\mu=\frac{1_{M_{1/2+i\eta}}+1_{M_{1/2+(i-1)\eta}}}{|M_{1/2+i\eta}|+|M_{1/2+(i-1)\eta}|}
\end{equation*}
we get an element $x$ such that
\begin{equation*}
|(x+A+S)\cap M_{1/2+i\eta}|+|(x+A+S)\cap M_{1/2+(i-1)\eta}|
\end{equation*}
is at least
\begin{equation*}
(1-\epsilon)(|M_{1/2+i\eta}|+|M_{1/2+(i-1)\eta}|).
\end{equation*}
But then if $z \in M_{\eta}$ we get that
\begin{eqnarray*}
1_{A+S} \ast 1_{-(A+S)}(z) & = & 1_{x+A+S} \ast 1_{-(x+A+S)}(z)\\ & \geq & 1_{(x+A+S) \cap M_{1/2+i\eta}}\ast 1_{-(x+A+S)\cap M_{1/2+(i-1)\eta}}(z)\\ & \geq & |(x+A+S) \cap M_{1/2+i\eta}| + |z+((x+A+S)\cap M_{1/2+(i-1)\eta})|\\ & & - |((x+A+S) \cap M_{1/2+i\eta}) \cup (z+((x+A+S)\cap M_{1/2+(i-1)\eta}))|\\ & \geq & |(x+A+S)\cap M_{1/2+i\eta}|+|(x+A+S)\cap M_{1/2+(i-1)\eta}| - |M_{1/2+i\eta}|\\ & \geq & (1-(1+\sqrt{2})\epsilon)|M_{1/2+(i-1)\eta}|>0,
\end{eqnarray*}
and it follows that $(A-A)+(S-S)$ contains $M_\eta$.  Tracking through the bounds we get the result.
\end{proof}

\section{Concluding remarks and applications}\label{sec.appln}

To begin with we should remark that in the case when $G$ has bounded exponent or is torsion-free, we can get slightly better bounds and the argument is much simpler because of the presence of a good modelling lemmas.  In the first case we get the following result, a proof of which (in the case $G=\F_2^n$) is contained in the appendix as it is so short.
\begin{theorem}[Bogolyubov-Ruzsa Lemma for bounded exponent Abelian groups]\label{thm.bogexp}
Suppose $G$ is an Abelian group of exponent $r$ and $A,S \subset G$ are finite non-empty sets such that $|A+S| \leq K\min\{|A|,|S|\}$.  Then $(A-A)+(S-S)$ contains a subspace $V$ of size $\exp(-O_r(\log^42K))|A+S|$. 
\end{theorem}
In the second, the material of \S\S\ref{sec.bohrprop}--\ref{sec.lcp} can be replaced similar but more standard arguments because of the following modelling lemma.
\begin{lemma}[Modelling for torsion-free Abelian groups, {\cite[Theorem 3.5]{ruz::03}}]
Suppose that $G$ is a torsion-free Abelian group, $A \subset G$ is a finite non-empty set and $k \geq 2$ is a natural.  Then for every $q \geq |kA-kA|$ there is a set $A' \subset A$ with $|A'| \geq |A|/k$ such that $A'$ is Fre{\u\i}man $k$-isomorphic to a subset of $\Z/q\Z$.
\end{lemma}
\begin{theorem}[Bogolyubov-Ruzsa lemma for torsion-free Abelian groups]
Suppose that $G$ is a torsion-free Abelian group and $A,S \subset G$ are finite non-empty sets such that $|A+S| \leq K\min\{|A|,|S|\}$.  Then $(A-A)+(S-S)$ contains a proper symmetric $d(K)$-dimensional coset progression $M$ of size $\exp(-h(K))|A+S|$.  Moreover, we may take $d(K)=O(\log^42K)$ and $h(K)=O(\log^42K\log2\log 2K)$.
\end{theorem}
Returning to Theorem \ref{thm.main} it is easy to see that we must have $d(K),h(K)=\Omega(\log K)$ by considering a union of $\sqrt{K}$ coset progressions of dimension $\log_2 \sqrt{K}$, and even achieving this bound may be hard without refining the definition of a coset progression.  (See the comments of Green in \cite{tao::5} for a discussion of this.)

The paper \cite{sch::1} was a major breakthrough in proving the first good bounds for (a slight variant of) Theorem \ref{thm.main}; it was essentially shown that one could take
\begin{equation*}
d(K),h(K)=O(\exp(O(\sqrt{\log K})))
\end{equation*}
for torsion-free or bounded-exponent Abelian groups.

Indeed, it should be clear that while we do not use \cite{sch::1} directly in the proof of Theorem \ref{thm.main}, it has had a considerable influence on the present work and the applications which now follow are from the end of that paper as well.

\subsection*{Fre{\u\i}man's theorem} As an immediate corollary of Theorem \ref{thm.main} and Chang's covering lemma we have the following.
\begin{theorem}[Fre{\u\i}man's theorem for Abelian groups]  Suppose that $G$ is an (discrete) Abelian group and $A \subset G$ is finite with $|A\pm A| \leq K|A|$.  Then $A$ is contained in a $d(K)$-dimensional coset progression $M$ of size at most $\exp(h(K))|A|$.  Moreover, we may take $d(K),h(K)=O(K\log^{O(1)}2K)$.
\end{theorem}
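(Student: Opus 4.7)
The plan is to combine Theorem \ref{thm.main} with Chang's covering lemma from Section \ref{sec.cov} in the standard manner.

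First, set $S := A$; this is admissible since $|A \pm A| \leq K|A|$. Applying Theorem \ref{thm.main} produces a proper symmetric coset progression $M \subset (A-A)+(A-A) = 2A-2A$ of dimension $d_0 = O(\log^6 K)$ and size $|M| \geq \exp(-O(\log^6 K \log \log K))|A|$.

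By the Pl{\"u}nnecke-Ruzsa estimates one has $|nA| \leq K^n|A|$ for all $n \geq 1$, and since $M \subset 2A-2A$ we have $|A+M| \leq |3A-2A| \leq K^5|A|$, so the ratio $L := |A+M|/|M|$ satisfies $L \leq \exp(O(\log^6 K \log \log K))$. Applying Chang's covering lemma to the pair $(A,M)$ with these parameters produces a set $T$ with $|T| = O(K \log KL) = O(K \log^6 K \log\log K) = O(K \log^{O(1)} K)$ such that $A \subset \Span(T) + M - M$.

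It remains to package the right-hand side as a proper coset progression of the claimed dimension and size. Standard arguments (essentially those of Green and Ruzsa \cite{BJGIZR1}, or as in \cite{TCTVHV}) restrict $\Span(T)$ to a sufficiently controlled window so as still to cover $A$, producing a coset progression $M' \supset A$ of dimension $|T| + d_0 = O(K \log^{O(1)} K)$; tracking the side-length contributions, together with $|M-M| \leq 2^{d_0}|M| \leq \exp(O(\log^6 K))|A|$, gives the size bound $|M'| \leq \exp(O(K \log^{O(1)} K))|A|$.

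The conceptual content is entirely provided by Theorem \ref{thm.main}; the only real loss---a multiplicative factor of $O(K)$ in both dimension and entropy---enters through Chang's covering lemma and is the main obstacle to a sharper bound. It is intrinsic to any covering-based passage from a statement about $2A-2A$ to a containment statement for $A$ itself, and accounts for the well-known gap between Bogolyubov-Ruzsa-type polylogarithmic bounds and Freiman-type polynomial bounds in $K$.
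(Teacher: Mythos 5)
Your proof is correct and is precisely the argument the paper has in mind, since the paper itself only remarks that this is ``an immediate corollary of Theorem \ref{thm.main} and Chang's covering lemma.'' The one small inaccuracy is the phrase about restricting $\Span(T)$ to a ``window'': in this paper's notation $\Span(T)$ is already the finite set of $\{-1,0,1\}$-combinations, a generalised progression of dimension $|T|$ and size at most $3^{|T|}$, so $\Span(T)+M-M$ is directly a coset progression of dimension $|T|+d_0$ and of the claimed size without any further restriction step.
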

By considering a union of $K$ dissociated translates of a coset progression it is easy to see that we must have $d(K),h(K)=\Omega(K)$, so the result is close to best possible.

Green and Ruzsa in \cite{greruz::0} provided the first bounds of $d(K),h(K)=O(K^{4+o(1)})$, and the peppering of their work throughout this paper should indicate the importance of their ideas.

Schoen in \cite{sch::1} improved the bounds to $O(K^{3+o(1)})$ and to $O(K^{1+o(1)})$ for certain classes of groups, and in \cite{cwasch::} the structure is further elucidated with particular emphasis on getting good control on the dimension.

\subsection*{The $U^3$-inverse theorem} Theorem \ref{thm.main} can be inserted into the various $U^3$-inverse theorems of Tao and Green \cite{gretao::1} for finite Abelian groups of odd order, and Samorodnitsky \cite{sam::} (see also \cite{wol::}) for $\F_2^n$ to improve the bounds there.  In particular one gets the following.
\begin{theorem}[$U^3(\F_2^n)$-inverse theorem]
Suppose that $f \in L^\infty(\F_2^n)$ has $\|f\|_{U^3(\F_2^n)} \geq \delta \|f\|_{L^\infty(\F_2^n)}$.  Then there is a quadratic polynomial $q:\F_2^n \rightarrow \F_2$ such that
\begin{equation*}
|\langle f,(-1)^q\rangle_{L^2(\F_2^n)}| \geq \exp(-O(\log^{O(1)}2\delta^{-1}))\|f\|_{L^\infty(\F_2^n)}.
\end{equation*}
\end{theorem}
In fact the connection between good bounds in results of this type and good bounds in Fre{\u\i}man-type theorems is quite clearly developed by Green and Tao in \cite{gretao::6} and Lovett in \cite{lov::}.

\subsection*{Long arithmetic progressions in sumsets} The question of finding long arithmetic progressions in sets of integers is one of central interest in additive combinatorics.  The basic question has the following form: suppose that $A_1,\dots,A_k \subset \{1,\dots,N\}$ all have density at least $\alpha$.  How long an arithmetic progression can we guarantee that $A_1+\dots+A_k$ contain?

For one set this is addressed by the notoriously difficult Szemer{\'e}di's theorem \cite{sze::,sze::0} where the best quantitative work is that of Gowers \cite{gow::4,gow::0}; for two sets the longest progression is much longer with the state of the art due to Green \cite{gre::0}; for three sets or more the results get even stronger with the work of Fre{\u\i}man, Halberstam and Ruzsa \cite{frehalruz::}; and finally for eight sets or more, longer again by the recent work of Schoen \cite{sch::1}.

Theorem \ref{thm.main} yields an immediate improvement for the case of four sets or more.
\begin{theorem}
Suppose that $A_1,\dots,A_4 \subset \{1,\dots,N\}$ all have density at least $\alpha$.  Then $A_1+\dots+A_4$ contains an arithmetic progression of length $N^{O(\log^{-O(1)}2\alpha^{-1})}$.
\end{theorem}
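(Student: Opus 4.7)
The plan is to combine a standard pigeonholing step with Theorem \ref{thm.main} applied to suitable subsets of $A_1,\ldots,A_4$ in $\Z$, and then to extract an honest arithmetic progression from the resulting proper generalised progression via an AM--GM argument.

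First, I would pigeonhole on the identity $\sum_{s\in\Z}|A_1\cap(s-A_2)|=|A_1||A_2|$: since the non-zero summands are supported on $A_1+A_2\subset\{2,\dots,2N\}$, there exists $s$ with $|B_1|\geq \alpha^2N/2$, where $B_1:=A_1\cap(s-A_2)$; similarly pick $t$ with $|B_2|\geq \alpha^2N/2$ for $B_2:=A_3\cap(t-A_4)$. Since $s-B_1\subset A_2$, we have $B_1+(s-B_1)\subset A_1+A_2$ and therefore $s+(B_1-B_1)\subset A_1+A_2$, and analogously for the other pair, so
\[
A_1+A_2+A_3+A_4\supset(s+t)+(B_1-B_1)+(B_2-B_2).
\]

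Next, the trivial bound $|B_1+B_2|\leq 2N$ combined with $|B_i|\geq\alpha^2N/2$ gives $|B_1+B_2|\leq (4/\alpha^2)\min\{|B_1|,|B_2|\}$. I then apply Theorem \ref{thm.main} in $\Z$ with $K=4/\alpha^{2}$ to obtain a proper symmetric coset progression $M\subset(B_1-B_1)+(B_2-B_2)$ of some dimension $d=O(\log^6\alpha^{-1})$ and size
\[
|M|\geq\exp(-O(\log^6\alpha^{-1}\log\log\alpha^{-1}))\cdot\alpha^2N.
\]
Since $\Z$ has no non-trivial finite subgroups, the subgroup part of the coset progression is trivial, and $M$ is in fact a translate of a proper $d$-dimensional arithmetic progression with side-lengths $L_1,\ldots,L_d$ satisfying $L_1\cdots L_d=|M|$.

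Finally, by AM--GM some $L_i\geq|M|^{1/d}$, and the corresponding coordinate axis of $M$ is a genuine arithmetic progression of that length sitting inside $A_1+\cdots+A_4$. Since $1/d=\log^{-O(1)}\alpha^{-1}$, the $\alpha$-dependent factors in
\[
|M|^{1/d}\geq N^{1/d}\cdot\alpha^{2/d}\cdot\exp(-O(\log\log\alpha^{-1}))
\]
combine into $(\log\alpha^{-1})^{-O(1)}$, and for $N$ large in terms of $\alpha$ this yields an AP of length at least $N^{\log^{-O(1)}\alpha^{-1}}$, as claimed. The main substance is Theorem \ref{thm.main} itself; the pigeonhole reduction to two sets of relative doubling $O(\alpha^{-2})$, the torsion-free collapse of the subgroup part, and the AM--GM extraction of an AP are routine. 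I expect no genuine obstacle: the four-set structure is precisely what permits a single direct invocation of Bogolyubov--Ruzsa with no iteration, in contrast to the two- or three-set cases where more delicate arguments are needed.
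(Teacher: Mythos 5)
Your proof is correct and follows essentially the same route as the paper: a pigeonhole reduction to sets with relative doubling $\alpha^{-O(1)}$, a single invocation of Theorem \ref{thm.main}, and then extraction of a long one-dimensional progression from the resulting proper coset progression using torsion-freeness of $\Z$ and pigeonhole/AM--GM on the side lengths. The only cosmetic difference is that you pair up $(A_1,A_2)$ and $(A_3,A_4)$ to produce two sets $B_1,B_2$ and apply the theorem to them, whereas the paper finds a single symmetric set $A$ of density $\alpha^{O(1)}$ contained (up to translation) in all four $A_i$ and applies the theorem to $A$ with itself; both reductions are routine and lead to the same bound.
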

\begin{proof}
Since $|A_i+A_j| \leq 2\alpha^{-1}|A_i|$ for all $i,j$ we have, by averaging, that there is a symmetric set $A$ of density $\alpha^{O(1)}$ such that $A_1,\dots,A_4$ each contains a translate of $A$. In particular, the longest progression in $A-A+A-A$ is contained in a translate of $A_1+A_2+A_3+A_4$. 

Now, by Theorem \ref{thm.main} the set $A-A+A-A$ contains an $O(\log^{O(1)}\alpha^{-1})$-dimensional coset progression $M$ of size $\exp(-O(\log^{O(1)}\alpha^{-1}))N$.  Since $\Z$ is torsion-free the progression is just a generalised progression which certainly contains a $1$-dimensional progression of length $|M|^{1/\dim M}$.  The result is proved.
\end{proof}
It is not clear that this result gives the best possible conclusion for $k$ sets as $k$ tends to infinity, although if one were interested in this no doubt some improvement could be squeezed out by delving into the main proof.

\subsection*{$\Lambda(4)$-estimate for the squares} Inserting Theorem \ref{thm.main} into the work of \cite{cha::1} (itself developed from an argument of Bourgain in \cite{johlin::}) yields the following $\Lambda(4)$-estimate for the squares.
\begin{theorem}
Suppose that $n_1,\dots,n_k$ are naturals.  Then
\begin{equation*}
\int{\left|\sum_{i=1}^k{\exp(2\pi i n_i^2\theta)}\right|^4d\theta} =O(k^3\exp(-\Omega(\log^{\Omega(1)} 2k))).
\end{equation*}
\end{theorem}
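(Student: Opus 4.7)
The plan is to reduce the $L^4$-integral to an additive-energy count and then combine a BSG-style energy increment argument with Theorem \ref{thm.main} inside the scheme developed by Chang in \cite{MCC.PER}. By orthogonality
\[
\int\left|\sum_{i=1}^k e(n_i^2\theta)\right|^4 d\theta \;=\; E(A) \;:=\; |\{(a,b,c,d)\in A^4:a+b=c+d\}|,
\]
where $A:=\{n_1^2,\dots,n_k^2\}$ (which we may take to be distinct). The target inequality reads $E(A)\leq k^3\exp(-\Omega(\log^{\Omega(1)}k))$, so we argue by contraposition and assume $E(A)\geq k^3/K$ for some $K\geq 2$ that we aim to force to be huge in $k$.

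First I would apply the Balog--Szemer\'edi--Gowers theorem (in its sharp Schoen--Gowers form) to extract a subset $A'\subset A$ with $|A'|\geq k/\log^{O(1)}K$ and doubling $|A'+A'|\leq \log^{O(1)}K\cdot |A'|$, where the use of BSG replaces the raw energy bound by a small-doubling hypothesis on a substantial portion of $A$. Then I would feed $A'$ into Theorem \ref{thm.main} (or equivalently the Freiman-type corollary stated in \S\ref{sec.appln}) to obtain a proper coset progression $P\supset A'$ with
\[
\dim P = O(\log^{O(1)} K) \quad\text{and}\quad |P|\leq \exp(O(\log^{O(1)}K))|A'|.
\]
This is the step where the improvement from Theorem \ref{thm.main} enters: the dimension bound is polylogarithmic rather than polynomial in $K$, and this is what converts Bourgain's original $o(k^3)$ bound into a bound with a sub-exponential saving.

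The main obstacle, and the technical core of the argument, is the arithmetic input: bounding the number of perfect squares in a $d$-dimensional (integer) coset progression $P$ of size $N$ contained in an interval $[-M,M]$. The relevant Weyl/Bombieri--Pila style estimate, which is the content of the Bourgain--Chang approach in \cite{WBJJL,MCC.PER}, yields a bound of the shape $|\{\text{squares}\}\cap P|\leq |P|^{1-c/d}$ for an absolute $c>0$ (after handling the low-dimensional genuine-progression factors of $P$ separately and dealing with the subgroup factor, which in $\mathbb{Z}$ is trivial). Combining this with $|A'|\leq |P|^{1-c/d}$ and $|P|\leq \exp(O(\log^{O(1)}K))|A'|$ forces
\[
|A'|^{c/d}\leq \exp(O(\log^{O(1)}K)),
\]
so that $\log|A'|\leq d\cdot \log^{O(1)}K = \log^{O(1)}K$. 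Balancing this against the lower bound $|A'|\geq k/\log^{O(1)}K$ forces $\log K\geq \log^{\Omega(1)}k$, whence $E(A)\leq k^3/K\leq k^3\exp(-\log^{\Omega(1)}k)$ as required.

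In practice I would not reprove the squares-in-progression estimate from scratch but cite the argument in \cite{MCC.PER}, simply noting that every occurrence of a Freiman/Bogolyubov bound in that paper is now to be replaced by the sharper Theorem \ref{thm.main}; tracking the exponents through Chang's bookkeeping then gives the displayed saving. The only delicate point is to check that the $\exp(O(\log^{O(1)}K))$ loss in the size of the containing progression from Theorem \ref{thm.main} does not overwhelm the $|P|^{1-c/d}$ gain from the arithmetic input, which it does not precisely because $d$ is itself only $\log^{O(1)}K$.
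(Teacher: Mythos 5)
Your proposal matches the route the paper sketches---BSG to pass from high energy to small doubling on a large subset, Theorem \ref{thm.main} to place that subset in a coset progression of dimension $\log^{O(1)}K$ and size $\exp(O(\log^{O(1)}K))|A'|$, and then the Bourgain--Chang count of squares in a generalised progression of bounded dimension---and the final balancing of exponents is done correctly. One small inaccuracy: even in its sharpened form BSG loses a factor $K^{O(1)}$ (not $\log^{O(1)}K$) in both the density of $A'$ and its doubling, but since $K$ is ultimately forced to be $\exp(\Omega(\log^{\Omega(1)}k))$ this polynomial-versus-polylogarithmic distinction is absorbed and does not affect the conclusion.
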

This is essentially equivalent to inserting Theorem \ref{thm.main} into the proof of \cite[Theorem 8]{sch::1} and Gowers' \cite{gow::4} version of the Balog-Szemer{\'e}di Lemma \cite{balsze::}.  In any case a conjecture of Rudin \cite{rud::0} suggests that the bound $O(k^{2+o(1)})$ is likely to be true, and the above is not even a power-type improvement on the trivial upper bound of $k^3$.

\subsection*{The Konyagin-{\L}aba theorem} Theorem \ref{thm.main} inserted into the argument at the end of \cite{sch::1} yields the following quantitative improvement to a result from \cite{konaba::}.
\begin{theorem}[Konyagin-{\L}aba theorem]
Suppose that $A$ is a set of reals and $\alpha \in \R$ is transcendental.  Then 
\begin{equation*}
|A+\alpha.A| = \exp(\Omega(\log^{\Omega(1)}2|A|))|A|.
\end{equation*}
\end{theorem}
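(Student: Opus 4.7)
The plan is to argue by contradiction: assume $|A+\alpha A|\leq K|A|$ and show that this forces $K\geq \exp(\Omega(\log^{\Omega(1)}|A|))$. Since $|\alpha A|=|A|$, repeated applications of Pl\"unnecke--Ruzsa give $|nA+m\alpha A|\leq K^{O(n+m)}|A|$ for all $n,m\geq 0$, and in particular $|A+A|\leq K^{O(1)}|A|$ and $|r(A+\alpha A)-r(A+\alpha A)|\leq K^{O(r)}|A|$ for any fixed $r$.

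With this in hand, apply Theorem \ref{thm.main} (with $S=A$) to obtain a proper symmetric coset progression $M\subset 2A-2A$ of dimension $d=O(\log^6 K)$ and size $|M|\geq \exp(-O(\log^6 K\log\log K))|A|$. Since $\R$ is torsion-free, this is a genuine proper progression $M=\{n_1e_1+\cdots+n_de_d:|n_j|\leq N_j\}$, and properness ensures that the $e_j$ satisfy $\sum c_je_j=0\Rightarrow c=0$ for all integer vectors $c$ with $|c_j|\leq 2N_j$. The crucial comparison is of $|M+\alpha M|$ from above and below. From above, $\alpha M\subset 2\alpha A-2\alpha A$ yields $M+\alpha M\subset 2(A+\alpha A)-2(A+\alpha A)$, so $|M+\alpha M|\leq K^{O(1)}|A|$. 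From below, any coincidence $m_1+\alpha m_2=m_3+\alpha m_4$ in $M+\alpha M$ translates to an integer relation $\sum u_je_j=\alpha\sum v_je_j$ with $|u_j|,|v_j|\leq 2N_j$. Writing $E:=\Span_{\mathbb{Q}}(e_1,\ldots,e_d)$, the transcendence of $\alpha$ means that $E$ cannot be invariant under multiplication by $\alpha$ (otherwise $E$ would be a nonzero finite-dimensional module over the infinite-dimensional ring $\mathbb{Q}[\alpha]$), so $\dim_{\mathbb{Q}}(E\cap \alpha E)\leq d-1$ and the solution lattice $\{(u,v)\in\Z^{2d}:\sum u_je_j=\alpha\sum v_je_j\}$ has rank at most $d-1$. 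A Minkowski-type count in the box (after rebalancing the sides $N_j$ by passing to a suitable sub-progression, which costs at most a factor $\exp(O(d))$) then bounds the number of integer solutions by $\exp(O(d))|M|^{(d-1)/d}$, and so by Cauchy--Schwarz
\begin{equation*}
|M+\alpha M|\geq \exp(-O(d))|M|^{1+1/d}.
\end{equation*}

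Combining the two estimates yields $|M|^{1+1/d}\leq \exp(O(d))K^{O(1)}|A|$; substituting the lower bound on $|M|$ and rearranging gives $\log|A|=O(d\log^6K\log\log K)=O(\log^{13}K\log\log K)$, whence $K\geq \exp(\Omega(\log^{\Omega(1)}|A|))$. The main obstacle is the lower bound on $|M+\alpha M|$: transcendence of $\alpha$ is precisely what cuts the rank of the solution lattice from the trivial $2d$ down to at most $d-1$, but turning this into a clean polynomial-savings estimate in a box with possibly very unbalanced side lengths $N_j$ requires a careful successive-minima argument, and this step also dictates the precise value of the $\Omega(1)$ exponent in the conclusion. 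Everything else is routine Pl\"unnecke bookkeeping combined with the polylogarithmic dimension and entropy estimates supplied by Theorem \ref{thm.main}.
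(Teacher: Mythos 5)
Your overall strategy matches what the paper does (the paper does not give an explicit proof, deferring to the argument at the end of \cite{TS.NOB} with Theorem~\ref{thm.main} substituted in): apply Pl\"unnecke to pass to small doubling, invoke Theorem~\ref{thm.main} to find a proper progression $M\subset 2A-2A$, and then contrast upper and lower bounds on $|M+\alpha M|$. The Pl\"unnecke bookkeeping and the dimension/size accounting at the end are fine.

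The genuine gap is in the heart of the argument, the lower bound $|M+\alpha M|\geq \exp(-O(d))|M|^{1+1/d}$, and there are two distinct problems with the reasoning you offer for it. First, the inference ``$\dim_{\Q}(E\cap\alpha E)\leq d-1$ hence the collision lattice $\Lambda=\{(u,v)\in\Z^{2d}:\sum u_je_j=\alpha\sum v_je_j\}$ has rank at most $d-1$'' is false unless the $e_j$ are $\Q$-linearly independent, which properness does not give you. Properness only forbids relations $\sum c_je_j=0$ with all $|c_j|\leq 2N_j$; long relations are allowed, so $d':=\dim_{\Q}\Span(e_j)$ can be strictly less than $d$. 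In that case $\ker(\phi)$ (where $\phi(u)=\sum u_je_j$) has rank $d-d'\geq 1$, and $\Lambda\supset\ker(\phi)\times\ker(\phi)$ together with the $\alpha E\cap E$ contribution gives $\rk\Lambda\leq 2(d-d')+(d'-1)=2d-d'-1$, which exceeds $d-1$. (For example $e_1=1$, $e_2=10N_1$, $e_3=\alpha$ with $N_1,N_2,N_3$ chosen so $M$ is proper gives $d=3$, $d'=2$, $\rk\Lambda=3$.) Second, the parenthetical ``rebalancing the sides $N_j$ by passing to a suitable sub-progression, which costs at most a factor $\exp(O(d))$'' is simply not true: with $N_1=N^{100}$ and $N_2=\cdots=N_d=1$ there is no balanced sub-progression of size within $\exp(O(d))$ of $|M|$.

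The desired inequality is plausibly correct, because the extra lattice directions coming from $\ker(\phi)\times\ker(\phi)$ are exactly the ones that properness forces to be long (no nonzero point of $\ker\phi$ lies in $\prod[-2N_j,2N_j]$, hence at most $4^d$ lie in $\prod[-4N_j,4N_j]$), so they contribute only an $\exp(O(d))$ factor to the point count. But turning that into a complete successive-minima argument with arbitrary side lengths $N_j$ is precisely the content that is missing here, and it is not just bookkeeping: one has to simultaneously exploit properness to kill the $\ker\phi\times\ker\phi$ directions and transcendence to kill the $E\cap\alpha E$ directions, with no control on the covolume of the remaining piece of $\Lambda$. As written, the key lemma is asserted rather than proved, and two of the explicit claims used to justify it are false.
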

What is particularly interesting here is that there is a simple construction which shows that there are arbitrarily large sets $A$ with $|A+\alpha.A| = \exp(O(\sqrt{\log |A|}))|A|$.

\section*{Acknowledgement}

The author should like to thank Julia Wolf for useful discussions surrounding the $U^3(\F_2^n)$-inverse theorem, and an anonymous referee for a thorough reading of the paper and numerous useful suggestions.

\appendix

\section{Proof of Theorem \ref{thm.bogexp}}

Our objective in this appendix is to prove the following result.
\begin{theorem}\label{thm.here}
Suppose that $G:=\F_2^n$, and $A \subset G$ has density $\alpha>0$.  Then there is a subspace $V \leq G$ with $\cod V=O(\log^42\alpha^{-1})$ such that $V \subset 4A$.
\end{theorem}
We have distilled this argument out because it is short and just uses the two ingredients of the Croot-Sisask lemma and Chang's theorem.  For the reader interested in a little more motivation the sketch after the introduction may be of more interest.

In the rather special setting of $\F_2^n$ it is known from work of Green and Ruzsa \cite[Proposition 6.1]{greruz::0} that if $|A+A| \leq K|A|$ then $A$ is Fre{\u\i}man $8$-isomorphic to a set $A'$ of density $K^{-O(1)}$ in some $\F_2^m$, from which we get the following corollary of Theorem \ref{thm.here}.
\begin{corollary}
Suppose that $G:=\F_2^n$, and $A \subset G$ has $|A+A| \leq K|A|$.  Then there is a subspace $V \leq G$ with $|V| \geq \exp(-O(\log^42K))|A|$ such that $V \subset 4A$.
\end{corollary}

In this setting the Croot-Sisask lemma is the following.
\begin{lemma}[Croot-Sisask]  Suppose that $G:=\F_2^n$, $f \in L^p(G)$ and $A \subset G$ has density $\alpha>0$.  Then there is an $a \in A$ and a set $T$ with $\mu_G(T) \geq (\alpha/2)^{O(\epsilon^{-2}p)}$ such that
\begin{equation*}
\|\rho_t(f \ast \mu_A) - f \ast \mu_A\|_{L^p(G)} \leq \epsilon \|f\|_{L^p(G)} \textrm{ for all } t \in T.
\end{equation*}
\end{lemma}
Additionally Chang's theorem is as follows.
\begin{lemma}[Chang's theorem]  Suppose that $G:=\F_2^n$ and $A \subset G$ has density $\alpha>0$.  Then
\begin{equation*}
\cod \Spec_\epsilon(\mu_A)^\perp = O(\epsilon^{-2}\log 2\alpha^{-1}).
\end{equation*}
\end{lemma}
\begin{proof}[Proof of Theorem \ref{thm.here}]
We begin by noting that
\begin{equation}\label{eqn.earlier}
\langle 1_{2A}\ast 1_{A},1_{A} \rangle= \langle 1_{2A},1_A \ast 1_A \rangle= \alpha^2.
\end{equation}
By the Croot-Sisask lemma applied with $f:=1_{2A}$ we get a set $T \subset G$ with $\mu_G(T) \geq (\alpha/2)^{O(k^2p)}$ such that
\begin{equation*}
\|\rho_t(1_{2A} \ast 1_{A}) - 1_{2A} \ast 1_{A}\|_{L^p(G)} \leq \alpha/4ke \textrm{ for all }t \in T.
\end{equation*}
By the triangle inequality this gives
\begin{equation*}
\| \rho_t(1_{2A} \ast 1_A) - 1_{2A}\ast 1_A\|_{L^p(G)} \leq \alpha/4e \textrm{ for all } t \in kT,
\end{equation*}
and so on integrating (and applying the triangle inequality again) we have
\begin{equation*}
\| 1_{2A} \ast 1_A\ast \mu_T ^{(k)}- 1_{2A}\ast 1_A\|_{L^p(G)} \leq \alpha/4e.
\end{equation*}
By H{\"o}lder's inequality we get that
\begin{equation*}
|\langle 1_{2A} \ast 1_A\ast \mu_T^{(k)} ,1_{A} \rangle - \langle 1_{2A} \ast 1_A, 1_{A}\rangle| \leq \alpha\alpha^{1+1/(p-1)}/4e.
\end{equation*}
Choosing $p=1+\log \alpha^{-1}$ and inserting (\ref{eqn.earlier}) we have that
\begin{equation*}
|\langle 1_{2A} \ast 1_A\ast \mu_T^{(k)} ,1_{A} \rangle -\alpha^2| \leq \alpha^2/4,
\end{equation*}
and so by the triangle inequality
\begin{equation*}
\langle 1_{2A} \ast 1_A \ast \mu_T ^{(k)},1_{A}\rangle_{L^p(G)} \geq 3\alpha^2/4.
\end{equation*}
Now, put $V:=\Spec_{1/2}(\mu_T)^\perp$ and $g:=1_{2A} \ast 1_A \ast \mu_T ^{(k)}$, so that
\begin{eqnarray*}
\left|\langle g ,1_{A}\rangle-\langle g \ast \mu_V,1_{A}\rangle\right|= 
\left|\sum_{\gamma \not \in V^\perp }{\wh{1_{2A}}(\gamma)|\wh{1_A}(\gamma)|^2\wh{\mu_T}(\gamma)^k}\right|\leq \alpha 2^{-k} \leq \alpha^2/8,
\end{eqnarray*}
by Parseval's theorem, the definition of $V$ and by taking $k=O(\log2\alpha^{-1})$ a sufficiently large natural.  It follows by the triangle inequality that
\begin{equation*}
\langle 1_{2A} \ast 1_A \ast \mu_T^{(k)}\ast \mu_V ,1_{A}\rangle > \alpha^2/2,
\end{equation*}
and so, by averaging, that $\|1_{2A} \ast \mu_V\|_{L^\infty(G)}>1/2$.  We conclude that $4A$ contains $V$ by the pigeon-hole principle and the result is proved on applying Chang's theorem to see that
\begin{equation*}
\cod V = O(\log 2\mu_G(T)^{-1})=O(\log^42\alpha^{-1}).
\end{equation*}
\end{proof}

\bibliographystyle{halpha}

\bibliography{references}

\end{document}